\documentclass[a4paper,twoside,draft]{article}

\author{Yves Lafont\thanks{Universit\'e de la M\'editerran\'ee (Aix-Marseille 2) - Institut de Math\'ematiques de Luminy (UMR 6206 du CNRS)},\  Fran\c cois M\'etayer\thanks{Universit\'e Denis Diderot (UMR 7126 du CNRS)  - Laboratoire Preuves, Programmes et Syst\`emes} \&\  Krzysztof Worytkiewicz\thanks{Akademia G\'orniczo-Hutnicza Krak\'ow - Katedra Informatyki}}

\usepackage[T1]{fontenc}
\usepackage{times,amssymb, amsmath, rotating}
\usepackage[all,2cell,frame]{xy}
\UseAllTwocells

\usepackage{enumerate} 

\setlength \textwidth {16cm}
\setlength \textheight {24cm}
\setlength \topmargin {-2cm}
\setlength \oddsidemargin {0cm}
\setlength \evensidemargin {0cm}
\setlength \parindent {0cm}

\newcommand \ms \medskip
\newcommand \sssec {\subsubsection*{}}


\usepackage[amsmath,thmmarks]{ntheorem}

\theoremstyle{plain} 
\theoremheaderfont{\normalfont\bfseries}
\theorembodyfont{\slshape} 
\theoremseparator{.}
\theoremsymbol{} 
\newtheorem{theorem}{Theorem} 
\newtheorem{proposition}{Proposition} 
\newtheorem{lemma}{Lemma}
\newtheorem{corollary}{Corollary}

\theoremstyle{plain} 
\theoremheaderfont{\normalfont\bfseries}
\theorembodyfont{\slshape} 
\theoremseparator{.}
\theoremsymbol{}
\newtheorem{defn}{Definition}

\theoremstyle{plain} 
\theoremheaderfont{\slshape}
\theorembodyfont{\upshape} 
\theoremseparator{.}
\theoremsymbol{\ensuremath{\lozenge}}
\newtheorem{rem}{Remark}

\theoremheaderfont{\slshape}
\theorembodyfont{\upshape} 
\theoremstyle{nonumberplain} 
\theoremseparator{.} 
\theoremsymbol{\ensuremath{\triangleleft}} 
\newtheorem{proof}{Proof} 

\theoremheaderfont{\slshape}
\theorembodyfont{\upshape} 
\theoremstyle{nonumberplain} 
\theoremseparator{\hspace{-.5ex}} 
\theoremsymbol{\ensuremath{\triangleleft}} 
\newtheorem{proofthm}{Proof of Theorem}

\theoremstyle{empty} 

\theoremsymbol{} 
\theoremseparator{} 
\theorembodyfont{\slshape}
\theoremprework{\begin{quote}} 
\theorempostwork{\end{quote}\medskip} 
\newtheorem{stmt}{}







\newcommand{\dueto}[1]{\textup{\textbf{(#1) }}}

\newcommand{\tmem}[1]{{\em #1\/}}

\newcommand{\tmop}[1]{\ensuremath{\operatorname{#1}}}

\newcommand{\tmtextbf}[1]{{\bfseries{#1}}}
\newcommand{\tmtextit}[1]{{\itshape{#1}}}
\newcommand{\upl}{+}

\newenvironment{enumerateroman}{\begin{enumerate}[i.] }{\end{enumerate}}

\newenvironment{itemizeminus}
{\begin{itemize}

}
{\end{itemize}}



\newcommand \NN {\mathbb N} 
\newcommand \id {\mathrm{id}} 

\newcommand \setbis[2] {\{#1\; |\; #2\}} 

\newcommand \ctg[1] {\hbox{\bf #1}} 
\newcommand \morph[1] {#1^{\arrowcat}} 
\newcommand \arrowcat {(\cdot\to\cdot)} 
\newcommand \cat {\ctg{Cat}} 
\newcommand \glob {\ctg{O}} 
\newcommand \globset  {\ctg{Glob}}  
\newcommand \nglobset[1]{#1\globset}  
\newcommand \sets {\ctg{Sets}} 
\newcommand \ncat[1] {#1{\cat}} 
\newcommand \ncatpl[1] {#1{\cat}^{+}} 
\newcommand \ocat {\ncat{\omega}} 
\newcommand \twocat {\ncat{2}} 
\newcommand \pol {\ctg{Pol}} 

\newcommand \opp[1] {{#1}^{\mathrm{op}}} 


\newcommand \II {F} 
\newcommand \SH {G} 
\newcommand \TT {U} 
\newcommand \TTT {V} 
\newcommand \GG {T} 

\newcommand \HH[2] {\mathrm H_{#1}(#2)} 
\newcommand \free[1] {#1^{\ast}} 




\newcommand \I {I} 
\newcommand \J {J}

\newcommand \Tfb {\I{-}\mathrm{inj}} 
\newcommand \Cof {\I{-}\mathrm{cof}} 
\newcommand \Weq {\mathcal W} 
\newcommand \Imm {\mathcal Z} 
\newcommand \Icof[1]{#1{-}\mathrm{cof}} 
\newcommand \inj[1]{#1{-}\mathrm{inj}} 
\newcommand \Ifib[1]{#1{-}\mathrm{inj}} 
\newcommand \Icell[1]{#1{-}\mathrm{cell}} 



\newcommand \llp[1] {^{\pitchfork} \! #1} 
\newcommand \rlp[1] { #1 \,^{\pitchfork}} 


\newcommand \Morph[1] {{#1}^{\rightarrow}} 
\newcommand \dom {\tmop{dom}}  
\newcommand \cod {\tmop{cod}}  


\newcommand \para \parallel
\newcommand \To[1] {\to_{#1}}
\newcommand \Comp[1] {\ast_{#1}}
\newcommand \unit[1] {1_{#1}}
\newcommand \Unit[2] {1^{#1}_{#2}}

\newcommand \cons {\mathrel{\triangleright}} 
\newcommand \Cons[1] {\cons_{#1}} 
\newcommand \Sce[1] {#1^\flat} 
\newcommand \Tge[1] {#1^\sharp} 
\newcommand \sce[1] {\sigma_{#1}} 
\newcommand \tge[1] {\tau_{#1}} 
\newcommand \SCE[2] {\sce{#1,#2}} 
\newcommand \TGE[2] {\tge{#1,#2}} 
\newcommand \cosce[1] {{\mathrm s}_{#1}} 
\newcommand \cotge[1] {{\mathrm t}_{#1}} 

\newcommand \Hom[3] {#1(#2,#3)} 
\newcommand \HOM[2] {[#1,#2]} 
\newcommand \act \cdot 
\newcommand \COMP \circledast 
\newcommand \Sht[1] {{\left [#1 \right ]}} 

\newcommand \eqv \sim 
\newcommand \rto {\stackrel{\sim}\to}
\newcommand \inv \overline

\newcommand \cto \curvearrowright 
\newcommand \Cto[3] {#1 \; | \; #2 \cto #3}
\newcommand \Pal[1] {#1^\natural} 
\newcommand \comp \ast 
\newcommand \Top {\pi^1}
\newcommand \Bot {\pi^2} 
\newcommand \Triv \tau 

\newcommand \cnx \Gamma 
\newcommand \Cnx[1] {\cnx(#1)}

\newcommand \Glu[1] {\Pi(#1)}
\newcommand \lft \hat 
\newcommand \rht \tilde

\newcommand \SNG {\boldsymbol 1}
\newcommand \ZERO {\boldsymbol 0}
\newcommand \OO[1] {\mathbf O^{#1}} 
\newcommand \DO {\partial \OO} 
\newcommand \PP[1] {\mathbf P^{#1}} 
\newcommand \sng[1] {\langle #1 \rangle} 
\newcommand \pair[2] {\langle #1, #2 \rangle}
\newcommand \ii[1] {\mathbf i_{#1}}
\newcommand \jj[1] {\mathbf j_{#1}}
\newcommand \jjj[1] {\mathbf j'_{#1}}
\newcommand \kk[1] {\mathbf k_{#1}}
\newcommand \oo[1] {\mathbf o_{#1}}
\newcommand \pp[1] {\mathbf p_{#1}}

\newcommand \uar {\ar@/^1pc/}
\newcommand \dar {\ar@/_1pc/}
\newcommand \Uar {\ar@/^2pc/}
\newcommand \Dar {\ar@/_2pc/}
\newcommand \ear {\ar@{-->}}
\newcommand \edar {\ar@{-->}@/_1pc/}

\newcommand \OT \leftleftarrows 

\newcommand{\doubl}[2]{\ar@<2pt>[l]^{#2}\ar@<-2pt>[l]_{#1}}

\newcommand{\doubr}[2]{\ar@<2pt>[r]^{#1}\ar@<-2pt>[r]_{#2}}
\newcommand{\doubld}[2]{\ar@<2pt>[ld]^{#2}\ar@<-2pt>[ld]_{#1}}
\newcommand{\doubd}[2]{\ar@<2pt>[d]^{#2}\ar@<-2pt>[d]_{#1}}
\newcommand{\Doubld}{\doubld{}{}}

\newcommand \condi[1]{\textbf{#1}\ }
\newcommand \mone{\condi{(M1)}}
\newcommand \mtwo{\condi{(M2)}}
\newcommand \mthree{\condi{(M3)}}
\newcommand \mfour{\condi{(M4)}}
\newcommand \cone{\condi{(C1)}}
\newcommand \ctwo{\condi{(C2)}}
\newcommand \cthree{\condi{(C3)}}
\newcommand \cfour{\condi{(C4)}}
\newcommand \cfive{\condi{(C5)}}
\newcommand \sone{\condi{(S1)}}
\newcommand \stwo{\condi{(S2)}}
\newcommand \sthree{\condi{(S3)}}
\newcommand \sfour{\condi{(S4)}}
\newcommand \zone{\condi{(Z1)}}
\newcommand \ztwo{\condi{(Z2)}}
\newcommand \zthree{\condi{(Z3)}}
\newcommand \zthreep{\condi{(Z3')}}

\newcommand \abso[1]{\left |{#1}\right |}

\newcommand \weq{$\omega$-weak equivalence}
\newcommand \weqs{$\omega$-weak equivalences}

\newcommand \deq{\overset{\tmop{def} .}{=}}

\title{A folk model structure on omega-cat}

\begin{document}

\maketitle

\begin{abstract}

 The primary aim of this work is an intrinsic homotopy
 theory of strict $\omega$-categories. We
 establish a model structure on $\ocat$, the 
category of strict $\omega$-categories. The constructions leading to
the model structure in question are
expressed entirely within the scope of $\ocat$, building on a
set of generating cofibrations and a class of weak equivalences as
basic items. All object are fibrant while free objects
are cofibrant. We further exhibit model structures of this type on
$n$-categories for arbitrary $n \in \NN$, as specialisations of the
$\omega$-categorical one along right adjoints. In
particular, known cases for $n=1$ and $n=2$ nicely fit into the scheme.
\end{abstract}

\section{Introduction}\label{sec:intro}

\subsection{Background and motivations}\label{subsec:backgd}
The origin of the present work goes back to the following result~\cite{anick:homasa,squier:worphf}:
\begin{stmt}
  if a monoid $M$ can be presented by a finite, confluent and terminating
  rewriting system, then its third homology group $\HH 3M$ is of finite type.
\end{stmt}
The finiteness property extends in fact to all dimensions~\cite{kobayashi:comrsh}, but the above theorem may also be refined in another direction: the same hypothesis implies that $M$ has {\em finite derivation type}~\cite{squieral:fincrs}, a property of homotopical nature.

We claim that these ideas are better expressed in terms of $\omega$-categories (see~\cite{guiraud:trdimp,guiraud:twoppp,lafont:alggrw}). Thus we work in the category $\ocat$, whose objects are the strict $\omega$-categories and the morphisms are $\omega$-functors (see Section~\ref{sec:omegacat}). In fact, when considering the interplay between the monoid itself and the space of computations attached to any presentation of it, one readily observes that both objects support a structure of $\omega$-category in a very direct way: this was the starting point of~\cite{metayer:respol}, which introduces a notion of {\em resolution} for $\omega$-categories, based on computads~\cite{street:limicf,power:ncatpt} or polygraphs~\cite{burroni:highdw}, the terminology we adopt here. Recall that a polygraph $S$ consists of sets of cells of all dimensions, determining a freely generated $\omega$-category $\free S$. A resolution of an $\omega$-category $C$ by a polygraph $S$ is then an $\omega$-functor $p:\free S\to C$ satisfying a certain lifting property (see Section~\ref{sec:cofibrant} below); \cite{metayer:respol} also defines a homotopy relation between $\omega$-functors and shows that any two resolutions of the same $\omega$-category are homotopically equivalent in this sense.

This immediately suggests looking for a homotopy theory on $\ocat$ in
which the above resolutions become trivial fibrations: the model
structure we describe here does exactly that. Notice, in addition,
that polygraphs turn out to be the cofibrant objects
(see~\cite{metayer:cofohc} and Section~\ref{sec:cofibrant} below). On
the other hand, our model structure generalizes in a very precise
sense the ``folk'' model structure on $\cat$
(see~\cite{joyaltierney:strscs}) as well a model structure on $\ncat
2$ in a similar spirit (see~\cite{lack:quitwo,lack:quibic}). Incidentally, there is also a
quite different, Thomason-like, model structure on $\ncat
2$ (see~\cite{worytkiewiczetal:modstc}). Its generalisation to $\ocat$
remains an open problem.

Since~\cite{quillen:homalg}, the notion of model structure has been gradually recognized as the appropriate abstract framework for developing homotopy theory in a category $\ctg{C}$: it consists in three classes of morphisms, {\em weak equivalences}, {\em fibrations}, and {\em cofibrations}, subject to axioms whose exact formulation has somewhat evolved in time. In practice, most model structures are {\em cofibrantly generated}. This means that there are sets $\I$ of {\em generating cofibrations} and $\J$ of {\em generating trivial cofibrations} which determine all the cofibrations and all the fibrations by lifting properties. 

Recall that, given a set $\I$ of morphisms, {\em  $\I$-injectives} are the morphisms which have the {\em right lifting property} with respect to $\I$. They build a class denoted by $\Tfb$.  Likewise, {\em $\I$-cofibrations} are the morphisms having the {\em left lifting property } with respect to $\Tfb$ (see Section~\ref{subsec:smallobject}). The class of $\I$-cofibrations is denoted by $\Cof$.  Now, our construction is based on a theorem by J.Smith (see~\cite{beke:shhomc}):
under some fairly standard assumptions on the underlying category, conditions
\begin{stmt}
  \begin{description}
    \item[(S1)] $\Weq$ has the $3$ for $2$ property and is stable under
    retracts;
    \item[(S2)] $\Ifib{I} \subseteq \Weq$;
    \item[(S3)] $\Icof{I} \cap \Weq$ is closed under
    pushouts and transfinite compositions;
    \item[(S4)] $\Weq$ admits a solution set $J \subseteq \Icof{I}\cap \Weq$ at $I$.
  \end{description}
\end{stmt}
are sufficient to obtain a model structure in which $\Weq$, $\I$ and
$\J$ are  the  weak equivalences, the generating cofibrations and the
generating trivial cofibrations, respectively.

\subsection{Organization of the paper}\label{subsec:organize}
Section~\ref{sec:combi}  reviews {\em combinatorial model categories}, with special emphasis on our version of Smith's theorem (Section~\ref{subsec:solset}), while Section~\ref{sec:omegacat} recalls the basic definitions of globular sets and $\omega$-categories, and sets the notations. 

Section~\ref{sec:folkmodel} is the core of the paper, that is the
derivation of our model structure by means of a set $\I$ of generating
cofibrations and a class $\Weq$ of weak equivalences, satisfying conditions~\sone to \sfour.

\subsubsection{Sketch of the main argument}\label{subsubsec:sketch}
We first define the set $\I$ of generating cofibrations, and establish
closure properties we shall use later in the proof of
condition~\sthree.

We then define the class $\Weq$ of \weqs, which are at this stage our candidates for the r\^ole
of weak equivalences~(Section~\ref{subsec:weq}). For this purpose, we first need a
notion of $\omega$-equivalence between parallel
cells~(Section~\ref{subsec:equivalence}), together with crucial
properties of this notion.

We then prove condition~\stwo, and part of~\sone (Section~\ref{subsec:weq}), as well as additional closure properties contributing to~\sthree.

At this stage, just one point of~\sone remains unproved, namely the assertion
\begin{stmt}
  if $f:X\to Y$ and $g\circ f:X\to Z$ belong to $\Weq$, then so does $g:Y\to Z$.
\end{stmt}

This requires an entirely new construction: we define an endofunctor $\cnx$ of $\ocat$, which to each $\omega$-category $X$ associates an $\omega$-category $\Cnx X$ of {\em reversible cylinders} in $X$. Section~\ref{subsec:connect} summarizes the main features of $\cnx$, whereas the more technical proofs are given in Appendix~\ref{annex:connect}. This eventually leads to an alternative characterization of weak equivalences and to a complete proof of~\sone.

As for condition~\sthree, the difficult point is to prove the closure of $\Cof\cap\Weq$ by pushout, which does not follow from the previously established properties. The main obstacle is that $\Weq$ itself is definitely {\em not} closed by pushout. What we need instead is a new class $\Imm$ of {\em immersions} such that:
\begin{enumerateroman}
\item $\Imm$ is closed by pushout;
\item $\Cof\cap\Weq \subseteq \Imm\subseteq \Weq$,
\end{enumerateroman}
which completes the proof of~\sthree. Immersions are defined in Section~\ref{subsec:immersions}, by using again the functor $\cnx$ in an essential way.

Section~\ref{subsec:generic} is devoted to the proof of the {\em solution set condition}~\sfour. Precisely, we have to build, for each $i\in\I$, a {\em set} $\J_i$ of $\omega$-functors satisfying the following property: for each commutative square
\begin{equation}
  \begin{xy}
    \xymatrix{X\ar[r]\ar[d]_i & Z\ar[d]^f \\
              Y\ar[r] & T}
  \end{xy}
\label{eq:solset}
\end{equation}
 where $i\in\I$ and $f\in\Weq$, there is a $j\in\J_i$ such that~(\ref{eq:solset}) factors through $j$:
\begin{equation}
  \begin{xy}
    \xymatrix{X\ar[r]\ar[d]_i & U\ar[d]_j\ar[r] & Z\ar[d]^f \\
              Y\ar[r] &V\ar[r] & T}
  \end{xy}
\label{eq:solsetfac}
\end{equation}
The whole solution set is then $J \; \deq \; \bigcup_{i\in \I}\J_i$. It turns out that in our case, the sets $\J_i$ are just singletons.

\subsubsection{Additional properties}\label{subsubsec:additional}
The end of the paper is devoted to two additional points: Section~\ref{sec:cofibrant} gives a characterization of cofibrant objects as polygraphs by interpreting the results of~\cite{metayer:cofohc} in terms of our model structure. Finally, Section~\ref{sec:ncat} shows how the present model structure on $\ocat$ transfers to $\ncat n$ for any integer $n$: in particular, for $n=1$ and $n=2$, we recover the abovementioned structures on $\cat$~\cite{joyaltierney:strscs} and $\ncat 2$~\cite{lack:quitwo,lack:quibic}.

\section{Combinatorial model categories}\label{sec:combi}

We recall some facts about model categories with locally-presentable
underlying categories.

\subsection{Locally presentable categories}\label{subsec:locpres}

Let $\alpha$ be a regular cardinal. An $\alpha$-filtered category
$\ctg{F}$ is a category such that

\begin{enumerateroman}

  \item for any set of objects $S$ with cardinality $|S| < \alpha$
 and for each $A
  \in S$ there is an object $T$ and a morphism $f_A: A \rightarrow T$;
  
  \item for any
    set of morphisms $M \subseteq \text{\tmtextbf{F}}(A,B)$ with cardinality
  $|M| < \alpha$ there is an object $C$ and a morphism $m : B
  \rightarrow C$, 
  such that $m \circ m' = m \circ m''$ for all $m',m'' \in M$.
\end{enumerateroman}

We say that $\ctg{F}$ is filtered in case $\alpha = \aleph_0$. In
particular, a {\tmem{directed}} (partially ordered) set is a filtered
category.

Recall that an $\alpha$-filtered colimit is a colimit of a functor $D :
\text{\tmtextbf{I}} \rightarrow \ctg{C}$ from a small
$\alpha$-filtered category $\ctg{I}$. Let $\ctg{C}$ be
a category. An object $X \in \ctg{C}$ is $\alpha$-presentable if
the covariant representable functor $\ctg{C} (X, -) :
\ctg{C} \rightarrow \sets$ preserves
$\alpha$-filtered colimits. This boils down to the fact that a morphism from
$X$ to an $\alpha$-filtered colimit factors through some object of the
relevant $\alpha$-filtered diagram, in an essentially unique way. If $X$ is $\alpha$-presentable, and $\beta$ is a regular cardinal such that $\alpha<\beta$, then $X$ is also $\beta$-presentable.

We say that an object $X \in
\ctg{C}$ is {\tmem{presentable}} if there is a regular cardinal
witnessing this fact. If it is the case, the smallest such cardinal, $\pi
(X)$, is called $X$'s {\tmem{presentation rank}}.

\begin{defn}
  \label{def:loc-pres}Let $\alpha$ be a regular cardinal. A cocomplete
  category $\ctg{C}$ is {\em locally $\alpha$-presentable} if there is a
  family $G = (G_i)_{i \in I}$ of objects such that every object of
  $\ctg{C}$ is an $\alpha$-filtered colimit of a diagram in the
  full subcategory spanned by the $G_i$'s. We say that a cocomplete
  category is {\em locally finitely presentable} if it is locally
  $\aleph_0$-presentable. Finally, we say that a cocomplete
  category is {\em locally presentable} if there is a regular
  cardinal witnessing this fact.
\end{defn}

Definition \ref{def:loc-pres} is equivalent to the original one by Gabriel and
Ulmer {\cite{gabrielulmer:lokprc}}. It proves especially powerful to establish factorisation results, when combined with the {\em small object argument} (Section~\ref{subsec:smallobject}).
Let $\beta$ be a regular cardinal. Recall that a $\beta$-colimit is a colimit of a functor $D : \ctg{I} \rightarrow \ctg{C}$ from a small category $\ctg{I}$ such that $\abso{\ctg{I}_1}< \beta$.
\begin{proposition}
  \label{prop:borceux}Let $\beta$ be a regular cardinal. A $\beta$-colimit of
  $\beta$-presentable objects is $\beta$-presentable.
\end{proposition}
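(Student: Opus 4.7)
The plan is to unpack the definitions and assemble a factorization through the $\beta$-filtered diagram step by step, using $\beta$-filteredness to merge many choices into a single one. Let $D: \ctg{I} \to \ctg{C}$ be a diagram with $|\ctg{I}_1| < \beta$ and each $D(i)$ $\beta$-presentable, with colimit $X = \colim_i D(i)$ and cocone $(d_i : D(i) \to X)$. Let $E: \ctg{J} \to \ctg{C}$ be any $\beta$-filtered diagram with colimit $Y$ and cocone $(\eta_j : E(j) \to Y)$. I want to show that any morphism $f : X \to Y$ factors essentially uniquely through some $E(j)$.

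For existence, set $f_i = f \circ d_i : D(i) \to Y$. Since each $D(i)$ is $\beta$-presentable, each $f_i$ factors as $f_i = \eta_{j_i} \circ g_i$ for some $g_i : D(i) \to E(j_i)$. Because identities give $|\ctg{I}_0| \le |\ctg{I}_1| < \beta$, the $\beta$-filteredness of $\ctg{J}$ supplies an object $k$ and morphisms $\mu_i : j_i \to k$ for every $i$; put $h_i = E(\mu_i) \circ g_i : D(i) \to E(k)$. These $h_i$ need not yet form a cocone over $D$, but for every morphism $\alpha : i \to i'$ in $\ctg{I}$ the two maps $h_{i'} \circ D(\alpha)$ and $h_i$ become equal after post-composition with $\eta_k$. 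Applying $\beta$-presentability of $D(i)$ to the $\beta$-filtered system under $k$, there is a morphism $k \to k_\alpha$ in $\ctg{J}$ that equalizes this pair. Since there are $|\ctg{I}_1| < \beta$ such pairs, $\beta$-filteredness (in its second clause) yields a single object $k'$ and a morphism $\nu : k \to k'$ that equalizes all of them simultaneously. The family $E(\nu) \circ h_i$ is then a genuine cocone over $D$, so it factors through $X$ by a unique map $\bar f : X \to E(k')$; tracing through the construction shows $\eta_{k'} \circ \bar f = f$.

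For essential uniqueness, suppose $f$ factors as $\eta_k \circ u = \eta_{k'} \circ u'$ with $u : X \to E(k)$ and $u' : X \to E(k')$. Pick a common upper bound $l$ of $k$ and $k'$ in $\ctg{J}$, and observe that for each $i$ the two maps $E(k \to l) \circ u \circ d_i$ and $E(k' \to l) \circ u' \circ d_i$ from $D(i)$ into $E(l)$ agree in $Y$. Again $\beta$-presentability of each $D(i)$ together with $\beta$-filteredness applied to the $|\ctg{I}_0| < \beta$ pairs produces a single further morphism $l \to l^\ast$ equalizing them all, which shows that the two factorizations agree over $l^\ast$.

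The content of the argument is entirely bookkeeping; the only thing one has to be careful about is that \emph{every} use of filteredness consumes fewer than $\beta$ data, which is why the hypothesis $|\ctg{I}_1| < \beta$ (and hence $|\ctg{I}_0| < \beta$) is exactly what is needed. The one point to watch is not to appeal to filteredness in $\ctg{I}$, which is not assumed: the cocone property is recovered by equalizing one pair per morphism of $\ctg{I}$ inside $\ctg{J}$, which is precisely where the combined cardinality bound becomes essential.
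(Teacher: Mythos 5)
The paper itself gives no proof of Proposition~\ref{prop:borceux}: it is quoted as a standard fact from the theory of locally presentable categories (it amounts to the commutation of $\beta$-filtered colimits with $\beta$-small limits in $\sets$, and appears as a basic result in Ad\'amek and Rosick\'y's book). Your argument is the standard element-by-element proof of exactly that fact, and it is essentially correct; the two bookkeeping points you isolate ($|\ctg{I}_0|\leq|\ctg{I}_1|<\beta$ via identities, and one equalization per morphism of $\ctg{I}$) are indeed precisely where the hypothesis $|\ctg{I}_1|<\beta$ is consumed. One step is compressed in a way worth tightening. When you have, for each $\alpha$, a morphism $\rho_\alpha:k\to k_\alpha$ equalizing the $\alpha$-th pair, the second clause of the paper's definition of $\beta$-filteredness applies only to a set of \emph{parallel} morphisms $A\to B$, whereas your $\rho_\alpha$ have varying codomains; so it does not directly ``yield a single $\nu:k\to k'$ equalizing all of them.'' You must first use the first clause to map all the $k_\alpha$ into a common object $l$, giving $\sigma_\alpha:k_\alpha\to l$ (and also $\sigma:k\to l$), then apply the second clause to the fewer-than-$\beta$ parallel morphisms $\sigma$ and $\sigma_\alpha\circ\rho_\alpha$ from $k$ to $l$ to obtain $m:l\to l'$ identifying them all. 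The observation that actually closes the argument --- and that you do not state --- is that $\nu=m\circ\sigma$ then \emph{factors through each} $\rho_\alpha$, namely as $(m\circ\sigma_\alpha)\circ\rho_\alpha$, and therefore inherits each equalizing property. The same remark applies to the merging step in your uniqueness argument. With that sentence added, the proof is complete.
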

\begin{rem}
  \label{rem:loc-pres}Let $\alpha$ be a regular cardinal and
  $\ctg{C}$ be a locally $\alpha$-presentable category. By
  definition of local presentability, every object $X \in \ctg{C}$
  is an $\alpha$-filtered colimit of a diagram of $\alpha$-presentable
  objects, so it is a $\beta$-colimit for a regular cardinal $\beta$ such that
  $\alpha \leqslant \beta \leqslant {\abso{\ctg{C}_1}}^+$ . Thus, by
  virtue of Proposition~\ref{prop:borceux}, every object of $\ctg{C}$ is presentable (with a presentation rank possibly exceeding $\alpha$).
\end{rem}

\subsection{Small objects for free}\label{subsec:smallobject}

Let $\ctg{C}$ be a category. Recall that its {\tmem{category of
morphisms}} $\Morph{\ctg{C}}$ is defined as the functor
category $\morph{\ctg{C}}$, where $\arrowcat$
is the category generated by the one-arrow graph. Let $f : X \rightarrow Y$
and $g : Z \rightarrow T$ be morphisms in $\ctg{C}$. We say that
$f$ has the {\tmem{left-lifting}} property with respect to
$g$, or equivalently that $g$ has the {\tmem{right lifting}} property with respect to $f$, if every commuting square $(u, v) \in \Hom{\Morph{\ctg{C}}}{f}{g}$ admits a {\tmem{lift}},  that is a morphism $h : Y \rightarrow T$ making the following diagram commutative
\begin{center}
  $\xymatrix{
X \ar[r]^u \ar[d]_f & 
Z \ar[d]^g
\\
Y \ar@{.>}[ur]^h \ar[r]_v & T
}$
\end{center}
This relation is denoted by $f \pitchfork g$.

For any class of morphisms $\mathcal{A}$, we define
\begin{eqnarray*}
  \llp{\mathcal{A}} & \deq  & \setbis{f}{ f\pitchfork g, g \in \mathcal{A}}\\
  \rlp{\mathcal{A}} & \deq & \setbis{g}{f\pitchfork g, f \in \mathcal{A}}
\end{eqnarray*}
\begin{proposition}
  \label{prop:retracts}Suppose $f = f'' \circ f'$. Then
  \begin{itemizeminus}
    \item if $f' \pitchfork f$ then $f$ is a retract of $f''$;
    
    \item if $f \pitchfork f''$ then $f$ is a retract of $f'$.
  \end{itemizeminus}
\end{proposition}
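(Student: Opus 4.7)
The plan is to apply the lifting hypothesis in each item to a commuting square that has an identity morphism on one edge; the resulting diagonal filler then becomes one leg of a retraction. This is the standard \emph{retract argument} and amounts to a purely formal diagram chase, symmetric between the two items.

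For the first item, write $f : X \to Y$, $f' : X \to Z$ and $f'' : Z \to Y$. The outer square
\begin{center}
$\xymatrix{X \ar[r]^{\id_X} \ar[d]_{f'} & X \ar[d]^f \\ Z \ar[r]_{f''} & Y}$
\end{center}
commutes because $f'' \circ f' = f$. The hypothesis $f' \pitchfork f$ then produces a morphism $h : Z \to X$ satisfying $h \circ f' = \id_X$ and $f \circ h = f''$. I would package $f'$ and $h$ as the two horizontal rows of
\begin{center}
$\xymatrix{X \ar[r]^{f'} \ar[d]_f & Z \ar[r]^{h} \ar[d]_{f''} & X \ar[d]^f \\ Y \ar@{=}[r] & Y \ar@{=}[r] & Y}$
\end{center}
whose squares all commute by construction and whose horizontal composites are $\id_X$ and $\id_Y$; this exhibits $f$ as a retract of $f''$ in $\Morph{\ctg{C}}$.

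The second item is strictly dual. I would apply $f \pitchfork f''$ to the square
\begin{center}
$\xymatrix{X \ar[r]^{f'} \ar[d]_{f} & Z \ar[d]^{f''} \\ Y \ar[r]_{\id_Y} & Y}$
\end{center}
to obtain $h : Y \to Z$ with $h \circ f = f'$ and $f'' \circ h = \id_Y$, and display the resulting retraction as
\begin{center}
$\xymatrix{X \ar@{=}[r] \ar[d]_f & X \ar@{=}[r] \ar[d]_{f'} & X \ar[d]^f \\ Y \ar[r]_{h} & Z \ar[r]_{f''} & Y}$
\end{center}

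There is no genuine obstacle: the argument is textbook. The only small piece of bookkeeping is to choose, in each case, the square so that the identity edge sits \emph{opposite} to the factorisation edge $f'$ or $f''$ one wishes to split, so that $h$ plays the correct role—retracting $f'$ from the left in the first case, sectioning $f''$ from the right in the second.
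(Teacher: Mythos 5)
Your proof is correct and is precisely the standard retract argument that the paper invokes by name without writing out: in each case you apply the lifting hypothesis to the commuting square with an identity edge, and the resulting filler $h$ supplies the section exhibiting $f$ as a retract of $f''$ (resp.\ $f'$) in $\Morph{\ctg{C}}$. All the squares in your retraction diagrams commute as claimed, so nothing is missing.
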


Proposition~\ref{prop:retracts} is known as ``the retract argument''.

Let $\dom : \Morph{\ctg{C}} \rightarrow \ctg{C}$
and $\cod : \Morph{\ctg{C}} \rightarrow
\ctg{C}$ be the obvious functors picking the domain and the
codomain of a morphism, respectively. A {\tmem{functorial factorisation}} in
$\ctg{C} $ is a triple
\[ F = (F, \lambda, \rho) \]
where $F : \Morph{\ctg{C}} \rightarrow \ctg{C}$
is a functor while $\lambda : \tmop{dom} \rightarrow F$ and $\rho : F
\rightarrow \tmop{cod}$ are natural transformations. Let $\mathcal{L}$ and
$\mathcal{R}$ be classes of morphisms in $\ctg{C} .$ We say that
the pair $(\mathcal{L}, \mathcal{R})$ admits a functorial factorisation $(F,
\lambda, \rho)$ provided \ that $\lambda_f \in \mathcal{L}$ and $\rho_f \in
\mathcal{R}$ for all morphisms $f \in \Morph{\ctg{C}}$. If
$(F, \lambda, \rho)$ is clear from the context (or if it does not matter), we
say by abuse of language that $(\mathcal{L}, \mathcal{R})$ {\tmem{is}} a
functorial factorisation.

Let $I$ be a set of morphisms in a cocomplete category $\ctg{C}$
and $I^{\ast}$ be the closure of $I$ under pushout. The class
$I$-cell of {\tmem{relative}} \tmtextit{$I$-cell complexes} is the
closure of $\text{$I^{\ast}$}$ under \ transfinite composition. Let 
$\inj{\I} \deq \rlp{I}$ and $\Icof{I} \deq \llp{(\inj{\I})}$.

\begin{rem} 
  \label{rem:inj-cell-cof} If $I \subseteq I'$, then $\Ifib{I}\supseteq \Ifib{I'}$ and $\Ifib{J} = \Ifib{(\Icof{J})}$. It is easy to see that $\Icell{I}\subseteq \Icof{I}$.
\end{rem}

The next proposition recalls standard formal properties of the classes just defined (see~\cite{gabrielzisman:calfrh}).

\begin{proposition}\label{prop:closure}
$\Tfb$ as well as $\Cof$ contain all identities. $\Tfb$ is closed under composition and pullback while $\Cof$ is closed under retract, transfinite composition and pushout.
\end{proposition}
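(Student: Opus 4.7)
The plan is to treat all these closure properties uniformly, exploiting the fact that both $\Tfb$ and $\Cof$ are defined by lifting conditions — $\Tfb = \rlp{\I}$ by definition, and $\Cof = \llp{\Tfb}$. Consequently every claim reduces to a formal diagram chase: to show a morphism $g$ lies in $\Tfb$ we must produce, for each commuting square $(u,v) : i \to g$ with $i \in \I$, a diagonal filler; to show a morphism $f$ lies in $\Cof$ we must produce a filler for each square $(u,v) : f \to p$ with $p \in \Tfb$. The argument is thus entirely arrow-theoretic and uses nothing specific about $\ocat$.

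For the identity claims I would observe that in any square of the form $(u,v) : i \to \id_X$ commutativity forces $v\circ i = u$, so $h = v$ is a valid diagonal; symmetrically, for a square $(u,v) : \id_X \to p$ commutativity forces $p\circ u = v$, so $h = u$ works. Hence identities lie in both $\llp{\mathcal{A}}$ and $\rlp{\mathcal{A}}$ for any class $\mathcal{A}$. For closure of $\Tfb$ under composition, given $g_1 : X \to Y$ and $g_2 : Y \to Z$ in $\Tfb$, a square $(u,v) : i \to g_2 \circ g_1$ can be filled by first lifting $(u, g_1 \circ \, ?)$ to get a map into $Y$ via $g_2$, then lifting the resulting square against $g_1$; the two fillers combine to the required diagonal. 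For closure under pullback, given a pullback square with top arrow $g'$ and right arrow $g \in \Tfb$, a square $(u,v)$ against $g'$ composes with the pullback projections to give a square against $g$, whose filler factors through the pullback by its universal property.

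For $\Cof$, retract closure is an immediate naturality statement: a square against $p \in \Tfb$ starting from a retract $f$ of $f'$ extends, using the retract data, to a square starting from $f'$; the filler restricts back to a filler for $f$. Pushout closure proceeds dually to the pullback case: given a pushout square with left-hand arrow $f \in \Cof$ and pushed-out arrow $f'$, a square $(u,v) : f' \to p$ pulls back along the pushout injections to a square $f \to p$; its filler together with the other leg of the square induces, via the universal property of the pushout, the desired diagonal for $f'$.

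The step that requires real care is transfinite composition. Given a $\lambda$-sequence $X_0 \to X_1 \to \cdots$ with each $X_\beta \to X_{\beta+1}$ in $\Cof$ and colimit $X_0 \to X_\lambda$, I would build a filler for a square $(u,v) : (X_0 \to X_\lambda) \to p$ by transfinite recursion on $\beta \leq \lambda$: at successor stages use the LLP of $X_\beta \to X_{\beta+1}$ against $p$ to extend the partial diagonal $X_\beta \to Z$; at limit stages assemble the compatible cocone into the required map $X_\beta \to Z$ by the universal property of the colimit. The coherence at limits is the main obstacle, and it is handled by arranging the recursion so that each filler constructed at stage $\beta+1$ restricts to the previously chosen one on $X_\beta$, making the family canonically compatible. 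Taking $\beta = \lambda$ yields the required filler, which completes the proof.
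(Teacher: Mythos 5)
Your argument is correct. The paper offers no proof of this proposition at all---it is stated as a recollection of standard formal properties with a citation to Gabriel and Zisman---and your lifting-property diagram chases for identities, composition, pullback, retract and pushout, together with the transfinite recursion (with the compatibility invariant at limit stages) for transfinite composites, are exactly the standard arguments the authors are implicitly invoking.
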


We may now state the crucial factorisation result we shall need:

 \begin{proposition}\label{prop:smallobject} Suppose that $\ctg{C}$ is locally presentable and let $I$ be a set of morphisms of $\ctg{C}$. Then $(\Icell{I},\Ifib{I})$ is a functorial factorisation.
\end{proposition}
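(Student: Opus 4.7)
The plan is to run Quillen's small object argument, using local presentability to pin down a suitable ordinal of iteration. Since $I$ is a set of morphisms in a locally presentable category, every domain of every $i\in I$ is presentable by Remark~\ref{rem:loc-pres}; fix a regular cardinal $\kappa$ strictly larger than the presentation rank of every such domain (this uses crucially that $I$ is a set).

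Given $f:X\to Y$, I would construct transfinitely a $\kappa$-sequence $X=Z_0\to Z_1\to\cdots\to Z_\alpha\to\cdots$ equipped with compatible morphisms $\rho_\alpha:Z_\alpha\to Y$ satisfying $\rho_0=f$. At a successor stage, let $S_\alpha$ be the set of commuting squares
\[\xymatrix{A\ar[r]^{u}\ar[d]_{i} & Z_\alpha\ar[d]^{\rho_\alpha} \\ B\ar[r]_{v} & Y}\]
with $i:A\to B$ in $I$, and form the pushout
\[\xymatrix{\coprod_{S_\alpha} A\ar[r]\ar[d] & Z_\alpha\ar[d] \\ \coprod_{S_\alpha} B\ar[r] & Z_{\alpha+1}.}\]
The induced morphism $Z_\alpha\to Z_{\alpha+1}$ lies in $I^{\ast}$, and the collection of $v$'s determines, via the universal property of the pushout, a canonical $\rho_{\alpha+1}:Z_{\alpha+1}\to Y$ extending $\rho_\alpha$ and restricting to each $v$ on the corresponding $B$-component. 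At limit stages one passes to colimits, both for the $Z_\alpha$'s and for the $\rho_\alpha$'s (the latter being coherent by construction). Finally set $F(f)=Z_\kappa$, with $\lambda_f:X\to Z_\kappa$ the transfinite composite and $\rho_f:Z_\kappa\to Y$ the induced map.

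By construction $\lambda_f$ is a transfinite composition of morphisms in $I^{\ast}$, hence $\lambda_f\in\Icell{I}$. To see $\rho_f\in\Ifib{I}$, consider any commuting square from some $i:A\to B$ in $I$ to $\rho_f$. Since $A$ is $\kappa$-presentable and the diagram $(Z_\alpha)_{\alpha<\kappa}$ is $\kappa$-filtered (regularity of $\kappa$), the top arrow $A\to Z_\kappa$ factors through some $Z_\alpha$ with $\alpha<\kappa$; this produces an element of $S_\alpha$, and the composite $B\to\coprod_{S_\alpha} B\to Z_{\alpha+1}\to Z_\kappa$ is the desired lift. Commutativity of the lower triangle with $Y$ is forced by the defining property of $\rho_{\alpha+1}$.

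Functoriality of $(F,\lambda,\rho)$ follows from functoriality of coproducts, pushouts and transfinite colimits, once canonical choices have been fixed: a morphism $(f_0,f_1):f\to g$ in $\Morph{\ctg{C}}$ induces, stagewise and compatibly with the maps to the codomains, a map of indexing sets $S_\alpha^f\to S_\alpha^g$ and hence coherent morphisms $Z_\alpha^f\to Z_\alpha^g$, which assemble at level $\kappa$ to the required $F(f)\to F(g)$. I expect the main delicate point to be the presentability step — namely, that $\kappa$ can be chosen uniformly so that the colimit over $\kappa$ actually detects factorizations from every domain in $I$ — since everything else is routine universal-property bookkeeping. A minor secondary concern is that $S_\alpha$ is legitimately a set: this holds because $I$ is a set and the hom-sets $\Hom{\ctg{C}}{A}{Z_\alpha}$ and $\Hom{\ctg{C}}{B}{Y}$ are sets.
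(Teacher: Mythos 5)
Your proposal is correct and is essentially the paper's own proof: the same one-step pushout over the set of squares from $I$ into $f$, the same transfinite iteration up to a regular cardinal $\kappa$ exceeding the presentation ranks of the domains of $I$ (supplied by local presentability via Remark~\ref{rem:loc-pres}), and the same $\kappa$-presentability argument to factor a test map $A\to F^\kappa(f)$ through an earlier stage and extract the lift. The only difference is notational (you index squares over $\rho_\alpha$ directly where the paper writes the iteration as $F^{\beta+1}(f)=F(\rho^\beta_f)$), which amounts to the same construction.
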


\begin{proof}
  The required factorisation is produced by the ``small object argument'', due to Quillen (see also~\cite{garner:undsoa} for an extensive discussion): 
  \begin{itemizeminus}
    \item For any $f$ in $\Morph{\ctg{C}}$, let $S_f$ be the set of morphisms of          $\Morph{\ctg{C}}$ with domain in $I$ and codomain $f$, that is
      \begin{displaymath}
        S_f=\setbis{s=(u_s,v_s)\in\Morph{\ctg{C}}}
                   {\dom (s)=i_s\in I,\ \cod (s)=f}.
      \end{displaymath}
We get a functor $F:\Morph{\ctg{C}}\to\ctg{C}$ together with natural transformations $\lambda:\dom \to F$ and $\rho:F\to\cod$ determined by the inscribed pushout of the outer commutative square
\begin{center}
 $\xygraph{ 
!{<0cm,0cm>;<1cm,0cm>:<0cm,1cm>::}
!{(0,0) }*+{{\displaystyle \coprod_{s\in S_f} A_s}}="1"
!{(0,-2.8) }*+{{\displaystyle \coprod_{s \in S_f} B_s}}="2"
!{(3.6,0) }*+{X}="3"
!{(3.6,-2.8)}*+{Y}="4"
!{(2.4,-2)}*+{F (f)}="5"
"1":"2"_{{\displaystyle \coprod_{s \in S_f} i_s}}
"1":"3"^{[(u_s)_{s\in S_f}]}
"2":"4"_{[(v_s)_{s\in S_f}]}
"3":"4"^{f}
"2":"5"^{j_0}
"3":"5"_{\lambda_f}
"5":@{.>}"4"^{\rho_f}
!{(1.8,-1.45)}*+{}="pusha" 
!{(2.05,-1.45)}*+{}="pushb" 
!{(1.8,-1.7)}*+{}="pushc" 
"pusha":@{-}"pushb" 
"pusha":@{-}"pushc"
}$
\end{center}
where, for each $s\in S_f$, $i_s:A_s\to B_s$, and $[(u_s)_{s\in S_f}]$, $[(v_s)_{s\in S_f}]$ are given by the universal property of coproducts.

\item By transfinite iteration of the previous construction, we get, for each ordinal $\beta$, a triple $(F^{\beta},\lambda^{\beta},\rho^{\beta})$. Precisely,
  \begin{eqnarray*}
    F^{0}(f) & \deq & F(f),\\
    \lambda^0_f & \deq & \lambda_f, \\
    \rho^0_f    & \deq & \rho_f;
  \end{eqnarray*}
if $\beta+1$ is a successor ordinal, then
  \begin{eqnarray*}
    F^{\beta + 1} (f) & \deq & F \left( \rho^{\beta}_f
    \right), \\
    \lambda^{\beta + 1}_f & \deq  &
    \lambda_{\rho^{\beta}_f} \circ \lambda^{\beta}_f, \\
    \rho^{\beta \upl 1}_f & \deq & \rho_{\rho^{\beta}_f},
  \end{eqnarray*}
  and if $\beta$ be a limit ordinal, then
  \[ \text{$F^{\beta} (f) \deq \tmop{colim}_{\gamma <
     \beta} F^{\gamma} (f)$} \]
  while $\lambda^{\beta}_f$ and $\rho^{\beta}_f$ are given by transfinite
  composition and universal property, respectively.
\item Now notice that, for each ordinal $\beta$, $\lambda^{\beta}_{f}$ belongs to
$\Icell{I}$, and that $(\lambda^{\beta}_f,\rho^{\beta}_f)$ is a functorial factorisation. It remains to show that there is an ordinal $\kappa$ for which $\rho^{\kappa}_f$ belongs to $\Ifib{I}$. This is where local presentability helps: thus, let $\kappa$ be a regular cardinal such that for each $i\in I$, the presentation rank $\pi (\dom i)$ is strictly smaller than $\kappa$, and suppose that the outer square of the following diagram commutes:
\begin{center}
$\xygraph{ 
!{<0cm,0cm>;<1cm,0cm>:<0cm,1cm>::}
!{(0,0) }*+{A}="A"
!{(0,-4) }*+{B}="B"
!{(4,0) }*+{F^\kappa (f)}="Fk"
!{(4,-4)}*+{Y}="Y"
!{(1.3,-2.7) }*+{{\displaystyle \coprod_{s \in S_{\rho^\beta_f}} B_s}}="coprod"
!{(2.6,-1.4) }*+{F^{\beta + 1} (f)}="Fb"
"A":"B"_{i}
"A":"Fk"^{u}
"B":"Y"_{v}
"Fk":"Y"^{\rho^\kappa_f}
"B":@{.>}"coprod"_(.4){in_B}
"coprod":@{.>}"Fb"^(.5){j_{\beta + 1}}
"Fb":@{.>}"Fk"^{c^{\beta + 1,\kappa}}
}$
\end{center}
Since $A$ is $\kappa$-presentable and $F^{\kappa} (f)$ is a
$\kappa$-filtered colimit, there is a $\beta < \kappa$ such that $u$ factors
through $F^{\beta} (f)$ as $u = c_{\beta, \kappa} \circ u'$ for some $u'$,
with $c_{\beta, \kappa} : F^{\beta} (f) \rightarrow F^{\kappa} (f)$ the
colimiting morphism. It follows then from the above construction that
$c_{\beta + 1, \kappa} \circ j_{\beta + 1} \circ \tmop{in}_B $ is a lift, whence $\rho^{\kappa}_f\in\Ifib{I}$, and we are done. 
\end{itemizeminus}
\end{proof}

\subsection{Model structures and cofibrant generation}\label{subsec:cofgen}

We say that a class $\mathcal{A}$ of morphisms has the \tmtextit{$3$ for $2$
property} if whenever $h = g \circ f$ and any two out of the three morphisms
$f$, $g$, $h$ belong to $\mathcal{A}$, then so does the third. We now recall the basics of model structures, following the presentation of~\cite{hovey:modcat}.

\begin{defn}
  \label{def:modelstruc}A \tmtextit{model structure} on a complete and
  cocomplete category $\ctg{C}$ is given by three classes of
  morphisms, the class $\mathcal{C}$ of \tmtextit{cofibrations}, the class
  $\mathcal{F}$ of \tmtextit{fibrations}, and the class $\Weq$ of
  \tmtextit{weak equivalences}, satisfying the following conditions:
  \begin{description}
    \item[(M1)] $\Weq$ has the $3$ for $2$ property;
    
    \item[(M2)] $\mathcal{C}$, $\mathcal{F}$ and $\Weq$ are stable
    under retracts;
    
    \item[(M3)] $\mathcal{C} \cap \Weq \subseteq \; \llp{\mathcal{F}}
    $ and $\mathcal{F} \cap \Weq \subseteq \; \rlp{\mathcal{C}}$;
    
    \item[(M4)] the pairs $( \mathcal{C} \cap \Weq, \mathcal{F})$ and
    $( \mathcal{C}, \mathcal{F} \cap \Weq)$ are functorial
    factorisations.
  \end{description}
  A complete and cocomplete category equipped with a model structure is called
  a \tmtextit{model category}. The members of $\mathcal{F} \cap \Weq$
  are called \tmtextit{trivial fibrations} and the members of $\mathcal{C}
  \cap \Weq$ are \tmtextit{trivial cofibrations}.
\end{defn}

\begin{rem}
  There is a certain amount of redundancy in the definition of a model
  category as the class of fibrations is determined by the class of
  cofibrations and vice-versa: we have
  \begin{itemizeminus}
    \item $\mathcal{F}= \Ifib{( \mathcal{C} \cap \Weq)}$
    
    \item $\mathcal{F} \cap \Weq = \Ifib{\mathcal{C}}$;
  \end{itemizeminus}
  as well as
  \begin{itemizeminus}
    \item $\mathcal{C}= \; \llp{( \mathcal{F} \cap \Weq)}$;
    
    \item $\mathcal{C} \cap \Weq = \; \llp{\mathcal{F}}$.
  \end{itemizeminus}
\end{rem}

In most known model categories cofibrations and fibrations are generated by {\em sets} of morphisms. In the case of locally-presentable categories, we get the following definition:

\begin{defn}
  A locally-presentable model category is \tmtextit{cofibrantly generated} if
  there are two sets $I$, $J$ of morphisms such that
  \begin{enumerateroman}
    \item $\mathcal{C}= \Icof{I}$;
    
    \item $\mathcal{C} \cap \Weq = \Icof{J}$.
  \end{enumerateroman}
  The morphisms in $I$ are called {\tmem{\tmtextit{{\tmem{generating
  cofibrations}}}{\tmem{}}}} while the morphisms in $J$ are called \
  \tmtextit{{\tmem{generating trivial cofibrations}}}. Locally-presentable,
  cofibrantly generated model categories are called {\tmem{combinatorial model
  categories}}. 
\end{defn}

Notice that a locally-presentable model category is combinatorial if and only
if $\mathcal{F} \cap \Weq = \Ifib{I}$ and $\mathcal{F}=\Ifib{J}$. The whole point in the definition of combinatorial model categories is the possibility to apply the small object argument to arbitrary sets $I$ and $J$. The general case, however, requires extra conditions on those sets.

\subsection{The solution set condition}\label{subsec:solset}

Let $\ctg{C}$ be a category, $i$ a morphism of
$\ctg{C}$ and $\Weq$ a class of morphisms of
$\ctg{C}$. We say that $\Weq$ admits a \tmtextit{solution
set at $i$} if there is a \tmtextit{set} $W_i$ of morphisms such
that any commutative square

\begin{center}
  $\xymatrix{
\bullet \ar[d]_i \ar[r] & \bullet \ar[d]^{w \in {\mathcal W}} \\ 
\bullet \ar[r] & \bullet
}$

\end{center}
where $w \in \Weq$ factors through some $w' \in \Weq_i$:
\begin{center}
  $\xymatrix{
\bullet \ar[d]_i \ar[r] & 
\bullet\ar[r] \ar[d]|{w' \in W_i} &
\bullet \ar[d]^{w \in {\mathcal W}} 
\\ 
\bullet \ar[r] & \bullet \ar[r] & \bullet
}$
\end{center}
If $I$ is a set of morphisms, we say that $\Weq$ admits a \tmtextit{solution set at I} if it admits a solution set at any $i \in I$.  

We now turn to Smith's theorem, on which our construction is based:

\begin{theorem}
  \label{thm:smith} Let $I$ be a set, and $\Weq$ a class of morphisms in
  a locally presentable category $\ctg{C}$. Suppose that
  \begin{description}
    \item[(S1)] $\Weq$ has the $3$ for $2$ property and is stable under
    retracts;
    
    \item[(S2)] $\Ifib{I}\subseteq \Weq$;
    
    \item[(S3)] $\Icof{I} \cap \Weq$ is closed under
    pushouts and transfinite compositions;
    
    \item[(S4)] $\Weq$ admits a solution set $J \subseteq \Icof{I}\cap \Weq$ at $I$.
  \end{description}
  Then $\ctg{C}$ is a combinatorial model category where
  $\Weq$ is the class of weak equivalences while $I$ is a set of
  generating cofibrations and $J$ is a set of generating trivial cofibrations.
\end{theorem}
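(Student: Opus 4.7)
The plan is to set $\mathcal{C} \deq \Icof{I}$ and $\mathcal{F} \deq \Ifib{J}$, then verify the four clauses of Definition~\ref{def:modelstruc}. The whole argument reduces to establishing the single identity $\mathcal{F}\cap\Weq = \Ifib{I}$; once this is in hand, (M3) and (M4) fall out by routine applications of the small-object argument (Proposition~\ref{prop:smallobject}) and the retract argument (Proposition~\ref{prop:retracts}).

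The bookkeeping axioms are immediate. (M1) is exactly the two-of-three half of (S1); for (M2), $\Weq$ is retract-stable by (S1), while $\mathcal{C}$ and $\mathcal{F}$ are retract-stable since they are of $\Icof{-}$- and $\Ifib{-}$-shape (Proposition~\ref{prop:closure}). I would then invoke Proposition~\ref{prop:smallobject} twice, once on $I$ and once on $J$, obtaining the functorial factorizations $(\Icell{I},\Ifib{I})$ and $(\Icell{J},\Ifib{J})$. Since $\Icell{I}\subseteq\Icof{I}=\mathcal{C}$, the first is a factorization into $\mathcal{C}$ followed by $\Ifib{I}$. Since $J\subseteq\Icof{I}\cap\Weq$ by (S4) and (S3) closes this class under pushouts and transfinite compositions, $\Icell{J}\subseteq\mathcal{C}\cap\Weq$, so the second is already a $(\mathcal{C}\cap\Weq,\mathcal{F})$-factorization, which gives one half of (M4).

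The main obstacle is the identity $\mathcal{F}\cap\Weq = \Ifib{I}$. The forward inclusion is free: $\Ifib{I}\subseteq\Weq$ by (S2), and $J\subseteq\Icof{I}$ yields $\Ifib{I}=\Ifib{(\Icof{I})}\subseteq\Ifib{J}=\mathcal{F}$ via Remark~\ref{rem:inj-cell-cof}. The reverse inclusion is where (S4) earns its keep. Given $f\in\mathcal{F}\cap\Weq$ together with a square from some $i\in I$ to $f$, the solution-set clause (S4) factors that square through some $j\in J$; because $j\in J$ and $f\in\Ifib{J}$, the inner right-hand square admits a lift, and composing this lift with the middle horizontal yields a diagonal for the outer square. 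Hence $f\in\Ifib{I}$.

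With this identity secured, the first functorial factorization $(\Icell{I},\Ifib{I})$ is a $(\mathcal{C},\mathcal{F}\cap\Weq)$-factorization, which completes (M4). For (M3), the inclusion $\mathcal{F}\cap\Weq\subseteq\rlp{\mathcal{C}}$ follows at once from $\mathcal{F}\cap\Weq=\Ifib{I}=\rlp{I}$ and $\mathcal{C}=\Icof{I}=\llp{\rlp{I}}$. For $\mathcal{C}\cap\Weq\subseteq\llp{\mathcal{F}}$, I would take $f\in\mathcal{C}\cap\Weq$, factor it as $f=p\circ j$ with $j\in\Icell{J}$ and $p\in\mathcal{F}$; by two-of-three $p\in\Weq$, so $p\in\mathcal{F}\cap\Weq=\Ifib{I}$, and since $f\in\Icof{I}=\llp{\Ifib{I}}$ we have $f\pitchfork p$. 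Proposition~\ref{prop:retracts} then exhibits $f$ as a retract of $j\in\Icof{J}=\llp{\mathcal{F}}$, and retract-stability of $\Icof{J}$ (Proposition~\ref{prop:closure}) places $f\in\llp{\mathcal{F}}$, finishing the proof.
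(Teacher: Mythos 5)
Your proof is correct, but it takes a genuinely different route from the paper's. The paper reduces everything to Smith's Lemma (Lemma~\ref{lem:smith}, giving $\Icof{J}=\Icof{I}\cap\Weq$), which it does not prove but imports from Beke; Remark~\ref{rem:J} and Lemma~\ref{lem:triv-fibs} then carry the load, and in particular the key inclusion $\Ifib{J}\cap\Weq\subseteq\Ifib{I}$ is obtained there by factoring through $(\Icell{I},\Ifib{I})$ and applying the retract argument, which requires $\Ifib{J}=\Ifib{(\Icof{I}\cap\Weq)}$ from Smith's Lemma. You bypass Smith's Lemma entirely: you prove $\Ifib{J}\cap\Weq\subseteq\Ifib{I}$ directly from \sfour by composing the lift against $j$ with the middle horizontal of the solution-set factorization (a correct and pleasantly short argument), you obtain $\Icell{J}\subseteq\Icof{I}\cap\Weq$ directly from \sthree together with $J\subseteq\Icof{I}\cap\Weq$, and your factor-and-retract argument for $\mathcal{C}\cap\Weq\subseteq\;\llp{\mathcal{F}}$ in effect reproves the nontrivial inclusion $\Icof{I}\cap\Weq\subseteq\Icof{J}$ of Smith's Lemma rather than citing it. What your approach buys is self-containedness: every step rests only on the small object argument, the retract argument, and the stated closure properties. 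What the paper's approach buys is the identity $\Icof{J}=\Icof{I}\cap\Weq$ up front, at the price of an external reference. One minor quibble: you attribute retract-stability of $\Ifib{J}$ to Proposition~\ref{prop:closure}, which as stated lists retract-closure only for $\Cof$; the fact is of course standard for any class of the form $\rlp{\mathcal{A}}$, and the paper glosses over it in the same way (``by construction'').
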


We refer to~\cite{beke:shhomc} for an extensive discussion of Theorem~\ref{thm:smith}. In the original statement, \sfour only requires the existence of a solution set, without any inclusion condition. The present version brings a minor simplification in the treatment of our particular case.

For the remaining of this section, we assume the hypotheses of Theorem~\ref{thm:smith}.

\begin{lemma}
  {\dueto{Smith}}\label{lem:smith} Suppose there is a class $\mathcal{J}
  \subseteq \Icof{I}\cap \Weq$ such that each
  commuting square
  \begin{center}
   $\xymatrix{
\bullet \ar[d]_i \ar[r] & \bullet \ar[d]^{w \in {\mathcal W}} \\ 
\bullet \ar[r] & \bullet
}$
  \end{center}
  admits a factorisation
  \begin{center}
   $\xymatrix{
\bullet \ar[d]_i \ar[r] & 
\bullet\ar[r] \ar[d]|{j \in {\mathcal J}} &
\bullet \ar[d]^{w \in {\mathcal W}} 
\\ 
\bullet \ar[r] & \bullet \ar[r] & \bullet
}$
  \end{center}
  
  Then
  \[ \Icof{\mathcal{J}} = \Icof{I}\cap \Weq \]
\end{lemma}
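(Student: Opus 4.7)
The plan is to prove both inclusions separately, the inclusion $\Icof{I}\cap\Weq\subseteq\Icof{\mathcal{J}}$ being the serious one.

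For $\Icof{\mathcal{J}}\subseteq\Icof{I}\cap\Weq$, the inclusion into $\Icof{I}$ is formal: $\mathcal{J}\subseteq\Icof{I}$ gives $\Ifib{I}=\rlp{\Icof{I}}\subseteq\rlp{\mathcal{J}}$, hence $\Icof{\mathcal{J}}=\llp{\rlp{\mathcal{J}}}\subseteq\llp{\Ifib{I}}=\Icof{I}$. To land inside $\Weq$ as well, apply the small object argument (Proposition~\ref{prop:smallobject}) to $\mathcal{J}$ and factor any $f\in\Icof{\mathcal{J}}$ as $f=p\circ q$ with $q\in\Icell{\mathcal{J}}$ and $p\in\rlp{\mathcal{J}}$. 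Now $q$ is a transfinite composition of pushouts of coproducts of $\mathcal{J}$-maps, and since $\mathcal{J}\subseteq\Icof{I}\cap\Weq$ and this class is closed under pushouts and transfinite compositions by \sthree, under coproducts (as iterated pushouts from initial objects), and under retracts by \sone{} together with Proposition~\ref{prop:closure}, one obtains $q\in\Icof{I}\cap\Weq$. The lifting $f\pitchfork p$ follows from $f\in\llp{\rlp{\mathcal{J}}}$, so the retract argument (Proposition~\ref{prop:retracts}) exhibits $f$ as a retract of $q$ and places it in $\Icof{I}\cap\Weq$.

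For the converse, given $f:X\to Y$ in $\Icof{I}\cap\Weq$, I mimic the construction of Proposition~\ref{prop:smallobject} but interpose $\mathcal{J}$ at each step. Set $X_0=X$ with $X_0\to Y=f$. At a successor stage $\beta+1$, assuming inductively $X_\beta\to Y\in\Weq$, let $S_\beta$ be the set of squares with left edge some $i_s:A_s\to B_s$ in $I$, top $A_s\to X_\beta$, bottom $B_s\to Y$ and right edge $X_\beta\to Y$; the factorisation hypothesis inserts some $j_s:M_s\to N_s\in\mathcal{J}$ as middle vertical in each. Define $X_{\beta+1}$ as the pushout of $\coprod_{s\in S_\beta}j_s$ along $\coprod M_s\to X_\beta$, with $X_{\beta+1}\to Y$ produced by the universal property; at limit ordinals, take colimits. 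Each step $X_\beta\to X_{\beta+1}$ is a pushout of a coproduct of $\mathcal{J}$-maps, so it belongs to $\Icof{\mathcal{J}}$ by construction and to $\Icof{I}\cap\Weq$ by \sthree; 3-for-2 from \sone{} then transfers weak-equivalence membership from $X_\beta\to Y$ to $X_{\beta+1}\to Y$, maintaining the invariant, and the transfinite composition clause of \sthree{} does the analogous job at limits.

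Choosing $\kappa$ large enough that the presentation rank of $\dom(i)$ is strictly less than $\kappa$ for every $i\in I$, the termination argument of Proposition~\ref{prop:smallobject} yields $X_\kappa\to Y\in\Ifib{I}$, while $X_0\to X_\kappa\in\Icof{\mathcal{J}}$ by construction. Since $f\in\Icof{I}$ and $X_\kappa\to Y\in\Ifib{I}=\rlp{\Icof{I}}$, we have $f\pitchfork(X_\kappa\to Y)$, so the retract argument exhibits $f$ as a retract of $X_0\to X_\kappa$ and places $f$ in $\Icof{\mathcal{J}}$ by retract closure. The principal obstacle is the maintenance of the weak equivalence invariant throughout this modified construction: this is the only place where \sone, \sthree{} and the factorisation hypothesis must all cooperate, and it is what permits the transfinite induction to proceed past the first stage.
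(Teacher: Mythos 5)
Your proof is correct, and it is essentially the proof the paper points to rather than reproduces: the text gives no argument for this lemma, deferring to Lemma~1.8 of Beke and describing the method as ``the small object argument combined with an induction step'', which is exactly your hybrid transfinite construction (interposing $\mathcal{J}$ at each stage, maintaining the weak-equivalence invariant for $X_\beta\to Y$ via \sone{} and \sthree, and concluding with the retract argument against $X_\kappa\to Y\in\Ifib{I}$). The one point to watch is that in the easy inclusion you apply Proposition~\ref{prop:smallobject} to $\mathcal{J}$, which that proposition only licenses when $\mathcal{J}$ is a \emph{set}; this is harmless in the paper's sole application of the lemma (where $\mathcal{J}$ is the solution set $J$ of \sfour), but the statement allows a proper class, for which the factorisation $f=p\circ q$ with $q\in\Icell{\mathcal{J}}$ is not available as written.
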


Lemma~\ref{lem:smith} is a key step in the proof of Theorem~\ref{thm:smith}. This is Lemma~1.8 in~\cite{beke:shhomc}, where a complete proof is given, based again on the small object argument combined with an induction step.

\begin{rem}\label{rem:J} We have
  \[ \Icof{J} = \Icof{I}\cap \Weq \]
  by Lemma~\ref{lem:smith}, so in particular
  \[ \Ifib{J} = \Ifib{(\Icof{I}\cap \Weq)} \]
  by remark \ref{rem:inj-cell-cof}.
\end{rem}

\begin{lemma}\label{lem:triv-fibs}
$\Ifib{I} = \Ifib{J}\cap \Weq$.
\end{lemma}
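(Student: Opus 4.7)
The plan is to prove the two inclusions separately, using the factorisation machinery of Proposition~\ref{prop:smallobject} together with Remark~\ref{rem:J} and the retract argument (Proposition~\ref{prop:retracts}).

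For the easy inclusion $\Ifib{I} \subseteq \Ifib{J} \cap \Weq$, I would combine two observations. First, $\Ifib{I} \subseteq \Weq$ is exactly hypothesis \stwo. Second, since $J \subseteq \Icof{I}$ by \sfour, and $\Ifib{I} = \Ifib{(\Icof{I})}$ by Remark~\ref{rem:inj-cell-cof}, any $I$-injective is in particular $J$-injective. Thus $\Ifib{I} \subseteq \Ifib{J} \cap \Weq$.

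For the converse inclusion, I would take $f \in \Ifib{J} \cap \Weq$ and apply Proposition~\ref{prop:smallobject} to factor $f = p \circ j$ with $j \in \Icell{I}$ and $p \in \Ifib{I}$. By \stwo, $p \in \Weq$; since also $f \in \Weq$, the $3$-for-$2$ part of \sone yields $j \in \Weq$. Hence $j \in \Icell{I} \cap \Weq \subseteq \Icof{I} \cap \Weq = \Icof{J}$, where the last equality is Remark~\ref{rem:J}. Because $f \in \Ifib{J} = \Ifib{(\Icof{J})}$, this gives $j \pitchfork f$. The retract argument (Proposition~\ref{prop:retracts}, second bullet, with $f' = j$ and $f'' = p$) then shows that $f$ is a retract of $p$. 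Since the class $\Ifib{I}$, being defined by a right lifting property, is closed under retracts, we conclude $f \in \Ifib{I}$.

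The only delicate point is the appeal to closure of $\Ifib{I}$ under retracts: this is a standard formal consequence of the definition via lifting properties (it can be checked directly by composing the retract diagram with the given lifting square), even though Proposition~\ref{prop:closure} in the text only records closure of $\Tfb$ under composition and pullback. The rest is a straightforward orchestration of the small-object argument, $3$-for-$2$, and the retract argument, all of which are available from earlier in the paper.
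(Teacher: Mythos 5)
Your proof is correct, and on the hard inclusion ``$\supseteq$'' it is essentially the paper's own argument: factor $f$ by the small object argument for $I$, use \stwo and the $3$-for-$2$ part of \sone to see that the left factor is a trivial cofibration, hence lies in $\Icof{J}$ by Remark~\ref{rem:J}, produce a lift, and conclude by the retract argument. (One trivial slip: with your naming $f'=j$, $f''=p$, the clause of Proposition~\ref{prop:retracts} you need is the \emph{first} one, ``$f'\pitchfork f$ implies $f$ is a retract of $f''$'', not the second.) On the easy inclusion the paper proceeds differently: instead of invoking $J\subseteq\Icof{I}$ together with the identity $\Ifib{I}=\Ifib{(\Icof{I})}$, it takes an explicit square from some $j\in J$ to $f\in\Ifib{I}$, factors $f$ as a $J$-cofibration followed by a $J$-injective, and composes two successive lifts. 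Your route is shorter and relies only on the formal Galois-connection facts already recorded in Remark~\ref{rem:inj-cell-cof}; the paper's version is more hands-on but establishes nothing more. Finally, both your proof and the paper's silently use that $\Ifib{I}$ is closed under retracts; you are right that this is not literally among the closure properties listed in Proposition~\ref{prop:closure}, but it is a standard consequence of the definition by a right lifting property, so flagging it and moving on is entirely appropriate.
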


\begin{proof} ``$\subseteq$'' \ Since $\Ifib{I} \subseteq
\Weq$, by \stwo, we need to show that $\Ifib{I}
\subseteq \Ifib{J}$. Let $j \in J$, $f \in \Ifib{I}$ and suppose $f \circ u = v \circ u$ for some $u$ and $v$.
The small object argument produces a factorisation $f = f'' \circ f'$ with $f'
\in \Icof{J}$ and $f'' \in \Ifib{J}$, so there
are $p$ and $q$ such that the following diagram commutes (the existence of $q$ is a consequence of Remark~\ref{rem:J}):

\begin{center}
  $\xygraph{ 
!{<0cm,0cm>;<1cm,0cm>:<0cm,1cm>::}
!{(0,0) }*+{\bullet}="1"
!{(0,-3) }*+{\bullet}="2"
!{(2,0) }*+{\bullet}="3"
!{(2,-1)}*+{\bullet}="4"
!{(2,-3)}*+{\bullet}="5"
!{(4,0)}*+{\bullet}="6"
"1":"2"_{J \ni j}
"1":"3"^u
"2":"5"_v
"3":"4"_{J-cof \ni f^\prime}
"4":"5"_(.7){J-inj \ni f^{\prime\prime}}
"3":@{=}"6"
"6":"5"^{f \in I-inj}
"2":@/^.35cm/@{.>}"4"^p
"4":@{.>}"6"^q
}$
\end{center}

``$\supseteq$'' Let $f \in \Ifib{J} \cap \Weq$. The
small object argument produces a factorisation

\begin{center}
  $\xygraph{ 
!{<0cm,0cm>;<1cm,0cm>:<0cm,1cm>::}
!{(1.25,0) }*+{\bullet}="1"
!{(0,-1) }*+{\bullet}="2"
!{(1.25,-2) }*+{\bullet}="3"
"1":"2"_{I-cof \ni f^\prime}
"1":"3"^{f \in (J-inj) \cap {\mathcal W}}
"2":"3"_{I-inj \ni f^{\prime\prime}}
}$
\end{center}

so $f' \in \Icof{I} \cap \Weq$ by \sone. On the other hand $f \in \Ifib{(\Icof{I}\cap \Weq)}$ by Remark~\ref{rem:J}, so $f \in \Ifib{I}$ by the retract argument (see Proposition~\ref{prop:retracts}).
\end{proof}

\begin{proofthm}{\em \ref{thm:smith}.}
Let $\mathcal{C} \deq \Icof{I}$ \ and $\mathcal{F}\deq \Ifib{J}$. It readily follows that
$\Weq$, $\mathcal{C}$ and $\mathcal{F}$ are the constituent classes of
a model structure on $\ctg{C}$:

\begin{itemizeminus}
  \item \mone holds by hypothesis;
  \item \mtwo  holds by hypothesis for $\Weq$, by construction for $\mathcal{C}$ and $\mathcal{F}$;
  \item as for \mthree, consider a commutative square:
  \begin{center}
   $\xygraph{ 
!{<0cm,0cm>;<1cm,0cm>:<0cm,1cm>::}
!{(0,0) }*+{\bullet}="1"
!{(0,-1.2) }*+{\bullet}="2"
!{(1.2,0) }*+{\bullet}="3"
!{(1.2,-1.2) }*+{\bullet}="4"
"1":"2"_{I-cof \ni c}
"3":"4"^{f \in J-inj}
"1":"3"
"2":"4"
}$
  \end{center}
  If $c \in \Weq$ then this square admits a lift by Remark~\ref{rem:J}. On the other hand, if $f \in \Weq$ then this square admits a lift by Lemma~\ref{lem:triv-fibs};
\item \mfour holds because the factorisations are constructed using the small object argument and have the required properties by Lemma~\ref{lem:triv-fibs} and Remark~\ref{rem:J}, respectively.
\end{itemizeminus}
Therefore $\ctg{C}$ is a combinatorial model category by Remark~\ref{rem:J}.
\end{proofthm}

\section{Higher dimensional categories}\label{sec:omegacat}

This section is devoted to a brief review of higher dimensional categories, here defined as globular sets with structure.
 
\subsection{Globular sets}\label{subsec:globular sets}

Let $\glob$ be the small category whose objects are integers $0,1,\ldots$, and whose morphisms are generated by $\cosce{n},\cotge{n}:n\to n{+}1$ for $n\in\NN$, subject to the following equations:
 \begin{eqnarray*}
          \cosce{n+1}\circ\cosce{n} & = & \cotge{n+1}\circ\cosce{n}, \\
          \cosce{n+1}\circ\cotge{n} & = & \cotge{n+1}\circ\cotge{n}.
        \end{eqnarray*}
These equations imply that there are exactly two morphisms from $m$ to $n$ if $m<n$, none if $m>n$, and only the identity if $m=n$.
\begin{defn}\label{defin:globset}
  A {\em globular set} is a presheaf on $\glob$.
\end{defn}
In other words, a globular set is a functor from
$\opp{\glob}$ to $\sets$. Globular sets and natural transformations
form a category $\globset$. If $X$ is a globular set, we denote by $X_n$ the image of $n\in\NN$ by $X$; members of $X_n$ are called {\em $n$-cells}. By defining $\sce{n}=X(\cosce{n})$ and $\tge{n}=X(\cotge{n})$, we get {\em source} and {\em target} maps
\begin{displaymath}
  \sce n,\tge n:X_{n{+}1}\to X_n.
\end{displaymath}
More generally, whenever $m>n$, one defines
\begin{eqnarray*}
  \SCE nm & = & \sce n\circ\cdots\circ\sce{m{-}1},\\
  \TGE nm & = & \tge n\circ\cdots\circ\tge{m{-}1},
\end{eqnarray*}
so that $\SCE nm$ and $\TGE nm$ are maps from $X_m$ to $X_n$.
Let us call two $n$-cells $x$, $y$ {\em parallel} whenever $n=0$, or $n>0$ and
\begin{eqnarray*}
  \sce{n{-}1}(x) & = & \sce{n{-}1}(y),\\
  \tge{n{-}1}(x) & = & \tge{n{-}1}(y).
\end{eqnarray*}
We write $x\para y$ whenever $x$, $y$, are parallel cells:
        \begin{displaymath}
          \begin{xy}
               \xymatrix{
               \bullet\ar @/^2ex/[r]^{x}\ar @/_2ex/[r]_{y}
               &
            \bullet }
          \end{xy}        
        \end{displaymath}

We will need a few additional notations about globular sets:
\begin{itemizeminus}
\item if $u$ is an $n{+}1$-cell, we write $u : x \to y$ whenever $\sce nu=x$ and $\tge nu=y$, in which case $x\para y$;
\item if $m > n$ and $u$ is an $m$-cell, we write $u : x \To n y$ whenever $\SCE nm(u)=x$ and $\TGE nm(u)=y$. Here again $x$, $y$ are parallel $n$-cells;
\item we write $u\Cons n v$ if $u : x \To n y$ and $v : y \To n  z$ for some $m$-cells $u$, $v$ and $n$-cells $x$, $y$, $z$;
\item if $n > 0$ and $u$ is an $n$-cell, we write $\Sce u$ for $\SCE 0n(u)$ and $\Tge u$ for $\TGE 0n(u)$, so that we get $u : \Sce u \To 0 \Tge u$. 
\end{itemizeminus}

\subsection{Strict $\omega$-categories}\label{subsec:highdimcat}

A {\em strict $\omega$-category} is a globular set $C$ endowed with operations of composition and units, satisfying the laws of associativity, units and interchange, as follows: 
\begin{itemizeminus}
\item if $u$, $v$ are $m$-cells such that $u \Cons n v$, we write $u \Comp n v$ for the $n$-\emph{composition} of $u$ with $v$ (in diagrammatic order);
\item if $x$ is an $n$-cell, we write $\unit x : x \to x$ for the corresponding $n{+}1$-\emph{dimensional unit};
\item if $x$ is an $n$-cell and $m > n$, we write $\Unit m x$ for the corresponding $m$-\emph{dimensional unit}. We also write $\Unit n x$ for $x$;
\item if $m > n > p$, we write $u \Comp p v$ for $\Unit m u \Comp p v$ whenever $u : x \To p y$ is an $n$-cell and $v : y \To p z$ is an $m$-cell;
\item similarly, we write $u \Comp p v$ for $u \Comp p \Unit m v$ whenever $u : x \To p y$ is an $m$-cell and $v : y \To p z$ is an $n$-cell.
\end{itemizeminus}
If $m > n$, the following identities hold for any $m$-cells $u \Cons n v \Cons n w$ and for any $m$-cell $u : x \To n y$:
\[
(u \Comp n v) \Comp n w = u \Comp n (v \Comp n w), \qquad
\Unit m x \Comp n u = u = u \Comp n \Unit m y.
\]
If $m > n > p$, the following identities hold for any $m$-cells $u \Cons n u'$ and $v \Cons n v'$ such that $u \Cons p v$ (so that $u' \Cons p v'$), for any $n$-cells $x \Cons p y$, and for any $p$-cell $z$:
\[
(u \Comp n u') \Comp p (v \Comp n v') = (u \Comp p v) \Comp n (u' \Comp p v'), \qquad
\Unit m x \Comp p \Unit m y = \Unit m {x \Comp p y}, \qquad
\Unit m {\Unit n z} = \Unit m z.
\]
An $\omega$-\emph{functor} is a morphism of globular sets preserving compositions and units. Thus, $\omega$-categories and $\omega$-functors build the category $\ocat$, which is our main object of study.

The forgetful functor $U:\ocat\to\globset$ is finitary monadic~\cite{batanin:mongcw} and $\globset$ is a topos of presheaves on a small category: therefore $\ocat$ is complete and cocomplete. On the other hand, the left adjoint to $U$ takes a globular set to the {\em free} $\omega$-category it generates. In particular, consider $Y:\glob\to\globset$ the Yoneda embedding: we get, for each $n$, a representable globular set $Y(n)=\Hom{\glob}{-}{n}$. 
\begin{defn}\label{def:nglobe}
  For $n\geq 0$, the {\em $n$-globe $\OO n$} is the free $\omega$-category generated by $Y(n)$. 
\end{defn}

Notice that $\OO n$ has exactly two non-identity $i$-cells for $i<n$, exactly one non-identity $n$-cell, and no non-identity cells in dimensions $i>n$.

\begin{proposition}\label{prop:locfp}
  $\ocat$ is locally finitely presentable.
\end{proposition}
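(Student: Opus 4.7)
The plan is to reduce the claim to two standard facts: (a) every presheaf category on a small category is locally finitely presentable, and (b) the category of algebras for a finitary monad on a locally finitely presentable category is again locally finitely presentable (Ad\'amek--Rosick\'y; see already \cite{gabrielulmer:lokprc}). Both ingredients are handed to us by the preceding discussion: $\globset$ is a presheaf category on the small category $\glob$, and the forgetful functor $U:\ocat\to\globset$ is finitary monadic by \cite{batanin:mongcw}. Completeness and cocompleteness of $\ocat$ have already been noted there.

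For (a), I would recall that each representable $Y(n)$ is finitely presentable in $\globset$, since $\Hom{\globset}{Y(n)}{-}$ is the evaluation functor at $n$ and therefore preserves all colimits; moreover every globular set is a canonical filtered colimit of representables over its category of elements. Hence $\{Y(n)\}_{n\in\NN}$ is a small generating family of finitely presentable objects witnessing that $\globset$ is locally finitely presentable.

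For (b), since $U$ is finitary monadic, $\ocat$ is equivalent to the Eilenberg--Moore category $\globset^{\GG}$ for some finitary monad $\GG$ on the locally finitely presentable category $\globset$, and the cited theorem delivers the conclusion immediately. One can make the witnesses explicit: the left adjoint $\II\dashv U$ sends finitely presentable objects to finitely presentable objects (because its right adjoint preserves filtered colimits), so each globe $\OO n=\II(Y(n))$ is finitely presentable in $\ocat$, and the small family $\{\OO n\}_{n\in\NN}$ generates $\ocat$ under filtered colimits.

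The argument has no real hard point; the only step that deserves attention is the passage from a generating family of finitely presentable objects in $\globset$ to one in $\globset^{\GG}$, which is precisely the content of the theorem cited in (b). For that reason I would appeal to it directly rather than unfold the underlying reflexive-coequaliser presentation of algebras and redo the presentation-rank bookkeeping by hand.
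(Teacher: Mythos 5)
Your argument is correct and follows essentially the same route as the paper's own (very terse) proof: the representables $Y(n)$ are finitely presentable in the presheaf category $\globset$, the forgetful functor preserves filtered colimits so its left adjoint sends them to finitely presentable globes $\OO n$, and the general theorem on finitary monads over locally finitely presentable categories does the rest. The only quibble is your closing claim that the globes themselves generate $\ocat$ under filtered colimits --- strictly one needs their closure under finite colimits, since algebras are built from free ones via reflexive coequalisers --- but this is not load-bearing, as you correctly defer the conclusion to the cited theorem.
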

 
\begin{proof}
It is a general fact that the representable objects $Y(n)$ are {\em finitely presentable}. Because $U$ preserves filtered colimits, all $n$-globes are finitely presentable objects in $\ocat$.
\end{proof}

\subsection{Shift construction}\label{subsec:shift}
The following construction will prove essential in defining the functor $\cnx$  of Section~\ref{subsec:connect} below. Thus, given an $\omega$-category $C$ and two $0$-cells $x$, $y$ in it, we define a new $\omega$-category $\HOM x y$ as follows:
\begin{itemizeminus}
\item there is an $n$-cell $\Sht u$ in $\HOM x y$ for each $n{+}1$-cell $u : x \To 0 y$;
\item for any $n{+}1$-cells $u, v : x \To 0 y$ and for any $n{+}2$-cell $w : u \to v$, we have $\Sht w : \Sht u \to \Sht v$ in $\HOM x y$;
\item $n$-composition is defined by $\Sht u \Comp n \Sht v = \Sht{u \Comp{n{+}1} v}$ whenever $u \Cons{n{+}1} v$;
\item $m$-dimensional units are defined by $\Unit m {\Sht u} = \Sht{\Unit {m{+}1} u}$.
\end{itemizeminus}
The verification of the axioms of $\omega$-categories is straightforward. We shall use some additional operations described below. For any $0$-cells $x, y, z$, we get:
\begin{itemizeminus}
\item a \emph{precomposition} $\omega$-functor $u \act {-} : \HOM y z \to \HOM x z$ for each 1-cell $u : x \to y$, defined by $u \act \Sht v = \Sht{u \Comp 0 v}$; 
\item a \emph{postcomposition} $\omega$-functor ${-} \act v : \HOM x y \to \HOM x z$ for each 1-cell $v : y \to z$, defined by $\Sht u \act v = \Sht{u \Comp 0 v}$;
\item a \emph{composition $\omega$-bifunctor} ${-} \COMP {-} : \HOM x y \times \HOM y z \to \HOM x z$, defined by $\Sht u \COMP \Sht v = \Sht{u \Comp 0 v}$.
\end{itemizeminus}

\section{The folk model structure}\label{sec:folkmodel}

The first step is to consider, for each $n$, the globular set $\partial Y(n)$ having the same cells as $Y(n)$ except for removing the unique $n$-cell. Thus $\partial Y(n)$ generates an $\omega$-category $\DO n$, the {\em boundary} of the $n$-globe, and we get an inclusion $\omega$-functor
\begin{displaymath}
  \ii n:\DO n\to \OO n.
\end{displaymath}
Notice that, for each $n$, we get a pushout:

\begin{equation}
\xygraph{ 
!{<0cm,0cm>;<1cm,0cm>:<0cm,1cm>::}
!{(0,0) }*+{\DO n}="ul"
!{(0,-1.5) }*+{\OO n}="dl"
!{(2,0) }*+{\OO n}="ur"
!{(2,-1.5) }*+{\DO{n{+}1}}="dr"
!{(1.5,-1.)}*+{}="pusha"
!{(1.7,-1.)}*+{}="pushb"
!{(1.5,-1.2)}*+{}="pushc"
"ul":"dl"_{\ii n}
"ul":"ur"^{\ii n}
"ur":"dr"
"dl":"dr"
"pusha":@{-}"pushb"
"pusha":@{-}"pushc"
}
\label{eq:pushout}
\end{equation}

The rest of this section is devoted to the construction of a
combinatorial model structure on $\ocat$ where \[\I \; \deq \; \{\ii n \;
| \; n \in \NN\}\] is a set of generating cofibrations.

\subsection{$\I$-injectives}\label{subsec:iinj}

Notice that an $\omega$-functor $f : X \to Y$ in $\Tfb$ can equivalently be characterised as verifying the following conditions:
\begin{itemizeminus}
\item for any 0-cell $y$ in $Y$, there is a 0-cell $x$ in $X$ such that $f \, x = y$;
\item for any $n$-cells $x \para x'$ in $X$ and for any $v : f \, x \to f \, x'$ in $Y$, there is $u : x \to x'$ in $X$ such that $f \, u = v$.
\end{itemizeminus} 

\begin{lemma}\label{lemma:surjectivity}
An $\omega$-functor $f : X \to Y$ in $\Tfb$ satisfies the following properties:
\begin{itemizeminus}
\item for any $n$-cell $y$ in $Y$, there is an $n$-cell $x$ in $X$ such that $f \, x = y$;
\item for any $n$-cells $y \para y'$ in $Y$, there are $n$-cells $x \para x'$ in $X$ such that $f \, x = y$ and $f \, x' = y'$.
\end{itemizeminus}
\end{lemma}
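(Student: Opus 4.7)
The plan is to prove both properties simultaneously by induction on the dimension $n$, using the reformulation of $\Tfb$ recalled just before the lemma. Throughout, I use these two lifting facts: (a) every $0$-cell of $Y$ is the image of some $0$-cell of $X$; (b) for parallel $n$-cells $x\para x'$ in $X$ and any $v : f\,x \to f\,x'$ in $Y$, there is $u : x\to x'$ in $X$ with $f\,u=v$.

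First I would observe that property (1) at dimension $n$ follows cheaply from property (2) at dimension $n{-}1$, together with (b). Indeed, for $n=0$ property (1) is exactly (a). For $n\geq 1$, take an $n$-cell $y$ of $Y$ with source $y_s$ and target $y_t$. These are parallel $(n{-}1)$-cells of $Y$, so by (2) at $n{-}1$ there exist parallel $(n{-}1)$-cells $x_s\para x_t$ in $X$ with $f\,x_s = y_s$ and $f\,x_t = y_t$. Then $y : f\,x_s \to f\,x_t$, and (b) yields $u : x_s \to x_t$ with $f\,u = y$.

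Next I would establish property (2) by the same induction. The base case $n=0$: given any $0$-cells $y, y'$ of $Y$, two applications of (a) provide $0$-cells $x, x'$ of $X$ mapping to them; all $0$-cells are parallel, so we are done. For the inductive step $n\geq 1$: if $y\para y'$ are $n$-cells of $Y$, they share a common source $y_s$ and common target $y_t$, which are parallel $(n{-}1)$-cells. By induction (property (2) at $n{-}1$), pick $x_s\para x_t$ in $X$ with $f\,x_s=y_s$ and $f\,x_t=y_t$. Now both $y$ and $y'$ are cells $f\,x_s \to f\,x_t$, so applying (b) twice produces $x : x_s\to x_t$ and $x' : x_s\to x_t$ in $X$ with $f\,x=y$ and $f\,x'=y'$. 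By construction $x\para x'$, finishing the induction.

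There is no real obstacle here: the statement is essentially the observation that the characterisation of $\Tfb$-maps in terms of lifting against the generating cofibrations $\ii n$ automatically upgrades to surjectivity on cells and on parallel pairs of cells, once one peels the globular boundary one dimension at a time. The only thing to be careful about is to couple the two statements correctly in the induction, so that property (1) at $n$ draws on property (2) at $n{-}1$, while property (2) at $n$ also draws on property (2) at $n{-}1$ (together with (b)); doing the induction on (2) alone and deducing (1) as a corollary keeps the argument clean.
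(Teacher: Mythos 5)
Your argument is correct: the coupled induction on $n$, with property (1) at dimension $n$ and property (2) at dimension $n$ both drawing on property (2) at dimension $n{-}1$ together with the lifting characterisation of $\Tfb$, is exactly the routine verification the paper has in mind. The paper in fact omits the proof entirely, treating the lemma as an immediate consequence of the characterisation of $\I$-injectives recalled just before it, so your write-up simply supplies the intended details.
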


\subsection{Omega-equivalence} \label{subsec:equivalence}

Our definition of weak equivalences is based on two notions:
reversible cells and $\omega$-equivalence between parallel cells. 
These notions are defined by mutual coinduction.
\begin{defn}
For any $n$-cells $x \para y$ in some $\omega$-category:
\begin{itemizeminus}
\item we say that $x$ and $y$ are $\omega$-\emph{equivalent}, and we write $x \eqv y$, if there is a \emph{reversible} $n{+}1$-cell $u : x \rto y$;
\item we say that the $n{+}1$-cell $u : x \to y$ is \emph{reversible}, and we write $u : x \rto y$, if there is an $n{+}1$-cell $\inv u : y \to x$ such that $u \Comp n \inv u \eqv \unit x$ and $\inv u \Comp n u \eqv \unit y$.
\end{itemizeminus}
Such a $\inv u$ is called a \emph{weak inverse} of $u$.
\end{defn}
Notice that there is no base case in such a definition. Hence, we get infinite trees of cells of increasing dimension. 

We now establish the first properties of reversible cells and $\omega$-equivalence.
\begin{lemma}\label{lemma:functor}
For any $\omega$-functor $f : X \to Y$ and for any $u : x \rto x'$ in $X$, we have $f \, u : f \, x \rto f \, x'$ in $Y$. Hence, $f$ preserves~$\eqv$.
\end{lemma}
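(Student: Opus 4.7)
The proof will proceed by coinduction on the mutual definition of reversibility and $\omega$-equivalence. First, I observe that the second assertion (``$f$ preserves $\eqv$'') follows immediately from the first: if $x \eqv x'$ in $X$, pick a reversible witness $u : x \rto x'$; then $fu : fx \to fx'$ is reversible by the first part, whence $fx \eqv fx'$. So the entire content is the preservation of reversibility.

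To establish that, I would consider the two families of cells in $Y$
\[
R \;=\; \{\,fu \mid u \text{ reversible in } X\,\}, \qquad E \;=\; \{\,(fx,fx') \mid x \eqv x' \text{ in } X\,\},
\]
and verify that the pair $(R, E)$ is a post-fixed point of the operator whose greatest fixed point is the pair (reversibility, $\omega$-equivalence). Take $fu \in R$, with $u : x \to x'$ in $X$, and let $\inv u$ be a weak inverse of $u$. Functoriality of $f$ yields
\[
fu \Comp n f\inv u \;=\; f(u \Comp n \inv u), \qquad \unit{fx} \;=\; f \unit x,
\]
and since $u \Comp n \inv u \eqv \unit x$ in $X$, the pair $(fu \Comp n f\inv u,\, \unit{fx})$ lies in $E$ by construction. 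The symmetric identity $\inv u \Comp n u \eqv \unit{x'}$ is treated identically. Thus $f\inv u$ witnesses that $fu$ is reversible modulo the candidate relation $E$. Conversely, for $(fx,fx') \in E$, any reversible $u : x \rto x'$ in $X$ witnessing $x \eqv x'$ supplies $fu \in R$ with source $fx$ and target $fx'$. The coinduction principle then guarantees that $R$ consists of genuinely reversible cells of $Y$ and $E$ of genuinely $\omega$-equivalent pairs of $Y$, which is the required statement.

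The argument is essentially formal: the only ingredient beyond the coinduction principle itself is that $f$ preserves $n$-compositions and units, which is immediate from $f$ being an $\omega$-functor. I do not anticipate any substantive obstacle; the only thing to keep in mind is the mutual nature of the coinduction, which forces $R$ and $E$ to be built and checked simultaneously rather than one after the other.
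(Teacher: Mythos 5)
Your proof is correct and follows essentially the same route as the paper: the paper also reduces everything to preservation of compositions and units by $f$ and then invokes coinduction, applying the lemma one dimension higher to the reversible witnesses $v : u \Comp n \inv u \rto \unit x$ and $v' : \inv u \Comp n u \rto \unit{x'}$. Your explicit post-fixed-point formulation with the pair $(R,E)$ is just a more formal rendering of the same coinductive step.
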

\begin{proof}
  Suppose that $x$, $x'$ are $n$-cells with $u:x\rto x'$. By definition, there
  is an $n{+}1$-cell $\inv u:x'\to x$ such that $u\Comp n\inv u\eqv\unit x$ and $\inv u\Comp n u\eqv\unit{x'}$, whence reversible $n{+}2$-cells $v:u\Comp n\inv u\rto\unit x$ and $v':\inv u\Comp n u\rto\unit{x'}$. Now, by coinduction, $f\, v:f\, u\Comp n f\,\inv u\rto\unit{f\,x}$ and $f\, v':f\, \inv u\Comp n f\, u\rto \unit{f\, x'}$. Therefore $f \, u : f \, x \rto f \, x'$.
\end{proof}

\begin{proposition}\label{prop:congruence}
The relation $\eqv$ is an $\omega$-congruence. More precisely:
\begin{enumerateroman}
\item For any $n$-cell $x$, we get $\unit x : x \rto x$. Hence, $\eqv$ is reflexive.\label{item:reflex}
\item For any reversible $n{+}1$-cell $u : x \rto y$, we get $\inv u : y \rto x$. Hence, $\eqv$ is symmetric.\label{item:sym}
\item For any reversible $n{+}1$-cells $u : x \rto y$ and $v : y \rto z$, we get $u \Comp n v : x \rto z$. Hence, $\eqv$ is transitive.\label{item:trans}
\item For any $n$-cells $x, y, z$, and for any $u : x \to y$, $s,t:y\To n z$ and $v : s \rto t$ we get $u \Comp n v : u \Comp n s \rto u \Comp n t$. There is a similar property for postcomposition. Hence, $\eqv$ is compatible with compositions.\label{item:compatible}
\end{enumerateroman}
\end{proposition}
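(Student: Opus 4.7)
The plan is to prove all four clauses of the proposition simultaneously by coinduction, exploiting the coinductive character of the definitions of reversibility and $\eqv$. Concretely, I would show that the collection of pairs $(x,y)$ of parallel $n$-cells produced by clauses (i)--(iv)---pairs of the form $(x,x)$, reversed pairs, $n$-composites of reversible cells, and whiskerings---carries a canonical choice of weak inverse such that the defining equations again fall into this collection one dimension up. The argument then reduces to exhibiting, for each clause, an explicit witness of the weak inverse together with the required $\omega$-equivalences, each of which is itself of one of the four forms at the next dimension.

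For (\ref{item:reflex}), I take $\unit x$ as its own weak inverse: by the unit law $\unit x\Comp n\unit x=\unit x$, and coinductively $\unit x\eqv\unit x$ again by (\ref{item:reflex}) one dimension up. For (\ref{item:sym}), given $u:x\rto y$ with witness $\inv u$ and equivalences $u\Comp n\inv u\eqv\unit x$, $\inv u\Comp n u\eqv\unit y$, I take $u$ as the weak inverse of $\inv u$; no fresh data is needed. For (\ref{item:trans}), the weak inverse of $u\Comp n v$ is $\inv v\Comp n\inv u$; using associativity, the composite $(u\Comp n v)\Comp n(\inv v\Comp n\inv u)$ rewrites as $u\Comp n(v\Comp n\inv v)\Comp n\inv u$, which by compatibility (clause~(\ref{item:compatible}) applied coinductively at dimension $n{+}1$) is $\eqv$-related to $u\Comp n\unit y\Comp n\inv u=u\Comp n\inv u$, itself $\eqv\unit x$, and transitivity~(\ref{item:trans}) one dimension up closes the loop; the symmetric calculation handles the other side. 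For (\ref{item:compatible}), I take $u\Comp n\inv v$ as weak inverse of $u\Comp n v$; then $(u\Comp n v)\Comp n(u\Comp n\inv v)=u\Comp n(v\Comp n\inv v)\eqv u\Comp n\unit s=\unit{u\Comp n s}$ by compatibility and a unit law, and similarly for $(u\Comp n\inv v)\Comp n(u\Comp n v)$; the postcomposition case is strictly parallel.

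The main obstacle is formalising the mutual/simultaneous coinduction cleanly: each of the four clauses, when unfolded one step, invokes the other clauses at dimension $n{+}1$, so the proof cannot proceed by plain induction on dimension and there is no well-founded base case. The cleanest way to organise this is to define a single ``candidate reversibility'' relation---morally a bisimulation---consisting of all quadruples $(u,\inv u,\alpha,\beta)$ that can be manufactured by the four clauses from previously given reversibility data, and to verify by one combinatorial check per clause that this candidate is closed under the coinductive unfolding; the existence of the required infinite tree of higher-dimensional witnesses is then immediate from the coinductive principle. A subsidiary technical point is that in clauses (\ref{item:trans}) and (\ref{item:compatible}) one must chain several $\omega$-equivalences through transitivity and compatibility at the next dimension, so the four clauses genuinely need to be proved in one package rather than sequentially.
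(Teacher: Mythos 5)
Your proposal is correct in substance and, for clauses~(\ref{item:reflex})--(\ref{item:trans}), follows essentially the same route as the paper: $\unit x$ is its own weak inverse with the coinductive hypothesis closing the loop, symmetry is immediate from the definition, and transitivity takes $\inv v\Comp n\inv u$ as the candidate inverse of $u\Comp n v$, reduces $u\Comp n v\Comp n\inv v\Comp n\inv u$ to $u\Comp n\inv u$ via compatibility, and chains the resulting equivalences by transitivity one dimension up. The genuine divergence is clause~(\ref{item:compatible}). The paper does not whisker by hand: it regards $v : s\rto t$ as a reversible cell $\Sht v : \Sht s\rto\Sht t$ of $\HOM y z$ and applies the precomposition $\omega$-functor $u\act{-} : \HOM y z\to\HOM x z$ of Section~\ref{subsec:shift} together with Lemma~\ref{lemma:functor}, so that the coinduction for this clause is entirely absorbed into the already-proved fact that $\omega$-functors preserve reversibility. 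You instead compute directly, taking $u\Comp n\inv v$ as the inverse of $u\Comp n v$ and verifying the defining equations from interchange, the unit laws and the coinductive hypothesis for~(\ref{item:compatible}) one dimension up. Both routes work; the paper's keeps clause~(\ref{item:compatible}) a one-liner at the cost of the shift machinery, while yours is self-contained but must carry~(\ref{item:compatible}) inside the simultaneous coinductive package --- and your explicit ``candidate relation closed under one step of unfolding'' formulation is a more rigorous rendering of what the paper compresses into ``by coinduction''. One index slip should be repaired: in clause~(\ref{item:compatible}) the cells $s,t$ have some dimension $m>n$, so $u\Comp n v$ and $u\Comp n\inv v$ are $m{+}1$-cells and the weak-inverse equations compose them along $\Comp m$, not $\Comp n$; the identity $(u\Comp n v)\Comp m(u\Comp n\inv v)=u\Comp n(v\Comp m\inv v)$ is then an instance of interchange together with $\Unit {m{+}1} u\Comp m\Unit {m{+}1} u=\Unit {m{+}1} u$, not mere associativity. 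With that correction the argument goes through.
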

\begin{proof}
For~(\ref{item:reflex}), the proof is by coinduction, whereas~(\ref{item:sym}) follows immediately from the definition. Let $x$, $y$, $z$, $u$, $v$, $s$ and $t$ as in~(\ref{item:compatible}), and consider $f$, the precomposition $\omega$-functor $u\act{-}:\HOM y z\to \HOM x z$ (Section~\ref{subsec:shift}). As $v:s\rto t$, we easily get $\Sht v:\Sht s\rto\Sht t$, so that Lemma~\ref{lemma:functor} applies  and $f\, \Sht v:\Sht s\rto\Sht t$, whence  $u \Comp n v : u \Comp n s \rto u \Comp n t$. The same holds for postcomposition. As for~(\ref{item:trans}), suppose that $u:x\rto y$ and $v:y\rto z$. By definition, there are $n{+}1$-cells $\inv u:y\to u$ and $\inv v:z\to y$ together with reversible $n{+}2$-cells $w:u\Comp n \inv u\rto \unit x$ and $t:v\Comp n\inv v\rto\unit y$. By using the compatibility property~(\ref{item:compatible}) just established, we get $u\Comp n v\Comp n\inv v\Comp n\inv u\eqv u\Comp n\unit y\Comp n \inv u= u\Comp n\inv u$. Also $u\Comp n\inv u\eqv\unit x$. By coinduction, transitivity holds in dimension $n{+}1$, whence $u\Comp n v\Comp n\inv v\Comp n\inv u\eqv \unit x$. Likewise $\inv v\Comp n\inv u\Comp n u\Comp n v\eqv \unit z$. Therefore $u\Comp n v:x\rto z$.
\end{proof}
\sssec
There is a convenient notion of weak uniqueness, related to $\omega$-equivalence.
\begin{defn}\label{def:weakunique}
A condition $\cal C$ defines a \emph{weakly unique} cell $u : x \to y$ if we have $u \eqv u'$ for any other $u' : x \to y$ satisfying $\cal C$.
\end{defn}

A less immediate, but crucial result is the following ``weak division'' property.
\begin{lemma}\label{lemma:weakdiv}
Any reversible $1$-cell $u : x \rto y$ satisfies the \emph{left division property}:
\begin{itemizeminus}
\item For any $1$-cell $w : x \to z$, there is a weakly unique $1$-cell $v : y \to z$ such that $u \Comp 0 v \eqv w$.
\item For any $1$-cells $s, t : y \to z$ and for any $2$-cell $w : u \Comp 0 s \to u \Comp 0 t$, there is a weakly unique $2$-cell $v : s \to t$ such that $u \Comp 0 v \eqv w$.
\item More generally, for all $n > 0$, for any parallel $n$-cells $s, t : y \To 0 z$ and for any $n{+}1$-cell $w : u \Comp 0 s \to u \Comp 0 t$, there is a weakly unique $n{+}1$-cell $v : s \to t$ such that $u \Comp 0 v \eqv w$.
\end{itemizeminus}
Similarly, $u : x \rto y$ satisfies the \emph{right division property}.
\end{lemma}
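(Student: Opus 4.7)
My plan is to establish the three items by dimension, using the chosen reversibility witnesses: a weak inverse $\inv u : y \to x$ together with reversible $2$-cells $\alpha : \inv u \Comp 0 u \rto \unit y$ and $\beta : u \Comp 0 \inv u \rto \unit x$. In all cases, weak uniqueness will reduce to a \emph{left cancellation} lemma for $u$: if $u \Comp 0 v_1 \eqv u \Comp 0 v_2$, then precomposing with $\inv u$, using compatibility of $\eqv$ with composition (Proposition~\ref{prop:congruence}(\ref{item:compatible})), and using $\inv u \Comp 0 u \eqv \unit y$ together with the identity law, one obtains $v_1 \eqv \inv u \Comp 0 u \Comp 0 v_1 \eqv \inv u \Comp 0 u \Comp 0 v_2 \eqv v_2$.

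For the $1$-cell case I would set $v := \inv u \Comp 0 w : y \to z$ and compute $u \Comp 0 v = (u \Comp 0 \inv u) \Comp 0 w \eqv \unit x \Comp 0 w = w$, using compatibility and $u \Comp 0 \inv u \eqv \unit x$; uniqueness is the cancellation above. For the higher-dimensional case, given parallel $n$-cells $s, t : y \To 0 z$ and $w : u \Comp 0 s \to u \Comp 0 t$, compatibility of $\eqv$ with composition applied to $\alpha$ produces reversible $(n{+}1)$-cells $\gamma_s : \inv u \Comp 0 u \Comp 0 s \rto s$ and $\gamma_t : \inv u \Comp 0 u \Comp 0 t \rto t$, and I would then set
\[
v \; := \; \inv{\gamma_s} \Comp n (\inv u \Comp 0 w) \Comp n \gamma_t \; : \; s \to t.
\]
To verify $u \Comp 0 v \eqv w$, I would expand $u \Comp 0 v$ using bifunctoriality and interchange, and substitute the conjugate of $w$ obtained from $\beta$: namely $u \Comp 0 \inv u \Comp 0 w \eqv \rho_s \Comp n w \Comp n \inv{\rho_t}$, where $\rho_s, \rho_t$ are the reversible $(n{+}1)$-cells produced from $\beta$ by the same compatibility argument.

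The hard part is this last substitution, which reduces to showing that the parallel reversible $(n{+}1)$-cells $u \Comp 0 \gamma_s$ and $\rho_s$ (both from $u \Comp 0 \inv u \Comp 0 u \Comp 0 s$ to $u \Comp 0 s$, but built respectively from $\alpha$ and from $\beta$) are $\eqv$-equivalent, and symmetrically for $t$. This is a weak triangle identity between the chosen reversibility witnesses, not postulated but provable by coinduction on the reversibility data; once it is in hand, the outer corrections telescope via reversibility and compatibility of $\eqv$ to yield $u \Comp 0 v \eqv w$. Right division is proved by the symmetric argument, swapping precomposition for postcomposition and exchanging the roles of $\alpha$ and $\beta$.
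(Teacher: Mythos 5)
Your first bullet (set $v=\inv u\Comp 0 w$ and cancel by $\inv u$ for uniqueness) is correct and is what the paper does. The higher-dimensional cases, however, contain two genuine gaps. The first is a typing error: for $n\geq 2$ the $n$-cells $\inv u\Comp 0 u\Comp 0 s$ and $s$ are not parallel --- their $(n{-}1)$-sources are $(\inv u\Comp 0 u)\Comp 0\sce{n{-}1}(s)$ and $\sce{n{-}1}(s)$, which differ unless $\inv u\Comp 0 u=\unit y$ on the nose --- so the reversible $(n{+}1)$-cell $\gamma_s:\inv u\Comp 0 u\Comp 0 s\rto s$ you conjugate by does not exist, and $v:=\inv{\gamma_s}\Comp n(\inv u\Comp 0 w)\Comp n\gamma_t$ is ill-formed. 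The same problem defeats your uniqueness scheme as soon as $n\geq 1$: $v_1$ and $\inv u\Comp 0 u\Comp 0 v_1$ are not parallel, so the chain $v_1\eqv\inv u\Comp 0 u\Comp 0 v_1\eqv\inv u\Comp 0 u\Comp 0 v_2\eqv v_2$ is meaningless above dimension~$1$.

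The second gap is that the ``weak triangle identity'' $u\Comp 0\alpha\eqv\beta\Comp 0 u$ on which your verification rests is simply false for arbitrary reversibility witnesses, so no coinduction will prove it. Consider the $2$-category with one object $x$, only the identity $1$-cell $\unit x$, and a nonabelian group $G$ of $2$-cells, viewed as an $\omega$-category (so $\eqv$ on $2$-cells is equality): with $u=\inv u=\unit x$, $\alpha=\theta\neq\mathrm{id}$ and $\beta=\mathrm{id}$, one has $u\Comp 0\alpha=\theta\neq\mathrm{id}=\beta\Comp 0 u$, and your $v=\theta^{-1}\Comp 1 w\Comp 1\theta$ fails to satisfy $u\Comp 0 v\eqv w$ whenever $w$ and $\theta$ do not commute (the weakly unique answer there is $v=w$). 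Repairing this would require first correcting $\alpha$ so that a triangle identity holds up to $\eqv$, an argument you have not supplied. The paper avoids both difficulties: it keeps a single witness $r:\inv u\Comp 0 u\rto\unit y$, uses the interchange identity $(r\Comp 0 s)\Comp 1 v=(\inv u\Comp 0 u\Comp 0 v)\Comp 1(r\Comp 0 t)$ to translate the condition $u\Comp 0 v\eqv w$ into a division problem for the reversible $2$-cell $r\Comp 0 s$ --- which, via the shift construction, is a reversible $1$-cell of $\HOM yz$ to which the first bullet applies --- and then treats general $n$ by induction on this scheme; no triangle identity is needed.
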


In fact, this also applies to any reversible $2$-cell $u : x \rto y$, seen as a reversible $1$-cell in the $\omega$-category $\HOM {\Sce u} {\Tge u}$.

\begin{proof} We have a weak inverse $\inv u : y \rto x$ and some reversible 2-cell $r : \inv u \Comp 0 u \rto \unit y$.
\begin{itemizeminus}
\item In the first case, we have $u \Comp 0 v \eqv w$ if and only if $v \eqv \inv u \Comp 0 w$. 
\item In the second case, $u \Comp 0 v \eqv w$ implies $(r \Comp 0 s) \Comp 1 v = (\inv u \Comp 0 u \Comp 0 v) \Comp 1 (r \Comp 0 t) \eqv (\inv u \Comp 0 w) \Comp 1 (r \Comp 0 t)$ by interchange and compatibility. By left division by $r \Comp 0 s$ (first case), this condition defines a weakly unique $v$. Hence, we get weak uniqueness for left division by $u$. Moreover, this condition implies $\inv u \Comp 0 u \Comp 0 v \eqv \inv u \Comp 0 w$ by right division by $r \Comp 0 t$ (first case), from which we get $u \Comp 0 v \eqv w$ by weak uniqueness applied to $\inv u$.
\item The general case (for left and right division) is proved in the same way by induction on $n$.
\end{itemizeminus}
\end{proof}

\subsection{$\omega$-Weak equivalences}\label{subsec:weq}

If we replace equality by $\omega$-equivalence in the definition of $\I$-injectives, we get \weqs.

\begin{defn}\label{def:weq}
An $\omega$-functor $f : X \to Y$ is an \weq{} whenever it satisfies the following conditions:
\begin{enumerateroman}
\item for any 0-cell $y$ in $Y$, there is a 0-cell $x$ in $X$ such that $f \, x \eqv y$;
\item for any $n$-cells $x \para x'$ in $X$ and for any $v : f \, x \to f \, x'$ in $Y$, there is $u : x \to x'$ in $X$ such that $f \, u \eqv v$.
\end{enumerateroman}
We write $\Weq$ for the class of \weqs.
\end{defn}

\begin{rem}\label{rem:S2}
As equality implies $\omega$-equivalence (Proposition~\ref{prop:congruence}), we have 
\begin{displaymath}
  \Tfb \subseteq \Weq,
\end{displaymath}
which is exactly condition~\stwo of Theorem~\ref{thm:smith}.
\end{rem}

\sssec
We first remark that $\omega$-equivalences are {\em weakly injective}, in the sense of the following Lemma.
\begin{lemma}\label{lemma:weakinj}
If $f : X \to Y$ is in $\Weq$, then $x \eqv x'$ for any $x \para x'$ in $X$ such that $f \, x \eqv f \, x'$ in $Y$.
\end{lemma}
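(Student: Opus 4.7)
\textbf{Proof plan for Lemma~\ref{lemma:weakinj}.}

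The plan is to prove the statement coinductively, reflecting the coinductive definition of $\eqv$ and of reversibility. Given $n$-cells $x\para x'$ in $X$ with $f\,x\eqv f\,x'$, pick a reversible witness $v:f\,x\rto f\,x'$, together with a weak inverse $\inv v:f\,x'\to f\,x$. Using the weak equivalence property of $f$ (Definition~\ref{def:weq}(ii)) twice, lift $v$ and $\inv v$ to $\omega$-functors: there exist $(n{+}1)$-cells $u:x\to x'$ and $u':x'\to x$ in $X$ with $f\,u\eqv v$ and $f\,u'\eqv \inv v$. My candidate for an $\omega$-equivalence between $x$ and $x'$ is the cell $u$, with $u'$ as prospective weak inverse.

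Next I would verify that $u$ is indeed reversible. Since $v$ is reversible, there are reversible $(n{+}2)$-cells witnessing $v\Comp n\inv v\eqv \unit{f\,x}$ and $\inv v\Comp n v\eqv \unit{f\,x'}$. Combining $f\,u\eqv v$ and $f\,u'\eqv\inv v$ with compatibility of $\eqv$ under composition (Proposition~\ref{prop:congruence}(\ref{item:compatible})), together with transitivity (Proposition~\ref{prop:congruence}(\ref{item:trans})), I obtain
\[
f(u\Comp n u')\;=\;f\,u\Comp n f\,u'\;\eqv\; v\Comp n\inv v\;\eqv\;\unit{f\,x}\;=\;f(\unit x),
\]
and symmetrically $f(u'\Comp n u)\eqv f(\unit{x'})$. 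Thus the two pairs of parallel $(n{+}1)$-cells $(u\Comp n u',\unit x)$ and $(u'\Comp n u,\unit{x'})$ satisfy the hypothesis of the lemma one dimension higher.

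The heart of the argument, and the main obstacle, is that this does not close by ordinary induction on dimension — we are going \emph{up} in dimension, not down. The natural formalization is coinduction: let $R$ be the relation on parallel cells defined by $x\, R\, x'$ iff $f\,x\eqv f\,x'$, and let $R'$ be the relation on parallel $(n{+}1)$-cells defined by $u\, R'\, u'$ iff $f\,u\eqv f\,u'$ and $u,u'$ are parallel. What the construction above shows is that whenever $x\, R\, x'$, there exist lifts $u$ and $u'$ such that $u\Comp n u'\, R'\, \unit x$ and $u'\Comp n u\, R'\, \unit{x'}$; moreover the same construction applied at the next level produces the required $(n{+}2)$-dimensional reversible witnesses of these equivalences, and so on. By coinduction on the definitions of $\eqv$ and of reversibility, this suffices to conclude that $u:x\rto x'$ is reversible, hence $x\eqv x'$, as required.
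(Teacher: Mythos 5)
Your proposal is correct and follows essentially the same route as the paper's own proof: lift the reversible witness and its weak inverse along $f$, use Proposition~\ref{prop:congruence} to show the composites of the lifts are sent to cells $\omega$-equivalent to images of identities, and close the argument by coinduction one dimension higher. Your explicit formalization of the coinductive step via the relation on parallel cells defined by $x\,R\,x'$ iff $f\,x\eqv f\,x'$ is a welcome elaboration of what the paper leaves implicit in the phrase ``by coinduction.''
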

\begin{proof}
Let $f\in\Weq$ and $x$, $x'$ parallel $n$-cells such that $f\, x\eqv f\, x'$. There are $n{+}1$-cells $u:f\, x\to f\, x'$ and $\inv u:f\, x'\to f\, x$ such that $u\Comp n \inv u\eqv \unit{f\, x}$ and $\inv u\Comp n u\eqv \unit{f\, x'}$. Because $f$ is a \weq, we get $n{+}1$-cells $v:x\to x'$ and $\inv v:x'\to x$ such that $f\, v\eqv u$ and $f\, \inv v\eqv \inv u$. By using Proposition~\ref{prop:congruence},(\ref{item:trans}) and~(\ref{item:compatible}), and the preservation of compositions and units by $f$,
\begin{eqnarray*}
  f(v\Comp n \inv v) & \eqv & u\Comp n \inv u\\
                      & \eqv & f(\unit x)
\end{eqnarray*}
By coinduction, $v\Comp n\inv v\eqv \unit x$ and likewise $\inv v\Comp n v\eqv \unit{x'}$, whence $x\eqv x'$.
\end{proof}
\sssec
The ``3 for 2'' property states that whenever two $\omega$-functors out of $f$, $g$ and $h=g\circ f$ are \weqs, then so is the third. So there are really three statements, that we shall address separately.
\begin{lemma}\label{lemma:compos}
  Let $f:X\to Y$ and $g:Y\to Z$ be \weqs. Then $g\circ f:X\to Z$
  is in $\Weq$.

\end{lemma}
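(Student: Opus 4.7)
The plan is a direct unfolding of Definition~\ref{def:weq}, using essentially only two ingredients already established: that $\omega$-functors preserve $\omega$-equivalence (Lemma~\ref{lemma:functor}), and that $\eqv$ is transitive (Proposition~\ref{prop:congruence}, item~(\ref{item:trans})).

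First I would handle the 0-cell condition. Given a 0-cell $z$ in $Z$, I apply the weak equivalence property of $g$ to obtain a 0-cell $y$ in $Y$ with $g\,y \eqv z$, then apply the weak equivalence property of $f$ to obtain a 0-cell $x$ in $X$ with $f\,x \eqv y$. By Lemma~\ref{lemma:functor}, $g$ preserves $\eqv$, so $g(f\,x) \eqv g\,y$, and transitivity gives $(g\circ f)\,x \eqv z$.

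Next I would handle the higher-dimensional lifting condition. Given parallel $n$-cells $x \para x'$ in $X$ and a cell $w : g(f\,x) \to g(f\,x')$ in $Z$, I first apply the weak equivalence property of $g$ to the parallel pair $f\,x \para f\,x'$ and the cell $w$, obtaining a cell $v : f\,x \to f\,x'$ in $Y$ with $g\,v \eqv w$. Then I apply the weak equivalence property of $f$ to $x \para x'$ and to $v$, obtaining a cell $u : x \to x'$ in $X$ with $f\,u \eqv v$. Applying Lemma~\ref{lemma:functor} to $g$ gives $g(f\,u) \eqv g\,v$, and transitivity then yields $(g \circ f)\,u \eqv w$.

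There is no real obstacle here: the argument is purely a matter of chaining the two witnesses and invoking preservation and transitivity of $\eqv$. The harder halves of the 3-for-2 property (recovering a weak equivalence from $f$ and $g\circ f$, or from $g$ and $g\circ f$) will be the substantial ones, and the paper announces that one of them requires the new machinery of reversible cylinders; the present lemma is just the warm-up.
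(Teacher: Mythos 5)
Your argument is correct and coincides with the paper's own proof: both handle the $0$-cell condition and the lifting condition by chaining the witnesses provided by $g$ and then $f$, and then conclude via preservation of $\eqv$ under $\omega$-functors (Lemma~\ref{lemma:functor}) and transitivity of $\eqv$ (Proposition~\ref{prop:congruence}). No gap.
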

\begin{proof}
  Suppose that $f:X\to Y$, $g:Y\to Z$ are \weqs{} and let $h=g\circ f$. If $z$ is a $0$-cell in $Z$, there is a $0$-cell $y$ in $Y$ such that $g\, y\eqv z$, and a $0$-cell $x$ in $X$ such that $f\, x\eqv y$. By Lemma~\ref{lemma:functor}, $h\, x\eqv g\, y$, and by Proposition~\ref{prop:congruence},(\ref{item:trans}), $h\, x\eqv z$. Now, let $x$, $x'$ be two parallel $n$-cells in $X$ and $w:h\, x\to h\, x'$ be an $n{+}1$-cell in $Z$. There is a $v:f\, x\to f\, x'$ such that $g\, v\eqv w$ and a $u:x\to x'$ such that $f\, u\eqv v$. By Lemma~\ref{lemma:functor} and Proposition~\ref{prop:congruence},(\ref{item:trans}) again, we get $h\, u\eqv w$ and we are done. 
\end{proof}

\begin{lemma}\label{lemma:rightinv}
  Let $f:X\to Y$, $g:Y\to Z$ be $\omega$-functors and suppose that $g$
  and $g\circ f$ are \weqs. Then $f$ is in $\Weq$.

\end{lemma}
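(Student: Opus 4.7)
The plan is to verify the two clauses of Definition~\ref{def:weq} for $f$ directly, using as the main technical input Lemma~\ref{lemma:weakinj}, which says that any \weq{} reflects $\omega$-equivalence on parallel cells. The key observation is that the \weq{} $g$ can be used to ``pull back'' $\omega$-equivalences from $Z$ to $Y$, so whenever $g\circ f$ provides us a cell in $X$ whose image under $g\circ f$ is $\omega$-equivalent to some target cell in $Z$, we automatically get that its image under $f$ is $\omega$-equivalent to the corresponding cell in $Y$.

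For the first clause, given a $0$-cell $y$ in $Y$, consider $g\,y$ in $Z$. Since $g\circ f\in\Weq$, there is a $0$-cell $x$ in $X$ with $(g\circ f)\,x\eqv g\,y$. The cells $f\,x$ and $y$ are trivially parallel (both are $0$-cells), and $g(f\,x)\eqv g\,y$ in $Y$, so Lemma~\ref{lemma:weakinj} applied to $g$ yields $f\,x\eqv y$, as required.

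For the second clause, let $x\para x'$ be parallel $n$-cells in $X$ and $v:f\,x\to f\,x'$ an $(n{+}1)$-cell in $Y$. Apply $g$ to obtain $g\,v:g(f\,x)\to g(f\,x')$ in $Z$. Because $g\circ f$ is an \weq, there exists an $(n{+}1)$-cell $u:x\to x'$ in $X$ with $(g\circ f)\,u\eqv g\,v$. The cells $f\,u$ and $v$ are parallel in $Y$ (both have source $f\,x$ and target $f\,x'$), and $g(f\,u)\eqv g\,v$, so Lemma~\ref{lemma:weakinj} applied to $g$ gives $f\,u\eqv v$.

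There is essentially no obstacle here: once Lemma~\ref{lemma:weakinj} is in hand, the argument is a two-line diagram chase in each dimension, and the two conditions of Definition~\ref{def:weq} fall out immediately. The only subtlety worth mentioning is checking parallelism in each step before invoking weak injectivity, but this is automatic from the shape of the lifting problem in $Y$.
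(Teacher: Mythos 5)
Your proof is correct and is essentially identical to the paper's: both clauses are verified by pushing the lifting problem down to $Z$ via $g$, solving it there using $g\circ f\in\Weq$, and then pulling the resulting $\omega$-equivalence back along $g$ with Lemma~\ref{lemma:weakinj}. (Only a cosmetic slip: in your first clause the equivalence $g(f\,x)\eqv g\,y$ lives in $Z$, not in $Y$.)
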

\begin{proof}
  Let $f$, $g$ and $h=g\circ f$ such that $g\in\Weq$ and $h\in \Weq$. Let $y$ be a $0$-cell in $Y$, and $z=g\, y$. There is a $0$-cell $x$ in $X$ such that $h\, x\eqv z$. By Lemma~\ref{lemma:weakinj}, $f\, x\eqv y$. Likewise, let $x$, $x'$ be parallel $n$-cells in $X$ and $v:f\, x\to f\, x'$ an $n{+}1$-cell in $Y$. We get $g\, v:h\, x\to h\, x'$, therefore there is an $n{+}1$-cell $u:x\to x'$ in $X$ such that $h\, u\eqv g\, v$. By Lemma~\ref{lemma:weakinj}, $f\, u\eqv v$ and we are done.
\end{proof}

The remaining part of the 3-for-2 property for $\Weq$ is significantly
harder to show and will be addressed in Section~\ref{subsec:gluing}.
\sssec
\begin{lemma}\label{lemma:retracttransf}
The class $\Weq$ is closed under retract and transfinite composition.
\end{lemma}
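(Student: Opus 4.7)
For closure under retracts, suppose $g:A\to B$ is a retract of some $f:X\to Y$ in $\Weq$, witnessed by $\omega$-functors $i:A\to X$, $r:X\to A$, $j:B\to Y$, $s:Y\to B$ with $r\circ i=\id_A$, $s\circ j=\id_B$, $f\circ i=j\circ g$ and $s\circ f=g\circ r$. I would verify Definition~\ref{def:weq}(i) for $g$ by lifting a $0$-cell $b$ of $B$ to $jb\in Y_0$, applying~(i) for $f$ to obtain some $x\in X_0$ with $fx\eqv jb$, and concluding $g(rx)=s(fx)\eqv s(jb)=b$ using the fact that $\omega$-functors preserve $\eqv$ (Lemma~\ref{lemma:functor}). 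Condition~(ii) is handled symmetrically: given parallel $n$-cells $a\para a'$ in $A$ and $v:ga\to ga'$ in $B$, apply~(ii) for $f$ to the parallel pair $ia,ia'$ in $X$ and the cell $jv:f(ia)\to f(ia')$ to obtain $u:ia\to ia'$ with $fu\eqv jv$; then $ru:a\to a'$ and $g(ru)=s(fu)\eqv s(jv)=v$.

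For closure under transfinite composition, consider an ordinal $\lambda$ and a colimit-continuous sequence $(X_\alpha)_{\alpha\le\lambda}$ with canonical maps $\varphi_\alpha:X_0\to X_\alpha$, in which each successor map $X_\alpha\to X_{\alpha+1}$ belongs to $\Weq$. I would establish $\varphi_\alpha\in\Weq$ for every $\alpha\le\lambda$ by transfinite induction. The base case is trivial, and the successor case follows from Lemma~\ref{lemma:compos} applied to $\varphi_{\alpha+1}=(X_\alpha\to X_{\alpha+1})\circ\varphi_\alpha$.

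The main obstacle is the limit step, and this is where the local finite presentability of $\ocat$ (Proposition~\ref{prop:locfp}) becomes indispensable. Recall that an $n$-cell of an $\omega$-category $C$ is the same datum as an $\omega$-functor $\OO n\to C$, and each $\OO n$ is finitely presentable. At a limit $\mu$, any $0$-cell $y\in X_\mu$ therefore lifts to some $y_\alpha\in X_\alpha$ with $\alpha<\mu$; the induction hypothesis for $\varphi_\alpha$ yields $x_0\in X_0$ with $\varphi_\alpha(x_0)\eqv y_\alpha$, and pushing this forward along $X_\alpha\to X_\mu$ (using Lemma~\ref{lemma:functor} again) produces the required preimage for Definition~\ref{def:weq}(i). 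For condition~(ii), given $x\para x'$ in $X_0$ and $v:\varphi_\mu(x)\to\varphi_\mu(x')$ in $X_\mu$, I would first lift $v$ to some $v_\beta$ in $X_\beta$ with $\beta<\mu$, then use finite presentability of $\OO n$ to align the source and target of $v_\beta$ with $\varphi_\beta(x)$ and $\varphi_\beta(x')$ at a common stage $\beta'<\mu$ (where they must coincide because their images in $X_\mu$ do), apply the induction hypothesis to $\varphi_{\beta'}$ to lift $v_{\beta'}$ up to $\omega$-equivalence, and push the resulting cell back to $X_\mu$. The only delicate point is precisely this synchronisation of source and target at a single finite stage, which is a direct consequence of finite presentability.
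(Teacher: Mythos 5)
Your proposal is correct and follows essentially the same route as the paper: the retract case is the direct diagram chase via Lemma~\ref{lemma:functor}, and the transfinite case is the same transfinite induction whose limit step lifts cells to an earlier stage using the finite presentability of the globes $\OO n$ (equivalently, preservation of the filtered colimit). Your explicit synchronisation of the source and target of the lifted cell at a common stage is a detail the paper's proof glosses over, but the underlying argument is identical.
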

\begin{proof}
 The closure under retracts follows immediately from the definition, by using Lemma~\ref{lemma:functor}.

As for the closure under transfinite composition, let $\alpha>0$ be an ordinal, viewed as a category with a unique morphism $\beta\to\gamma$ for each pair $\beta\leq\gamma$ of ordinals $<\alpha$, and $X:\alpha\to\ocat$ a functor, preserving colimits. We denote by $w_\beta^\gamma$ the morphism $X(\beta\to\gamma):X(\beta)\to X(\gamma)$, and by $(\overline X,w_\beta)$ the colimit of the directed system $(X(\beta),w_\beta^\gamma)$. Suppose that each $w_\beta^{\beta+1}$ belongs to $\Weq$. We need to show that $w_0:X(0)\to\overline X$ is still a \weq. We first establish that for each $\beta<\alpha$, $w_0^{\beta}\in\Weq$, by induction on $\beta$:
 \begin{itemizeminus}
 \item if $\beta=0$, $w_0^\beta$ is the identity on $X(0)$, thus belongs to $\Weq$;
\item if $\beta$ is a successor ordinal, $\beta=\gamma+1$ and $w_0^\beta=w_\gamma^{\gamma+1}\circ w_0^\gamma$. By induction, $w_0^\gamma\in\Weq$, and by hypothesis $w_\gamma^{\gamma+1}\in\Weq$, hence the result, by composition;
\item if $\beta$ is a limit ordinal, $\beta=\sup_{\gamma<\beta}\gamma$. Let $n>0$, $(x,y)$ a pair of parallel $n{-}1$ cells in $X(0)$ and $u:w_0^\beta(x)\to w_0^\beta(y)$ an $n$-cell in $X(\beta)$. Because $X$ preserves colimits, there is already a $\gamma<\beta$ and an $n$-cell $v$ in $X(\gamma)$ such that $v:w_0^\gamma(x)\to w_0^\gamma(y)$ and $w_\gamma^\beta(v)=u$. By the induction hypothesis, $w_0^\gamma$ is a \weq, and there is a $z:x\to y$ in $X(0)$ such that $w_0^\gamma(z)\eqv v$. By composing with $w_\gamma^\beta$, we get $w_0^\beta(z)\eqv u$. The same argument applies to the case $n=0$, so that $w_0^\beta\in\Weq$.
 \end{itemizeminus}
Now we complete the proof by induction on $\alpha$ itself: if $\alpha$ is a successor ordinal, then $\alpha=\beta+1$ and $w_0$ is $w_0^\beta$, hence belongs to $\Weq$, as we just proved. If $\alpha$ is a limit ordinal, we reproduce the argument of the limit case above, using again the fact that $w_0^\beta$ is a \weq{} for any $\beta<\alpha$.
\end{proof}

\begin{corollary}\label{cor:retracttrans}
$\Cof \cap \Weq$ is closed under retract and transfinite composition.
\end{corollary}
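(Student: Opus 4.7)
The plan is that this corollary is essentially immediate from combining Lemma~\ref{lemma:retracttransf} with the corresponding closure properties of $\Cof$ recorded in Proposition~\ref{prop:closure}. No new construction on $\omega$-categories is needed; the argument is a purely formal intersection of closure properties.

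First I would recall that by Proposition~\ref{prop:closure}, $\Cof$ is closed under retract and transfinite composition, and by Lemma~\ref{lemma:retracttransf} the same holds for $\Weq$. The claim will then follow from the general observation that if two classes $\mathcal{A}$ and $\mathcal{B}$ of morphisms are each closed under retracts (resp.\ transfinite compositions), then so is $\mathcal{A} \cap \mathcal{B}$.

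For the retract part, I would take a morphism $f$ that is a retract of some $g \in \Cof \cap \Weq$, noting that $g \in \Cof$ forces $f \in \Cof$ and that $g \in \Weq$ forces $f \in \Weq$, hence $f \in \Cof \cap \Weq$. For the transfinite composition part, I would consider an ordinal $\alpha > 0$ and a colimit-preserving functor $X : \alpha \to \ocat$ such that each $w_\beta^{\beta+1} : X(\beta) \to X(\beta{+}1)$ lies in $\Cof \cap \Weq$, then observe that the induced morphism $w_0 : X(0) \to \overline X$ belongs to $\Cof$ by Proposition~\ref{prop:closure} and to $\Weq$ by Lemma~\ref{lemma:retracttransf}, hence to their intersection.

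Since both halves reduce directly to already-established statements, there is no real obstacle: the only thing to watch is that we are applying each closure property to the same diagram or retract witness, which is automatic. The corollary can therefore be dispatched in a few lines.
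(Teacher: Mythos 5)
Your proof is correct and matches the paper's (implicit) argument: the paper states this corollary without proof precisely because it follows formally from Proposition~\ref{prop:closure} for $\Cof$ and Lemma~\ref{lemma:retracttransf} for $\Weq$, with the intersection inheriting both closure properties exactly as you describe.
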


\subsection{Cylinders}\label{subsec:connect}

The proofs of condition~\stwo, part of~\sone and~\sthree were directly based on our definitions of generating cofibrations and \weqs. As for the remaining points, we shall need a new construction: to each $\omega$-category $X$ we associate an $\omega$-category $\Cnx X$ whose cells are the {\em reversible cylinders} of $X$. The correspondence $\cnx$ turns out to be functorial and endowed with natural transformations from and to the identity functor. Reversible cylinders are in fact cylinders in the sense of~\cite{metayer:respol} and~\cite{lafontmetayer:polrhm}, satifying an additional reversibility condition. In the present work, ``cylinder'' means ``reversible cylinder'', as the general case will not occur. 

\begin{defn}\label{def:cylinder}
By induction on $n$, we define the notion of $n$-\emph{cylinder} $U : x \cto y$ between $n$-cells $x$ and $y$ in some $\omega$-category:
\begin{itemizeminus}
\item a 0-cylinder $U : x \cto y$ in $X$ is given by a reversible $1$-cell $\Pal U : x \rto y$;
\item if $n > 0$, an $n$-cylinder $U : x \cto y$ in $X$ is given by two reversible 1-cells $\Sce U : \Sce x \rto \Sce y$ and $\Tge U : \Tge x \rto \Tge y$, together with some $n{-}1$-cylinder $\Sht U : \Sht x \act \Tge U \cto \Sce U \act \Sht y$ in the $\omega$-category $\HOM {\Sce x} {\Tge y}$.
\end{itemizeminus}
If $U : x \cto y$ is an $n$-cylinder, we write $\Top\, U$ and $\Bot\, U$ for the $n$-cells $x$ and $y$.
\end{defn}

\begin{center}
$
\xygraph{ 
!{<0cm,0cm>;<1cm,0cm>:<0cm,1cm>::} 
%
%
!{(0,0) }*+{x}="x"
!{(3,0) }*+{x^\flat}="xflat"
!{(5,0)}*+{x^\sharp}="xsharp"
!{(0,-2) }*+{y}="y"
!{(3,-2) }*+{y^\flat}="yflat"
!{(5,-2)}*+{y^\sharp}="ysharp"
%
%
"x":"y"_{U^\natural}
"xflat":"yflat"_{U^\flat}
"xflat":"xsharp"^x
"yflat":"ysharp"_y
"xsharp":"ysharp"^{U^\sharp}
"xsharp":"yflat"_{U^\natural}
}
$
\end{center}

We also write $\Top_X\, U$ and $\Bot_X\, U$ to emphasize the fact that $U$ is an $n$-cylinder in the $\omega$-category $X$. The next step is to show that $n$-cylinders in $X$ are the $n$-cells of a globular set.

\begin{defn}\label{def:stcyl}
By induction on $n$, we define the \emph{source $n$-cylinder} $U : x \cto x'$ and the \emph{target $n$-cylinder} $V : y \cto y'$ of any $n{+}1$-cylinder $W : z \cto z'$ between $n{+}1$-cells $z : x \to y$ and $z' : x' \to y'$:
\begin{itemizeminus}
\item if $n = 0$, then $\Pal U = \Sce W$ and $\Pal V = \Tge W$;
\item if $n > 0$, then $\Sce U = \Sce V = \Sce W$ and $\Tge U = \Tge V = \Tge W$, whereas the two $n{-}1$-cylinders $\Sht U$ and $\Sht V$ are respectively defined as the source and the target of the $n$-cylinder $\Sht W$ in the $\omega$-category $\HOM {\Sce z} {\Tge{z'}}$.
\end{itemizeminus}
In that case, we write $W : U \to V$ or also $\Cto {W : U \to V} z {z'}$.
\end{defn}

\begin{center}
$
\xygraph{ 
!{<0cm,0cm>;<1.5cm,0cm>:<0cm,.9cm>::} 
!{(0,0) }*+{x^\flat}="xflat"
!{(0,-3) }*+{y^\flat}="yflat"
!{(.1,-2.8) }*+{}="yflatprime" 
!{(.16,-2.71) }*+{}="yflatpprime" 
!{(2.5,0.6)}*+{x^\sharp}="xsharp"
!{(2.44,0.3) }*+{}="xsharpprime" 
!{(2.49,0.4) }*+{}="xsharppprime" 
!{(2.5,-2.4)}*+{y^\sharp}="ysharp"
!{(.8,0.6)}*+{}="x"
!{(1.7,0)}*+{}="y"
!{(.8,-2.4)}*+{}="xp"  
!{(1.7,-3)}*+{}="yp"
!{(1.8,-1.4)}*+{}="domw"  
!{(0.8,-.9)}*+{}="codw"
"xflat":"yflat"_{W^\flat}
"xsharp":"ysharp"^{W^\sharp}
"xflat":@/^.5cm/"xsharp"^(.3)x
"xflat":@/_.55cm/"xsharp"_(.7){y}
"yflat":@{.>}@/^.5cm/"ysharp"^(.3){x^\prime}
"yflat":@/_.55cm/"ysharp"_(.7){y^\prime}
"xsharp":@{.>}@/_.9cm/"yflatprime"_(.7){U^\natural}
"xsharpprime":@/^.9cm/ "yflat"^(.3){V^\natural}
%
"x":"y"^z
"xp":@{.>}"yp"^(.7){z^\prime}
"domw":@{.>}"codw"_{W^\natural}
}
$
\end{center}

\begin{lemma}\label{lemma:boundary}
We have $U \para V$ for any $n{+}1$-cylinder $W : U \to V$. In other words, cylinders form a globular set.
\end{lemma}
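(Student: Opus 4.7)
The plan is to induct on the dimension $k \geq 1$ of the cylinder $W$, showing that for every $k$-cylinder $W$ with source $U$ and target $V$ one has $U \para V$. The case $k = 1$ is trivial, since the source and target of a $1$-cylinder are $0$-cylinders and any two $0$-cells of a globular set are pairwise parallel. The case $k = 2$ is equally direct: $U$ and $V$ are then $1$-cylinders, their source and target $0$-cylinders are determined entirely by $\Sce U$, $\Tge U$ and $\Sce V$, $\Tge V$, and Definition~\ref{def:stcyl} applied to $W$ itself yields $\Sce U = \Sce V = \Sce W$ and $\Tge U = \Tge V = \Tge W$.

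For the inductive step with $k \geq 3$, let $W : z \cto z'$ be a $k$-cylinder between $k$-cells $z : x \to y$ and $z' : x' \to y'$, with source $U : x \cto x'$ and target $V : y \cto y'$. I would unfold Definition~\ref{def:stcyl} one level further: the source $(k-2)$-cylinder of $U$ has $\Sce U$, $\Tge U$ and the source of $\Sht U$ as its $\flat$-, $\sharp$-, and $\natural$-components, with the analogous description for $V$. The equalities $\Sce U = \Sce V$ and $\Tge U = \Tge V$ (again from Definition~\ref{def:stcyl} applied to $W$) handle the first two components at once.

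For the $\natural$-component, the same definition identifies $\Sht U$ and $\Sht V$ with the source and target of the $(k-1)$-cylinder $\Sht W$ in the $\omega$-category $\HOM{\Sce z}{\Tge z'}$. Applying the induction hypothesis to $\Sht W$ then yields $\Sht U \para \Sht V$, so their sources coincide as $(k-3)$-cylinders --- which is precisely the remaining equality. The same argument with ``target'' replacing ``source'' throughout settles the target $(k-2)$-cylinders, so that $U \para V$.

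The only genuine delicacy I anticipate is keeping track of dimensions through the two nested unfoldings of Definition~\ref{def:stcyl}, including the mild change of ambient $\omega$-category when passing from $W$ to $\Sht W$; the essential content is simply that the outer globular identity for $W$ reduces, via the shift construction, to the same identity at dimension $k-1$ for $\Sht W$, which is the induction hypothesis.
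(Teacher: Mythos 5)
Your proof is correct and is precisely the argument the paper intends: the paper's own proof consists of the single line ``By induction on $n$,'' and your unfolding of Definition~\ref{def:stcyl} --- base cases $k=1,2$, then reduction of the inductive step to the cylinder $\Sht W$ in $\HOM{\Sce z}{\Tge{z'}}$ --- is exactly that induction made explicit. (Only a notational quibble: the third component of a positive-dimensional cylinder is the bracketed one, $\Sht{U}$, not the $\natural$-component, which the paper reserves for degenerate and $0$-cylinders.)
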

\begin{proof}
  By induction on $n$.
\end{proof}

Remark that the 0-source $U$ and the 0-target $V$ of an $n{+}1$-cylinder $W$ are given by $\Pal U = \Sce W$ and $\Pal V = \Tge W$. 
\sssec
We now define {\em trivial cylinders}.
\begin{defn}
By induction on $n$, we define the \emph{trivial $n$-cylinder} $\Triv \, x : x \cto x$ for any $n$-cell $x$:
\begin{itemizeminus}
\item if $n = 0$, then $\Pal{(\Triv \, x)} = \unit x$;
\item if $n > 0$, then $\Sce{(\Triv \, x)} = \unit{\Sce x}$ and $\Tge{(\Triv \, x)} = \unit{\Tge x}$, whereas $\Sht{\Triv \, x}$ is the trivial cylinder $\Triv \Sht x$ in $\HOM {\Sce x} {\Tge x}$.
\end{itemizeminus}
\end{defn}
We also write $\Triv_X\, x$ for $\Triv\, x$ to emphasize the fact that $x$ is an $n$-cell of the $\omega$-category $X$.
The following result is a straightforward consequence of the definition.
\begin{lemma}
We have $\Triv \, x \para \Triv \, y$ for any $n$-cells $x \para y$, and $\Triv \, z : \Triv \, x \to \Triv \, y$ for any $z : x \to y$.
\end{lemma}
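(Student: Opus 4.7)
The plan is to reduce both assertions to a single statement, proved by induction on dimension:

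\emph{Key claim.} For every $(n{+}1)$-cell $z : x \to y$, the source $n$-cylinder of $\Triv\,z$ is $\Triv\,x$ and its target $n$-cylinder is $\Triv\,y$.

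This is exactly the second assertion of the lemma. The first assertion will follow: given parallel $n$-cells $x \para y$ with $n \geq 1$, view $x$ itself as an $n$-cell from $\sce{n-1}x$ to $\tge{n-1}x$, and apply the key claim at level $n{-}1$; do the same for $y$. Since $\sce{n-1}x = \sce{n-1}y$ and $\tge{n-1}x = \tge{n-1}y$, the source and target $(n{-}1)$-cylinders of $\Triv\,x$ and of $\Triv\,y$ coincide (they are $\Triv$ of the common boundary cells), hence $\Triv\,x \para \Triv\,y$. For $n = 0$, parallelism in the globular set of cylinders is automatic.

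The key claim is then proved by induction on $n$. For $n = 0$: $\Triv\,z$ is a $1$-cylinder with $\Sce{(\Triv\,z)} = \unit x$ and $\Tge{(\Triv\,z)} = \unit y$, so by Definition~\ref{def:stcyl} its source $0$-cylinder $U$ satisfies $\Pal U = \unit x = \Pal(\Triv\,x)$, whence $U = \Triv\,x$; the target is handled symmetrically.

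For the inductive step $n \geq 1$: unfolding Definition~\ref{def:stcyl}, the source $n$-cylinder $U$ of $\Triv\,z$ has $\Sce U = \unit{\Sce z}$, $\Tge U = \unit{\Tge z}$, and $\Sht U$ equal to the source $(n{-}1)$-cylinder of $\Sht{(\Triv\,z)} = \Triv\,\Sht z$, computed inside the shifted $\omega$-category $\HOM{\Sce z}{\Tge z}$. The shift construction of Section~\ref{subsec:shift} presents $\Sht z : \Sht x \to \Sht y$ as an $n$-cell of $\HOM{\Sce z}{\Tge z}$, so the induction hypothesis applied there gives the source of $\Triv\,\Sht z$ as $\Triv\,\Sht x$; hence $\Sht U = \Triv\,\Sht x$. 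Matching $U$ component by component with $\Triv\,x$, whose data are $\Sce{(\Triv\,x)} = \unit{\Sce x} = \unit{\Sce z}$, $\Tge{(\Triv\,x)} = \unit{\Tge x} = \unit{\Tge z}$, and $\Sht{(\Triv\,x)} = \Triv\,\Sht x$, yields $U = \Triv\,x$; the target $n$-cylinder is obtained symmetrically.

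No substantial obstacle is expected. The only point requiring care is carrying the induction through the shifted $\omega$-category in the inductive step, which is legitimate because the constructions $\Triv$ and source/target of cylinders are formulated intrinsically to an arbitrary $\omega$-category, and so the induction hypothesis at lower dimension may be invoked inside $\HOM{\Sce z}{\Tge z}$ without modification.
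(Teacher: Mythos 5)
Your proof is correct, and it is essentially the argument the paper has in mind: the paper omits the proof as a ``straightforward consequence of the definition,'' and the intended verification is exactly your induction on dimension, unwinding Definition~\ref{def:stcyl} and the definition of $\Triv$ through the shift construction $\HOM{\Sce z}{\Tge z}$, just as in the proof of Lemma~\ref{lemma:boundary}. Nothing to correct.
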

More generally, we get the following  notion of {\em degenerate cylinder}:
\begin{defn}
An $n$-cylinder between parallel cells is \emph{degenerate} whenever $n = 0$ or $n > 0$ and its source and target are~trivial.
\end{defn}
Remark that $\Triv \, x \para U \para \Triv \, y$ for any degenerate $n$-cylinder $U : x \cto y$. The next easy lemma gives a more concrete description of degenerate cylinders:
\begin{lemma}
\begin{enumerateroman}
\item For any degenerate $n$-cylinder $U : x \cto y$, we get a reversible $n{+}1$-cell $\Pal U : x \rto y$.
\item Conversely, any reversible $n{+}1$-cell $u : x \rto y$ corresponds to a unique degenerate $n$-cylinder $U : x \cto y$.
\end{enumerateroman}
\end{lemma}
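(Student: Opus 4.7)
The plan is to prove both parts by induction on $n$, with (i) and (ii) acting as mutually inverse constructions at each level.

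For (i), the base case $n=0$ is immediate from Definition~\ref{def:cylinder}: a 0-cylinder $U : x \cto y$ already comes equipped with a reversible 1-cell $\Pal U : x \rto y$, and every 0-cylinder is degenerate by convention. For the inductive step $n>0$, I start with a degenerate $n$-cylinder $U : x \cto y$ (so $x \para y$) and unpack the defining equations $s(U) = \Triv\, s(x)$ and $t(U) = \Triv\, t(x)$ against Definition~\ref{def:stcyl} and the recursive definition of $\Triv$. This reading forces $\Sce U = \unit{\Sce x}$ and $\Tge U = \unit{\Tge x}$, and also tells me that the source and target of the $(n{-}1)$-cylinder $\Sht U$ in $\HOM{\Sce x}{\Tge x}$ are trivial cylinders; hence $\Sht U$ is itself a degenerate $(n{-}1)$-cylinder (trivially so when $n=1$).

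Because $\Sce U$ and $\Tge U$ are units, I simplify $\Sht x \act \Tge U = \Sht x$ and $\Sce U \act \Sht y = \Sht y$, so that $\Sht U$ becomes a cylinder from $\Sht x$ to $\Sht y$ in $\HOM{\Sce x}{\Tge x}$. The induction hypothesis then supplies a reversible $n$-cell $\Pal{\Sht U} : \Sht x \rto \Sht y$ in that hom $\omega$-category. The shift construction of Section~\ref{subsec:shift} is a bijection between $(n{+}1)$-cells of $X$ with 0-source $\Sce x$ and 0-target $\Tge x$ and $n$-cells of $\HOM{\Sce x}{\Tge x}$, commuting with every composition and every unit; consequently it preserves $\omega$-equivalence and reversibility. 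Thus $\Pal{\Sht U}$ corresponds to a unique reversible $(n{+}1)$-cell of $X$ from $x$ to $y$, which I declare to be $\Pal U$.

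For (ii), I run the same induction in reverse. Given a reversible $(n{+}1)$-cell $u : x \rto y$, set $\Pal U := u$ when $n=0$. When $n>0$, the shift bijection gives a reversible $n$-cell $\Sht u : \Sht x \rto \Sht y$ in $\HOM{\Sce x}{\Tge x}$; by induction there is a unique degenerate $(n{-}1)$-cylinder $V$ with $\Pal V = \Sht u$, and I set $\Sce U := \unit{\Sce x}$, $\Tge U := \unit{\Tge x}$, $\Sht U := V$. A direct check shows that $U$ is degenerate and that its $\Pal U$, as defined in (i), equals $u$. Uniqueness then follows at once: for any degenerate $U'$ with $\Pal{U'} = u$, the components $\Sce{U'}$ and $\Tge{U'}$ are forced to be units by degeneracy, and $\Sht{U'}$ is forced by the inductive uniqueness clause applied to $\Sht u$.

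The only point that requires care is the bookkeeping that transfers degeneracy of $U$ to degeneracy of $\Sht U$. This reduces to the identity $\Sht{\Triv\, z} = \Triv\, \Sht z$ (built into the definition of $\Triv$) and the description in Definition~\ref{def:stcyl} of how source and target interact with the horizontal component of a cylinder. Neither step is deep, but keeping straight which $\omega$-category each cell lives in and checking that the shift preserves reversibility will be the main source of potential bugs; everything else is a routine unwinding of the definitions.
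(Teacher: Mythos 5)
Your argument is correct: the induction on $n$, unwinding degeneracy into $\Sce U=\unit{\Sce x}$, $\Tge U=\unit{\Tge x}$ and degeneracy of $\Sht U$, and then transporting reversibility across the shift bijection of Section~\ref{subsec:shift}, is exactly the routine verification intended here. The paper states this as an ``easy lemma'' and omits the proof entirely, so there is no alternative argument to compare against; your write-up supplies the missing details correctly, including the one genuinely needed observation that the shift preserves and reflects reversibility because it commutes with all compositions and units.
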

In particular, the trivial $n$-cylinder $\Triv \, x : x \cto x$ is the degenerate $n$-cylinder given by $\Pal{(\Triv \, x)} = \unit x : x \rto x$.
\sssec
Thus, for each $\omega$-category $X$, we have defined a globular set $\Cnx X$ whose $n$-cells are $n$-cylinders in $X$, together with globular morphisms $\Top_X, \Bot_X : \Cnx X \to X$ and $\Triv_X : X \to \Cnx X$ such that $\Top_X \circ \Triv_X = \id_X = \Bot_X \circ \Triv_X$.
\[
\begin{xy}
\xymatrix{& X \ar[ld]_{\id_X} \ar[d]^{\Triv_X} \ar[rd]^{\id_X} & \\ 
X & \Cnx X \ar[l]^{\Top_X} \ar[r]_{\Bot_X} & X}
\end{xy}
\]
 Now we may define compositions of $n$-cylinders in $X$, as well as units, in such a way that the globular set $\Cnx X$ becomes an $\omega$-category: this is done in detail in appendix~\ref{annex:connect} (see also~\cite{metayer:respol} and~\cite{lafontmetayer:polrhm}). Thus, from now on, $\Cnx X$ denotes this $\omega$-category. Likewise, $\Top_X$, $\Bot_X$ and $\Triv_X$ become $\omega$-functors. The following theorem, proved in appendix, summarizes the properties we actually use in the construction of our model structure.
\begin{theorem}\label{thm:connect}
The correspondence $X\mapsto\Cnx X$ is the object part of an endofunctor on $\ocat$, and $\Top,\Bot:\cnx\to \id$, $\Triv:\id\to\cnx$ are natural transformations.
\end{theorem}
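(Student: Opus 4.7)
The plan is to first define, for every $\omega$-functor $f : X \to Y$, a cell map $\cnx f : \Cnx X \to \Cnx Y$ by induction on the cylinder dimension $n$. Given an $n$-cylinder $U : x \cto y$ in $X$, we declare $\cnx f(U)$ to be the $n$-cylinder in $Y$ between $f\, x$ and $f\, y$ specified as follows: if $n = 0$, set $\Pal{\cnx f(U)} \deq f(\Pal U)$, which is reversible in $Y$ by Lemma~\ref{lemma:functor}; if $n > 0$, set $\Sce{\cnx f(U)} \deq f(\Sce U)$ and $\Tge{\cnx f(U)} \deq f(\Tge U)$ (again reversible by Lemma~\ref{lemma:functor}), and take $\Sht{\cnx f(U)}$ to be the image of $\Sht U$ under the induced $\omega$-functor $\HOM{\Sce x}{\Tge y} \to \HOM{f\,\Sce x}{f\,\Tge y}$, invoking the inductive hypothesis in this smaller $\omega$-category. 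A direct reading of Definition~\ref{def:stcyl} then shows that source and target cylinders are preserved, so $\cnx f$ is at least a morphism of globular sets.

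The second, and main, task is to verify that $\cnx f$ is compatible with the $\omega$-categorical structure placed on $\Cnx X$ and $\Cnx Y$ in Appendix~\ref{annex:connect}. Since the $k$-composition and the $m$-dimensional units of cylinders are built, coordinate-by-coordinate, from compositions and units of cells in the underlying $\omega$-category $X$ (via the shift $\omega$-category $\HOM{-}{-}$ of Section~\ref{subsec:shift}), and since $f$ strictly preserves compositions and units, the verification reduces to an induction on $n$ in which the inductive step applies the inductive hypothesis to the $\omega$-functor induced by $f$ on the appropriate shift. Functoriality, namely $\cnx\id_X = \id_{\Cnx X}$ and $\cnx(g \circ f) = \cnx g \circ \cnx f$, is established by the same inductive argument: both sides have identical values on 0-cylinders, and the shift component is handled by induction.

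For the natural transformations, recall that $\Top_X$ and $\Bot_X$ read off the $n$-cells $x$ and $y$ from an $n$-cylinder $U : x \cto y$, whereas $\Triv_X$ assembles identities and units. From the recursive definition of $\cnx f$, we get $\Top_Y(\cnx f(U)) = f\, x = f(\Top_X(U))$ and analogously for $\Bot$, giving $\Top_Y \circ \cnx f = f \circ \Top_X$ and $\Bot_Y \circ \cnx f = f \circ \Bot_X$. For $\Triv$, one uses that $f$ preserves $m$-dimensional units at every dimension to conclude $\cnx f(\Triv_X\, x) = \Triv_Y(f\, x)$ by an induction on $n$ identical in spirit to the previous ones, yielding $\cnx f \circ \Triv_X = \Triv_Y \circ f$.

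The hard part is not any single identity but the sheer book-keeping in the functoriality step: although each equality is just the $f$-image of an identity that already holds in $X$, the recursive definition of cylinders via the shift forces the induction on dimension to be carried out in parallel with an induction on the nested shift $\omega$-categories. This is exactly why the authors have isolated the detailed verification in Appendix~\ref{annex:connect}; the present plan amounts to observing that all structural data of $\Cnx X$ are $f$-equivariant by construction, so once the $\omega$-categorical structure on $\Cnx X$ is established there, functoriality of $\cnx$ and naturality of $\Top$, $\Bot$, $\Triv$ follow formally.
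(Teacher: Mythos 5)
Your outline coincides with the paper's: the appendix defines $\cnx f$ on cylinders recursively through the shift $\omega$-categories, states the compatibility of $\Cnx f$ with all the cylinder operations as lemmas (indeed without written proofs, since $f$ preserves every ingredient), and derives naturality of $\Top$, $\Bot$, $\Triv$ from the recursive definitions exactly as you do. So for the functoriality and naturality claims proper, your argument is the intended one.

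There is, however, a concrete misstatement that hides where the real work lies. The compositions of cylinders are \emph{not} built ``coordinate-by-coordinate'' from the compositions of $X$: while $\Sht{U \Comp n V} = \Sht U \Comp{n-1} \Sht V$ for $n>0$, the base case is the interleaving formula $\Sht{U \Comp 0 V} = x \act \Sht V \comp \Sht U \act y'$, which requires two auxiliary operations on cylinders --- concatenation $\comp$ and the left/right actions $\act$ --- that must first be defined and shown to satisfy associativity, unit laws, distributivity over $\comp$, and the commutation identity $U \act v \comp u' \act V = u \act V \comp U \act v'$. Verifying that the resulting compositions and units make $\Cnx X$ an $\omega$-category (associativity, units, interchange) is precisely the long chain of computations occupying the appendix, and it is not formal: e.g.\ the interchange law at $p=0$ needs both the compatibility of $\act$ and of $\comp$ with $\Comp{n-1}$. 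Your proposal assumes this structure ``once \ldots established there,'' but the paper counts that verification as part of the proof of this theorem, since without it $\Cnx X$ is not an object of $\ocat$ and $\cnx$ is not an endofunctor. The saving grace is that your equivariance argument for $\cnx f$, $\Top$, $\Bot$, $\Triv$ survives the corrected definition --- concatenation and action are themselves defined from data that $f$ preserves --- but as written the proposal both mischaracterizes the composition and omits the bulk of the argument.
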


In particular, we get $f \, U : f \, x \cto f \, x'$ for any $\omega$-functor $f : X \to Y$ and for any $n$-cylinder $U : x \cto x'$ in $X$.
\sssec
We end this presentation of $n$-cylinders with the following important ``transport'' lemma.
\begin{lemma}\label{lemma:transport}
For any parallel $n$-cylinders $U : x \cto x'$ and $V : y \cto y'$, we have a \emph{topdown transport}:
\begin{enumerateroman}
\item For any $z : x \to y$, there is $z' : x' \to y'$ together with a cylinder $\Cto {W : U \to V} z {z'}$.\label{item:cylinderfill}
\item Such a $z'$ is weakly unique: $z' \eqv z''$ for any $z'' : x' \to y'$ together with a cylinder $\Cto {W' : U \to V} z {z''}$.\label{item:uniquefill}
\item Conversely, there is a cylinder $\Cto {W' : U \to V} z {z''}$ for any $z'' : x' \to y'$ such that $z' \eqv z''$.\label{item:conversefill}
\end{enumerateroman}
Similarly, we have a \emph{bottom up transport}.
\end{lemma}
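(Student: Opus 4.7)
The plan is to prove (i), (ii) and (iii) together by induction on $n$, with Lemma~\ref{lemma:weakdiv} (weak division by reversible 1-cells) and the congruence properties of Proposition~\ref{prop:congruence} as the key tools; the bottom-up transport is entirely dual to the top-down case (right division in place of left division), so I only treat the top-down one. Tying the three clauses into a single joint induction is essential, because the existence step in higher dimensions produces a ``bottom'' cell only up to $\omega$-equivalence, and part (iii) is precisely what lets one replace it by an equivalent cell on the fly.

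Base case $n=0$: a 0-cylinder is a reversible 1-cell, so $U = (\Pal U : x \rto x')$ and $V = (\Pal V : y \rto y')$, and a 1-cylinder $W : U \to V$ with top $z : x \to y$ is determined (once a candidate bottom $z' : x' \to y'$ is fixed) by a reversible 2-cell $\Pal{\Sht W} : z \Comp 0 \Pal V \rto \Pal U \Comp 0 z'$. Apply left division by the reversible 1-cell $\Pal U$ (Lemma~\ref{lemma:weakdiv}) to the 1-cell $z \Comp 0 \Pal V$ to obtain a weakly unique $z' : x' \to y'$ with $\Pal U \Comp 0 z' \eqv z \Comp 0 \Pal V$; the witness of this $\omega$-equivalence supplies $\Pal{\Sht W}$, which settles (i). Part (ii) is the weak-uniqueness half of Lemma~\ref{lemma:weakdiv}, and for (iii) any $z'' \eqv z'$ satisfies $\Pal U \Comp 0 z'' \eqv \Pal U \Comp 0 z' \eqv z \Comp 0 \Pal V$ by transitivity and compatibility (Proposition~\ref{prop:congruence}), yielding the desired cylinder $W'$.

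Inductive step $n>0$: step into the shift $\omega$-category $\HOM{\Sce x}{\Tge x'}$. There $\Sht U$ and $\Sht V$ are parallel $(n-1)$-cylinders (using $\Sce V = \Sce U$ and $\Tge V = \Tge U$, a consequence of $U \para V$ at this dimension), and $\Sht z \act \Tge U$ is an $n$-cell running from the top $\Sht x \act \Tge U$ of $\Sht U$ to the top $\Sht y \act \Tge U$ of $\Sht V$. The inductive hypothesis applied in this shift produces an $n$-cell $\tilde z$ from $\Sce U \act \Sht{x'}$ to $\Sce U \act \Sht{y'}$ together with an $n$-cylinder from $\Sht U$ to $\Sht V$ whose top is $\Sht z \act \Tge U$ and whose bottom is $\tilde z$. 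Now apply left division by the reversible 1-cell $\Sce U$ (Lemma~\ref{lemma:weakdiv}, in a further shift) to obtain a weakly unique $n$-cell $\Sht{z'}$ with $\Sce U \act \Sht{z'} \eqv \tilde z$; this cell corresponds to an $(n+1)$-cell $z' : x' \to y'$ in $X$. Finally invoke part (iii) of the inductive hypothesis to replace $\tilde z$ by $\Sce U \act \Sht{z'}$ as the cylinder's bottom, and assemble the required $W$ with $\Sce W = \Sce U$, $\Tge W = \Tge U$, and $\Sht W$ the so-adjusted cylinder. Parts (ii) and (iii) propagate by combining the uniqueness and converse clauses of the inductive hypothesis with those of Lemma~\ref{lemma:weakdiv}, together with Proposition~\ref{prop:congruence} to move $\omega$-equivalences through pre- and postcomposition.

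The main obstacle is pure bookkeeping in the shifts: at each recursive level one must juggle pre- and postcomposition by $\Sce U$ and $\Tge U$, verify that shifts of parallel cylinders are again parallel, and cope with the fact that the bottom cell emerging from the inductive hypothesis has the form $\Sce U \act (-)$ only up to $\eqv$. Packaging (i), (ii), (iii) into a single joint induction — so that part (iii) is available at precisely the moment one wishes to swap the cylinder's bottom for an equivalent one — is what makes the whole argument go through cleanly.
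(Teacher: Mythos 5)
Your proof is correct and follows essentially the same route as the paper's: a joint induction on $n$ over all three clauses, using weak division by the reversible $1$-cells $\Pal U$ (resp.\ $\Sce U$) to produce the bottom cell, and clause~(iii) of the induction hypothesis to adjust the bottom of the shifted cylinder from $\tilde z$ to $\Sce U\act\Sht{z'}$. The only cosmetic difference is in the base case, where the paper writes the divided cell explicitly as $z'=\inv{u}\Comp 0 z\Comp 0 v$ rather than invoking Lemma~\ref{lemma:weakdiv} as a black box; the content is identical.
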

\begin{proof}
 We proceed by induction on $n$.
\begin{itemizeminus}
 \item If $n=0$, let $U:x\cto x'$ and $V:y\cto y'$ be parallel $0$-cylinders, and a $1$-cell $z:x\to y$. By definition, there are reversible $1$-cells $u:x\to x'$ and $v:y\to y'$. Let $\inv u:x'\to x$ a weak inverse of $u$, and define $z'=\inv u\Comp 0 z\Comp 0 v$. Now $u\Comp 0 z'=u\Comp 0\inv u\Comp 0 z\Comp 0 v$. As $u\Comp 0\inv u\eqv \unit x$, $u\Comp 0 z'\eqv z\Comp 0 v$, by using Proposition~(\ref{prop:congruence}). Whence a reversible $2$-cell $w:z\Comp 0 v\rto u\Comp 0 z'$, that is a reversible $1$-cell, or $0$-cylinder, in the $\omega$-category $\HOM{x}{y'}$. Thus we get a $1$-cylinder $\Cto{W:U\to V}z{z'}$, and~(\ref{item:cylinderfill}) is proved. Suppose now that there is a $z'':x'\to y'$ together with a $1$-cylinder $\Cto{W':U\to V}z{z''}$. It follows that $u\Comp 0 z'\eqv z\Comp 0 v\eqv u\Comp 0 z''$, whence $z'\eqv z''$ by Lemma~\ref{lemma:weakdiv}. This proves~(\ref{item:uniquefill}). Suppose finally that $z''\eqv z'$. We get $u\Comp 0 z''\eqv u\Comp 0 z'\eqv z\Comp 0 v$, and a cylinder $\Cto {W' : U \to V} z {z''}$ as above, which proves~(\ref{item:conversefill}).
\item Suppose that~(\ref{item:cylinderfill}),~(\ref{item:uniquefill}) and~(\ref{item:conversefill}) hold in dimension $n$. Let $U:x\cto x'$, $V:y\cto y'$ parallel $n{+}1$-cylinders and $z:x\to y$ an $n{+}2$-cell. By definition, we have reversible $1$-cells $\Sce U=\Sce V:\Sce x\rto\Sce{x'}$, $\Tge U=\Tge V:\Tge x\rto\Tge{x'}$, together with parallel $n$-cylinders $\Sht U:\Sht x\act\Tge U\cto\Sce U\act\Sht{x'}$ and $\Sht V:\Sht y\act\Tge y\cto\Sce V\act\Sht{y'}$ in $\HOM{\Sce x}{\Tge{y'}}$. Now $\Sht z\act\Tge U:\Sht x\act\Tge U\to \Sht y\act\Tge V$ is an $n{+}1$-cell $\Sht w$ in $\HOM{\Sce x}{\Tge{y'}}$. By the induction hypothesis, we get an $n{+}1$-cell $\Sht{w'}:\Sce U\act\Sht{x'}\to\Sce V\act\Sht{y'}$ and an $n{+}1$-cylinder $\Cto{\Sht{W_0}:\Sht U\to\Sht V}{\Sht w}{\Sht{w'}}$ in $\HOM{\Sce x}{\Tge{y'}}$. By Lemma~\ref{lemma:weakdiv}, there is a $\Sht{z'}:\Sht{x'}\to\Sht{y'}$ such that $\Sht{w'}\eqv\Sce U\act\Sht{z'}$. Thus, part~(\ref{item:conversefill}) of the induction hypothesis gives an $n{+}1$-cylinder $\Cto{\Sht W:\Sht U\to\Sht V}{\Sht z\act\Tge U}{\Sce U\act\Sht{z'}}$. But this defines an $n{+}2$-cylinder $\Cto{W:U\to V}{z}{z'}$, and~(\ref{item:cylinderfill}) holds in dimension $n{+}1$. Moreover, by induction, the above cell $\Sht{w'}$ is weakly unique, and so is $z'$, by Lemma~\ref{lemma:weakdiv}: this gives~(\ref{item:uniquefill}) in dimension $n{+}1$. Finally, if $z''\eqv z'$, $\Sce U\act\Sht{z''}\eqv\Sce U\act\Sht{z'}$ in $\HOM{\Sce x}{\Tge{y'}}$, and the induction hypothesis gives an $n{+}1$-cylinder $\Cto{\Sht{W'}:\Sht U \to\Sht V}{\Sht z\act\Tge U}{\Sce U\act\Sht{z''}}$, whence an $n{+}2$-cylinder $\Cto{W':U\to V}{z}{z''}$, so that~(\ref{item:conversefill}) holds in dimension $n{+}1$.
\end{itemizeminus}
\end{proof}

\begin{corollary}
For each $\omega$-category $X$, $\Top_X, \Bot_X$ are in $\Tfb$ and $\Triv_X$ is in $\Weq$.
\end{corollary}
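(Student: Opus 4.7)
The plan is to observe that the characterization of $\Tfb$ recalled at the start of Section~\ref{subsec:iinj} matches perfectly with the top-down and bottom-up transports of Lemma~\ref{lemma:transport}, so $\Top_X, \Bot_X \in \Tfb$ will follow directly; then $\Triv_X \in \Weq$ will drop out of a 3-for-2 style argument applied to the identity $\Top_X \circ \Triv_X = \id_X$ displayed in Section~\ref{subsec:connect}.

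Concretely, for $\Top_X \in \Tfb$ I would verify the two conditions stated at the start of Section~\ref{subsec:iinj}. The $0$-cell surjectivity is immediate: given a $0$-cell $y$ of $X$, the trivial $0$-cylinder $\Triv\, y$ satisfies $\Top_X(\Triv\, y) = y$. For the lifting condition, given parallel $n$-cylinders $U : x \cto x'$ and $U' : y \cto y'$ in $\Cnx X$ and an $n{+}1$-cell $v : x \to y$ in $X$ (that is, $v : \Top_X\, U \to \Top_X\, U'$), Lemma~\ref{lemma:transport}(\ref{item:cylinderfill}) produces some $v' : x' \to y'$ together with an $n{+}1$-cylinder $\Cto{W : U \to U'}{v}{v'}$. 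This $W$ is precisely an $n{+}1$-cell of $\Cnx X$ with source $U$, target $U'$, and $\Top_X\, W = v$, which is the required lift. The argument for $\Bot_X$ is identical, modulo swapping top and bottom and invoking the bottom-up half of Lemma~\ref{lemma:transport}.

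For $\Triv_X \in \Weq$ I would exploit the equation $\Top_X \circ \Triv_X = \id_X$ from Section~\ref{subsec:connect}. Since $\Top_X \in \Tfb \subseteq \Weq$ by Remark~\ref{rem:S2}, and $\id_X$ is trivially in $\Weq$, Lemma~\ref{lemma:rightinv} applied with $f = \Triv_X$ and $g = \Top_X$ delivers $\Triv_X \in \Weq$. No substantial obstacle is expected here: all the heavy lifting was carried out in setting up the cylinder construction and proving Lemma~\ref{lemma:transport}, so the corollary is essentially a direct unpacking. A minor point worth flagging is that we use only the already-established Lemma~\ref{lemma:rightinv} rather than the full 3-for-2 property, whose last direction is still pending at this stage of the paper.
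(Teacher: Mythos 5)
Your proposal is correct and follows essentially the same route as the paper: the lifting conditions for $\Top_X$ and $\Bot_X$ come from the topdown and bottom-up parts of Lemma~\ref{lemma:transport}, and $\Triv_X\in\Weq$ follows from $\Top_X\circ\Triv_X=\id_X$ together with \stwo{} and Lemma~\ref{lemma:rightinv}. Your explicit treatment of the $0$-cell surjectivity via trivial cylinders is a small completeness bonus that the paper's own proof leaves implicit.
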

\begin{proof}
  Let $\Cto Ux{x'}$ and $\Cto Vy{y'}$ be parallel $n$-cylinders in $X$ and $z:\Top_X U\to\Top_X V$ an $n{+}1$-cell. By Lemma~\ref{lemma:transport}, there is an $n{+}1$-cylinder $W:U\to V$ such that $\Top_X W=z$. This proves that $\Top_X$ is in $\Tfb$. Likewise, by bottom up transport, $\Bot_X$ is in $\Tfb$. But $\Tfb\subseteq\Weq$ by \stwo so that $\Top_X$ is a \weq. Now $\Top_X\circ\Triv_X=\id_X$, and by Lemma~\ref{lemma:rightinv}, $\Triv_X\in\Weq$.
\end{proof}

\subsection{Gluing factorization} \label{subsec:gluing}

For any $\omega$-functor $f : X \to Y$, we consider the following
pullback:
\begin{center}
$
\xygraph{ 
!{<0cm,0cm>;<1cm,0cm>:<0cm,1cm>::} 
!{(0,0) }*+{\Glu f}="ul"
!{(0,-1.5) }*+{\Cnx Y}="dl"
!{(2,0) }*+{X}="ur"
!{(2,-1.5) }*+{Y}="dr"
!{(0.5,-0.5)}*+{}="pulla"
!{(0.3,-0.5)}*+{}="pullb"
!{(0.5,-0.3)}*+{}="pullc"
"ul":"dl"_{f'}
"ul":"ur"^{f^*\Top_Y}
"ur":"dr"^{f}
"dl":"dr"_{\Top_Y}
"pulla":@{-}"pullb"
"pulla":@{-}"pullc"
}
$
\end{center}
We write $\lft f : \Glu f \to Y$ for $\Bot \circ f'$, so that the
following diagram commutes:
\begin{center}
$
\xygraph{ 
!{<0cm,0cm>;<1cm,0cm>:<0cm,1cm>::} 
!{(0,0) }*+{\Glu f}="ul"
!{(0,-1.5) }*+{\Cnx Y}="dl"
!{(2,0) }*+{X}="ur"
!{(2,-1.5) }*+{Y}="dr"
!{(-2,0) }*+{X}="ull"
!{(-2,-1.5) }*+{Y}="dll"
!{(0.5,-0.5)}*+{}="pulla"
!{(0.3,-0.5)}*+{}="pullb"
!{(0.5,-0.3)}*+{}="pullc"
"ul":"dl"_{f'}
"ul":"ur"^{f^*\Top_Y}
"ur":"dr"^{f}
"dl":"dr"_{\Top_Y}
"ull":"dll"_f
"ull":"ul"^{\rht f}
"dll":"dl"_{\Triv_Y}
"ull":@/^.75cm/"ur"^{\id_X}
"dll":@/_.75cm/"dr"_{\id_Y}
"pulla":@{-}"pullb"
"pulla":@{-}"pullc"
}
$
\end{center}

Since $\Top_Y$ is in $\Tfb$, so is its pullback $f^*\Top_Y$. By \stwo, $f^*\Top_Y$ is in $\Weq$. As $f^*\Top_Y\circ\rht f=\id_X$, by Lemma~\ref{lemma:rightinv}, $\rht f$ is also a \weq. 

\begin{defn}\label{def:glufact}
The decomposition $f = \lft f \circ \rht f$ is called the \emph{gluing factorization} of $f$.
\[
\begin{xy}
\xymatrix{X \ar[r]^{\rht f} \dar[rr]_{f} & \Glu f \ar[r]^{\lft f} & Y}
\end{xy}
\]
\end{defn}
The above constructions may be described more concretely as follows:
\begin{itemizeminus}
\item an $n$-cell in $\Glu f$ is a pair $(x, U)$ where $x$ is an $n$-cell in $X$ and $U : f \, x \cto y$ is an $n$-cylinder in $Y$; 
\item $\rht f \, x = (x, \Triv \, f \, x)$ for any $n$-cell $x$ in $X$, and $\lft f (x, U) = \Bot \, U = y$ for any $n$-cylinder $U : f \, x \cto y$ in $Y$.
\end{itemizeminus}
The gluing factorization leads to an extremely useful characterization of \weqs.
\begin{proposition}\label{prop:charweq}
An $\omega$-functor $f : X \to Y$ is in $\Weq$ if and only if $\lft f : \Glu f \to Y$ is in $\Tfb$.
\end{proposition}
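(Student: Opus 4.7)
The plan is to prove the two implications separately. The easy direction is that if $\lft f \in \Tfb$, then $\lft f \in \Weq$ by Remark~\ref{rem:S2} (i.e.\ condition~\stwo); since we already have $\rht f \in \Weq$ and $f = \lft f \circ \rht f$, Lemma~\ref{lemma:compos} gives $f \in \Weq$.

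For the converse, assume $f \in \Weq$ and verify the two conditions characterising $\Tfb$-morphisms listed at the start of Section~\ref{subsec:iinj}, using the concrete description of $\Glu f$ given above. For $0$-cells: given a $0$-cell $y$ in $Y$, the assumption $f \in \Weq$ yields $x$ in $X$ with $fx \eqv y$, and the witnessing reversible $1$-cell is precisely the datum of a $0$-cylinder $U : fx \cto y$, so $(x, U) \in \Glu f$ satisfies $\lft f (x, U) = y$. For the general lifting condition, let $(x, U : fx \cto y)$ and $(x', V : fx' \cto y')$ be parallel $n$-cells in $\Glu f$ and $w : y \to y'$ an $(n{+}1)$-cell in $Y$. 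I need to produce $(z, W)$ from $(x, U)$ to $(x', V)$ with $\lft f(z, W) = w$; concretely, $z : x \to x'$ in $X$ and an $(n{+}1)$-cylinder $W$ in $Y$ of source $U$, target $V$, top $fz$ and bottom $w$.

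The construction of $(z, W)$ proceeds in two moves. First, apply the bottom-up version of Lemma~\ref{lemma:transport}(i) to the parallel cylinders $U$, $V$ in $Y$ and to $w$: this produces an $(n{+}1)$-cell $v : fx \to fx'$ together with an $(n{+}1)$-cylinder of top $v$, bottom $w$, source $U$ and target $V$. Since $f$ is an \weq, there exists $z : x \to x'$ in $X$ with $fz \eqv v$. Finally, the converse clause Lemma~\ref{lemma:transport}(iii), in bottom-up form, upgrades the weak equivalence $fz \eqv v$ to a genuine $(n{+}1)$-cylinder $W : fz \cto w$ still having source $U$ and target $V$. The pair $(z, W)$ is then the required strict lift, establishing $\lft f \in \Tfb$.

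The main subtlety is keeping the direction of cylinder transport correct: one must fix $w$ as the bottom and use Lemma~\ref{lemma:transport} in bottom-up mode to produce a candidate top $v$, whose only defect from the standpoint of strict lifting is that it need not lie in the image of $f$. The \weq\ property of $f$ then replaces $v$ by $fz$ up to $\eqv$, and the converse part of the transport lemma absorbs that $\eqv$ into the cylinder structure. This is precisely the role the gluing construction $\Glu f$ was designed to play: it converts the inherently weak lifting property enjoyed by $f$ into the strict lifting property required of a trivial fibration.
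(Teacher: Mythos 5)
Your proof is correct and follows essentially the same route as the paper: the easy direction via \stwo{} and Lemma~\ref{lemma:compos}, and the converse by combining the bottom-up transport of Lemma~\ref{lemma:transport}(i), the \weq{} property of $f$, and the converse clause Lemma~\ref{lemma:transport}(iii) to produce the strict lift in $\Glu f$.
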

\begin{proof}
Suppose that $\lft f$ is in $\Tfb$, then it is in $\Weq$ by \stwo; as $\rht f$ is a \weq, so is the composition $f = \lft f \circ \rht f$, by Lemma~\ref{lemma:compos}. Conversely, suppose that $f$ is in $\Weq$, and let us show that $\lft f$ is in $\Tfb$:
\begin{itemizeminus}
\item For any 0-cell $y$ in $Y$, there is a 0-cell $x$ in $X$ such that $f \, x \eqv y$. Hence, we get a reversible $1$-cell $u : f \, x \rto y$ defining a 0-cylinder $U : f \, x \cto y$, so that $(x, U)$ is a 0-cell in $\Glu f$ and $\lft f (x, U) = y$.
\item For any $n$-cells $(x, T) \para (x', T')$ in $\Glu f$, we get parallel $n$-cylinders $T : f \, x \cto y$ and $T' : f \, x' \cto y'$. For any $n{+}1$-cell $w : y \to y'$, Lemma~\ref{lemma:transport}, bottom up direction, gives $v : f \, x \to f \, x'$ together with $\Cto {V : T \to T'} v w$. Since $f$ is in $\Weq$ and $x \para x'$, we get an $n{+}1$-cell $u : x \to x'$ such that $f \, u \eqv v$. By Lemma~\ref{lemma:transport},~(\ref{item:conversefill}), bottom up direction, we get $\Cto {U : T \to T'} {f \, u} w$, so that $(u, U) : (x, T) \to (x', T')$ is an $n{+}1$-cell in $\Glu f$ and $\lft f (u, U) = w$.
\end{itemizeminus}
\end{proof}

\begin{corollary} \label{cor:minimal}
$\Weq$ is the smallest class containing $\Tfb$ which is closed under composition and right inverse.
\end{corollary}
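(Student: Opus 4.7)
The plan is to verify both directions: first that $\Weq$ itself satisfies the three closure properties, then that any class $\mathcal{A}$ containing $\Tfb$ and closed under composition and right inverse must contain $\Weq$.

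For the first direction, the inclusion $\Tfb \subseteq \Weq$ is Remark \ref{rem:S2}, closure under composition is exactly Lemma \ref{lemma:compos}, and closure under right inverse follows from Lemma \ref{lemma:rightinv}: if $g \in \Weq$ and $g \circ h = \id$, then since $\id \in \Tfb \subseteq \Weq$, the lemma gives $h \in \Weq$.

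For the minimality direction, the idea is to exploit the gluing factorization of Definition \ref{def:glufact} together with the characterization of \weqs{} given by Proposition \ref{prop:charweq}. Let $\mathcal{A}$ be any class containing $\Tfb$ and closed under composition and right inverse, and let $f : X \to Y$ be a \weq. Write $f = \lft f \circ \rht f$. By Proposition \ref{prop:charweq}, $\lft f \in \Tfb \subseteq \mathcal{A}$. On the other hand, $\rht f$ satisfies $f^{\ast}\Top_Y \circ \rht f = \id_X$, and $f^{\ast}\Top_Y$ is the pullback of $\Top_Y \in \Tfb$ along $f$, which lies in $\Tfb$ by closure under pullback (Proposition \ref{prop:closure}), hence in $\mathcal{A}$. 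Closure of $\mathcal{A}$ under right inverse then yields $\rht f \in \mathcal{A}$, and finally closure under composition gives $f = \lft f \circ \rht f \in \mathcal{A}$.

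No significant obstacle is expected here: once the gluing factorization and Proposition \ref{prop:charweq} are available, the argument is almost mechanical. The only point that requires a moment's attention is noting that the right-inverse witness for $\rht f$ is not merely any \weq{} but actually a trivial fibration (namely $f^{\ast}\Top_Y$), which is what allows the argument to start from the hypothesis that $\mathcal{A}$ contains only $\Tfb$ rather than some larger class.
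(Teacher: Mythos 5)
Your proof is correct and follows exactly the route the paper intends: the corollary is stated without proof immediately after Proposition~\ref{prop:charweq} precisely because the gluing factorization $f=\lft f\circ\rht f$, with $\lft f\in\Tfb$ and $\rht f$ a right inverse of the $\I$-injective $f^*\Top_Y$, is the argument. Your handling of both directions, including the observation that the right-inverse witness for $\rht f$ lies in $\Tfb$ and not merely in $\Weq$, matches the paper's setup.
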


It is now possible to prove the remaining part of condition 3-for-2
for $\Weq$.
\begin{lemma}\label{lemma:leftinv}
  If $f : X \to Y$ and $h = g \circ f : X \to Z$ are in $\Weq$, so is
  $g : Y \to Z$.

\end{lemma}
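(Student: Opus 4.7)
The plan is to verify directly the two conditions of Definition~\ref{def:weq} for $g$. The $0$-dimensional essential surjectivity is immediate: given $z \in Z_0$, the hypothesis $h \in \Weq$ yields $x \in X_0$ with $h\, x \eqv z$, and then $y := f\, x$ satisfies $g\, y \eqv z$.

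For the cell-lifting condition, I would first establish an auxiliary statement by induction on $n$: \emph{for any parallel $n$-cells $y_0, y_1 \in Y_n$, there exist parallel $n$-cells $x_0, x_1 \in X_n$ together with parallel $n$-cylinders $A_i : f\, x_i \cto y_i$ in $Y$.} The base case $n = 0$ follows from $f \in \Weq$, since any two $0$-cells are parallel and any two $0$-cylinders are automatically parallel in $\Cnx Y$. For the inductive step, the globular identities give that $\Sce{y_0}=\Sce{y_1}$ and $\Tge{y_0}=\Tge{y_1}$ are themselves parallel $(n{-}1)$-cells, so the induction hypothesis furnishes parallel $\bar a, \bar b \in X_{n-1}$ with parallel $(n{-}1)$-cylinders $\bar A : f\, \bar a \cto \Sce{y_0}$ and $\bar B : f\, \bar b \cto \Tge{y_0}$. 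Bottom-up transport (Lemma~\ref{lemma:transport}) then pulls each $y_i$ back to a cell $y'_i : f\, \bar a \to f\, \bar b$ accompanied by a cylinder from $\bar A$ to $\bar B$ with top $y'_i$ and bottom $y_i$. Since $\bar a \para \bar b$, the hypothesis $f \in \Weq$ produces $x_i : \bar a \to \bar b$ with $f\, x_i \eqv y'_i$, and Lemma~\ref{lemma:transport}(iii) (bottom-up version) replaces the top $y'_i$ by $f\, x_i$, yielding the desired $A_i : f\, x_i \cto y_i$ with common source $\bar A$ and target $\bar B$.

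With the auxiliary lemma at hand, the main argument runs as follows. Given parallel $y_0, y_1 \in Y_n$ and $w : g\, y_0 \to g\, y_1$ in $Z$, take parallel lifts $x_i$ and cylinders $A_i : f\, x_i \cto y_i$ as above, and push them through $g$ to obtain parallel cylinders $g\, A_i : h\, x_i \cto g\, y_i$ in $Z$. Lemma~\ref{lemma:transport} (bottom-up) extends $w$ to a cylinder $B$ from $g\, A_0$ to $g\, A_1$ with top some $w' : h\, x_0 \to h\, x_1$ and bottom $w$. Because $h \in \Weq$ and $x_0 \para x_1$, there is $u : x_0 \to x_1$ in $X$ with $h\, u \eqv w'$. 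Applying Lemma~\ref{lemma:transport} (top-down) to $A_0, A_1$ and the cell $f\, u$ produces $v : y_0 \to y_1$ in $Y$ together with a cylinder $C$ from $A_0$ to $A_1$ having top $f\, u$ and bottom $v$. Applying $g$ yields $g\, C$ from $g\, A_0$ to $g\, A_1$ with top $h\, u$ and bottom $g\, v$; using $h\, u \eqv w'$ and Lemma~\ref{lemma:transport}(iii) to realign the top of $g\, C$ with that of $B$, the weak uniqueness of bottoms (Lemma~\ref{lemma:transport}(ii)) forces $g\, v \eqv w$.

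The main obstacle is the auxiliary parallel-lifting lemma, whose inductive step must simultaneously track parallelism of the $x_i$ in $X$ and parallelism of the cylinders $A_i$ in $\Cnx Y$. This is achieved by coordinating the globular identities with several applications of Lemma~\ref{lemma:transport} and the essential surjectivity encoded in $f \in \Weq$; once this lemma is in place, the remainder of the proof is a symmetric ``pull back, solve using $h \in \Weq$, push forward'' schema based entirely on cylinder transport.
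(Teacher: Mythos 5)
Your proof is correct and follows essentially the same route as the paper's: the paper obtains your auxiliary parallel-lifting lemma for free by observing that $\lft f:\Glu f\to Y$ is in $\Tfb$ (Proposition~\ref{prop:charweq}) and invoking Lemma~\ref{lemma:surjectivity}, and it likewise uses $\lft h\in\Tfb$ to produce $u$ together with an exact cylinder over $(h\,u,w)$, which renders your final realignment step unnecessary. Everything else --- transporting $w$ bottom-up, solving with $h\in\Weq$, transporting $f\,u$ top-down through the cylinders $A_0,A_1$, and concluding by weak uniqueness of the bottom --- matches the paper's argument step for step.
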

\begin{proof}
\begin{itemizeminus}
\item For any 0-cell $z$ in $Z$, there is a 0-cell $x$ in $X$ such that $h \, x \eqv z$. So we get $g \, y \eqv z$, where $y = f \, x$.
\item Let $y \para y'$ be $n$-cells in $Y$, and let $w : g \, y \to g \, y'$ be an $n{+}1$-cell in $Z$.
\begin{itemizeminus}
\item By Proposition~\ref{prop:charweq}, $\lft f$ is in $\Tfb$, so that Lemma~\ref{lemma:surjectivity} applies, and we get $x \para x'$ in $X$ and parallel $n$-cylinders $T : f \, x \cto y$ and $T' : f \, x' \cto y'$.
\item By Theorem~\ref{thm:connect}, we get parallel $n$-cylinders $g \,T : h \, x \cto g \, y$ and $g \,T' : h \, x' \cto g \, y'$.
\item By Proposition~\ref{prop:charweq}, $\lft h$ is in $\Tfb$ and we get $u : x \to x'$ together with $\Cto {U : g \, T \to g \, T'} {h \, u} w$.
\item By Lemma~\ref{lemma:transport},~(\ref{item:cylinderfill}) we get $v : y \to y'$ together with $\Cto {V : T \to T'} {f \, u} v$.
\item By Theorem~\ref{thm:connect}, we get $\Cto {g \, V : g \, T \to g \, T'} {h \, u} {g \, v}$.
\item By Lemma~\ref{lemma:transport},~(\ref{item:uniquefill}), we get $g \, v \eqv w$.
\end{itemizeminus}
\end{itemizeminus}
\end{proof}

\subsection{Immersions}\label{subsec:immersions}

In order to complete the proof of condition~\sthree, we introduce a new class of $\omega$-functors.

\begin{defn}\label{def:immersions}
An \emph{immersion} is an $\omega$-functor $f : X \to Y$ satisfying the following three conditions:
\begin{description}
\item[(Z1)] there is a retraction $g : Y \to X$ such that $g \circ f = \id_X$;
\item[(Z2)] there is an $\omega$-functor $h : Y \to \Cnx Y$ such that $\Top_Y \circ h = f \circ g$ and $\Bot_Y \circ h = \id_Y$;
\item[(Z3)] $h \circ f = \Triv_Y \circ f$. In other words, $h$ is trivial on $f(X)$.
\end{description}
\[
\begin{xy}
\xymatrix{X \ar[r]^{f} \dar[rr]_{\id_X} & Y \ar[r]^{g} & X}
\end{xy}
\qquad
\begin{xy}
\xymatrix{X \ar[d]_{f} & Y \ar[l]_{g} \ar[d]^{h} \uar[rd]^{\id_Y} \\ 
Y & \Cnx Y \ar[l]^{\Top_Y} \ar[r]_{\Bot_Y} & Y}
\end{xy}
\qquad
\begin{xy}
\xymatrix{X \ar[r]^{f} \ar[d]_{f} & Y \ar[d]^{h} \\
Y \ar[r]_{\Triv_Y} & \Cnx Y}
\end{xy}
\]
We write $\Imm$ for the class of immersions.
\end{defn}

Notice that, by naturality of $\Triv$, condition~\zthree can be replaced by the following one:
\begin{description}
\item[(Z3')] $h \circ f = \Cnx f \circ \Triv_X$.
\end{description}

The gluing construction of the previous section yields a characterization of immersions by a lifting property.
 
\begin{lemma}\label{lemma:charimm}
An $\omega$-functor $f : X \to Y$ is an immersion if and only if there is an $\omega$-functor $k : Y \to \Glu f$ such that $k \circ f = \rht f$ and $\lft f \circ k = \id_Y$.
\[ 
\begin{xy}
\xymatrix{X \ar[r]^{\rht f} \ar[d]_{f} & \Glu f \ar[d]^{\lft f} \\ 
Y \ear[ru]^{k} \ar[r]_{\id_Y} & Y}
\end{xy}
\]
\end{lemma}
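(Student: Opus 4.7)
The plan is to use the universal property of the pullback defining $\Glu f$. Recall that $\Glu f$ fits into a pullback square with projections $f^{\ast}\Top_Y : \Glu f \to X$ and $f' : \Glu f \to \Cnx Y$ satisfying $f \circ f^{\ast}\Top_Y = \Top_Y \circ f'$, while $\rht f : X \to \Glu f$ is the unique $\omega$-functor with $f^{\ast}\Top_Y \circ \rht f = \id_X$ and $f' \circ \rht f = \Triv_Y \circ f$, and $\lft f = \Bot_Y \circ f'$. So giving an $\omega$-functor $k : Y \to \Glu f$ is the same data as giving a pair $(g, h)$ with $g : Y \to X$, $h : Y \to \Cnx Y$, and $f \circ g = \Top_Y \circ h$; explicitly, one recovers $g = f^{\ast}\Top_Y \circ k$ and $h = f' \circ k$.

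For the ``only if'' direction, assume $f$ is an immersion with witnesses $g$ and $h$. The compatibility $f \circ g = \Top_Y \circ h$ comes directly from \zone\ and \ztwo (namely $\Top_Y \circ h = f \circ g$), so the pullback property produces a unique $k : Y \to \Glu f$ with $f^{\ast}\Top_Y \circ k = g$ and $f' \circ k = h$. Then $\lft f \circ k = \Bot_Y \circ f' \circ k = \Bot_Y \circ h = \id_Y$ by \ztwo. To check $k \circ f = \rht f$, postcompose by each projection: $f^{\ast}\Top_Y \circ k \circ f = g \circ f = \id_X = f^{\ast}\Top_Y \circ \rht f$ by \zone, and $f' \circ k \circ f = h \circ f = \Triv_Y \circ f = f' \circ \rht f$ by \zthree. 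Uniqueness of factorizations through the pullback forces $k \circ f = \rht f$.

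For the ``if'' direction, given $k$ satisfying the two equations, define $g = f^{\ast}\Top_Y \circ k$ and $h = f' \circ k$. Then $\Top_Y \circ h = \Top_Y \circ f' \circ k = f \circ f^{\ast}\Top_Y \circ k = f \circ g$. The equation $k \circ f = \rht f$ composed with $f^{\ast}\Top_Y$ gives $g \circ f = \id_X$, which is \zone; composed with $f'$ it gives $h \circ f = \Triv_Y \circ f$, which is \zthree. Finally, $\Bot_Y \circ h = \Bot_Y \circ f' \circ k = \lft f \circ k = \id_Y$, which, combined with the pullback identity just established, yields \ztwo.

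No step is really an obstacle: the entire argument is a mechanical translation between data for a map into a pullback and a pair of maps satisfying a compatibility. The only thing to be careful about is consistently using that $\rht f$ is itself the factorization of $(\id_X, \Triv_Y \circ f)$ through the pullback, so that the identities $f^{\ast}\Top_Y \circ \rht f = \id_X$ and $f' \circ \rht f = \Triv_Y \circ f$ are available when computing composites with $k \circ f$.
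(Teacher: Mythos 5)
Your proof is correct and follows essentially the same route as the paper's: both directions are handled by translating between a map $k$ into the pullback $\Glu f$ and the pair $(g,h)=(f^{\ast}\Top_Y\circ k,\, f'\circ k)$, using the universal property of the pullback (and the characterization of $\rht f$ as the factorization of $(\id_X,\Triv_Y\circ f)$) to verify each of \zone--\zthree. Nothing to add.
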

\begin{proof}
Let $f:X\to Y$, and suppose that there is a $k:Y\to\Glu f$ satisfying the above lifting property. Define $g=f^*\Top_Y\circ k$ and $h=f'\circ k$. We get $g\circ f=f^*\Top_Y\circ k\circ f=f^*\Top_Y\circ \rht f=\id_X$, hence~\zone. Also $\Top_Y\circ h=\Top_Y\circ f'\circ k=f\circ f^*\Top_Y\circ k=f\circ g$ and $\Bot_Y\circ h=\Bot_Y\circ f'\circ k=\lft f\circ k=\id_Y$, hence~\ztwo. Finally $h\circ f=f'\circ k\circ f= f'\circ\rht f=\Triv_Y\circ f$, hence~\zthree.

Conversely, suppose that $f:X\to Y$ is an immersion, and let $g$, $h$ satify the conditions of Definition~\ref{def:immersions}. By~\ztwo, $\Top_Y\circ h=f\circ g$, so that the universal property of $\Glu f$ yields a unique $k:Y\to\Glu f$ such that $f^*\Top_Y\circ k=g$ and $f'\circ k=h$. Thus $\lft f\circ k=\Bot_Y\circ f'\circ k=\Bot_Y\circ h=\id_Y$, by~\ztwo. Now $f^*\Top_Y\circ k\circ f=g\circ f=\id_X=f^*\Top_Y\circ \rht f$ and $f'\circ k\circ f=h\circ f=\Triv_Y\circ f$ by~\zthree so that $f'\circ k\circ f=f'\circ\rht f$: by the universal property of $\Glu f$, this gives $k\circ f=\rht f$, and we are done.
\end{proof}

\begin{corollary}\label{cor:trivcofimm}
$\Cof \cap \Weq \subseteq \Imm$.
\end{corollary}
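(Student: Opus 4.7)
The plan is to reduce the statement to a straightforward lifting argument by exploiting the characterization of immersions provided in Lemma~\ref{lemma:charimm}. Take an arbitrary $f : X \to Y$ in $\Cof \cap \Weq$. By Lemma~\ref{lemma:charimm}, showing that $f \in \Imm$ amounts to producing an $\omega$-functor $k : Y \to \Glu f$ such that $k \circ f = \rht f$ and $\lft f \circ k = \id_Y$; equivalently, $k$ is a diagonal filler for the commutative square
\[
\begin{xy}
\xymatrix{X \ar[r]^{\rht f} \ar[d]_{f} & \Glu f \ar[d]^{\lft f} \\
Y \ar[r]_{\id_Y} & Y}
\end{xy}
\]
whose commutativity comes directly from the definition of the gluing factorization $f = \lft f \circ \rht f$.

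The key ingredient is to observe that the right vertical arrow $\lft f$ is a trivial fibration. Indeed, by Proposition~\ref{prop:charweq}, the hypothesis $f \in \Weq$ is equivalent to $\lft f \in \Tfb$. Since $\Cof = {}^\pitchfork \Tfb$ by definition, every $\omega$-functor in $\Cof$ has the left lifting property with respect to every member of $\Tfb$. In particular, $f \pitchfork \lft f$, which provides exactly the required diagonal filler $k$. Applying Lemma~\ref{lemma:charimm} in the converse direction then yields $f \in \Imm$, completing the argument.

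I do not expect any serious obstacle here: once the two named results (Proposition~\ref{prop:charweq} and Lemma~\ref{lemma:charimm}) are in place, the proof is a one-line lifting diagram, and the only thing to verify is that the square above really commutes, which is the defining property of the gluing factorization. The substantive work has already been absorbed into the construction of $\cnx$ and $\Glu f$, and into the equivalence between $f \in \Weq$ and $\lft f \in \Tfb$.
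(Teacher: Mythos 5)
Your proposal is correct and follows exactly the paper's own argument: apply Proposition~\ref{prop:charweq} to get $\lft f\in\Tfb$, use $f\in\Cof={}^{\pitchfork}\Tfb$ to obtain the diagonal filler $k$ in the gluing square, and conclude by Lemma~\ref{lemma:charimm}. Nothing is missing.
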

\begin{proof}
Suppose that $f:X\to Y$ belongs to $\Cof\cap\Weq$. As $f\in\Weq$, by Proposition~\ref{prop:charweq}, $\lft f\in\Tfb$. Now $f\in Cof$ has the left lifting property with respect to $\lft f$, so that there is a $k$ such  that $k\circ f=\rht f$ and $\lft f\circ k=\id_Y$. By Lemma~\ref{lemma:charimm}, $f$ is an immersion. 
\end{proof}

\begin{lemma}\label{lemma:immweq}
$\Imm \subset \Weq$.
\end{lemma}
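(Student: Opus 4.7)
The plan is to use the lifting characterization of immersions provided by Lemma~\ref{lemma:charimm} in order to exhibit $f$ as a retract of $\rht f$ in the arrow category, and then conclude via the closure of $\Weq$ under retracts (Lemma~\ref{lemma:retracttransf}) together with the fact, established in Section~\ref{subsec:gluing}, that $\rht f$ is a weak equivalence.

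Concretely, given an immersion $f : X \to Y$, Lemma~\ref{lemma:charimm} supplies an $\omega$-functor $k : Y \to \Glu f$ such that $k \circ f = \rht f$ and $\lft f \circ k = \id_Y$. Combining this with the gluing factorization $\lft f \circ \rht f = f$ yields the commutative diagram
\[
\xymatrix{
X \ar[r]^{\id_X} \ar[d]_{f} & X \ar[r]^{\id_X} \ar[d]_{\rht f} & X \ar[d]^{f} \\
Y \ar[r]_{k} & \Glu f \ar[r]_{\lft f} & Y
}
\]
whose horizontal composites are both identities. Hence $f$ is a retract of $\rht f$ as an object of $\Morph{\ocat}$.

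Since $\rht f \in \Weq$ and $\Weq$ is closed under retracts by Lemma~\ref{lemma:retracttransf}, we conclude that $f \in \Weq$. There is no real obstacle: the substantive work has already been packaged into the equivalent reformulation of immersions via the gluing construction, so the argument reduces to a routine retract diagram followed by invocation of previously established closure properties.
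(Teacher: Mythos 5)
Your proof is correct, but it takes a genuinely different route from the paper. The paper argues directly from Definition~\ref{def:immersions}: given the retraction $g$ and the homotopy $h:Y\to\Cnx Y$, it checks the two clauses of Definition~\ref{def:weq} by hand, observing that $h\,y$ is a $0$-cylinder $f\,(g\,y)\cto y$ (whence $f\,(g\,y)\eqv y$), and that for $v:f\,x\to f\,x'$ condition \textbf{(Z3)} forces the cylinder $h\,v$ to be degenerate, so that $\Pal{(h\,v)}$ is a reversible cell witnessing $f\,(g\,v)\eqv v$. You instead pass through Lemma~\ref{lemma:charimm} to obtain the section $k$ of $\lft f$, display $f$ as a retract of $\rht f$ in $\Morph{\ocat}$ (your diagram does commute: the left square is $k\circ f=\rht f$, the right square is $\lft f\circ\rht f=f$, and the bottom composite is $\lft f\circ k=\id_Y$), and invoke the closure of $\Weq$ under retracts (Lemma~\ref{lemma:retracttransf}) together with the fact, already established after Definition~\ref{def:glufact}, that $\rht f\in\Weq$. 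Both arguments are legitimate and of comparable length, and neither is circular, since Lemma~\ref{lemma:charimm}, the retract closure, and $\rht f\in\Weq$ all precede this statement. The paper's version makes visible exactly where \textbf{(Z3)} is used and keeps the proof self-contained at the level of cells and cylinders; yours is a cleaner, more formal deduction that delegates all the cylinder manipulation to the equivalence of Lemma~\ref{lemma:charimm} and reduces the lemma to a standard retract manoeuvre.
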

\begin{proof}
Suppose that $f : X \to Y$ is an immersion, and let $g$, $h$ as in Definition~\ref{def:immersions}:
\begin{itemizeminus}
\item For any 0-cell $y$ in $Y$, we get $h \, y : f \, x \cto y$ where $x = g \, y$. Hence, we get $\Pal{(h \, y)} : f \, x \rto y$, so that $f \, x \eqv y$.
\item For any $n$-cells $x \para x'$ in $X$ and for any $v : f \, x \to f \, x'$ in $Y$, we have $h \, v : f \, u \cto v$ where $u = g \, v : x \cto x'$. By \zthree, the cylinder $h \, v : \Triv \, f \, x \to \Triv \, f \, x'$ is degenerate. Hence, we get $\Pal{(h \, v)} : f \, u \rto v$, so that $f \, u \eqv v$.
\end{itemizeminus}
\end{proof}

\begin{lemma}\label{lemma:pushoutimm}
$\Imm$ is closed under pushout.
\end{lemma}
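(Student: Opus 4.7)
The plan is to take a pushout square
\[
\begin{xy}
\xymatrix{X \ar[r]^{u} \ar[d]_{f} & X' \ar[d]^{f'} \\ Y \ar[r]_{u'} & Y'}
\end{xy}
\]
with $f$ an immersion witnessed by data $(g, h)$ satisfying \zone, \ztwo, \zthree, and to construct witnessing data $(g', h')$ for $f'$ by invoking the universal property of the pushout twice. All compatibility checks will reduce to the naturality of $\Triv$, $\Top$, and $\Bot$ provided by Theorem~\ref{thm:connect}.

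First, I would define $g' : Y' \to X'$ as the unique map induced by the cospan $(u \circ g,\ \id_{X'})$; the pushout coherence $u \circ g \circ f = u$ is immediate from \zone for $f$. The induced $g'$ then satisfies $g' \circ u' = u \circ g$ and $g' \circ f' = \id_{X'}$, the latter being \zone for $f'$.

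Next, to produce $h' : Y' \to \Cnx{Y'}$ I would use the cospan $(\Cnx{u'} \circ h,\ \Triv_{Y'} \circ f')$. The required coherence reduces, via \zthree for $f$ and naturality of $\Triv$, to
\[
\Cnx{u'} \circ h \circ f = \Cnx{u'} \circ \Triv_{Y} \circ f = \Triv_{Y'} \circ u' \circ f = \Triv_{Y'} \circ f' \circ u.
\]
The induced $h'$ then satisfies $h' \circ u' = \Cnx{u'} \circ h$ and $h' \circ f' = \Triv_{Y'} \circ f'$, the latter being precisely \zthree for $f'$.

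It would remain to verify \ztwo for $f'$, namely $\Top_{Y'} \circ h' = f' \circ g'$ and $\Bot_{Y'} \circ h' = \id_{Y'}$. Both are equalities of maps $Y' \to Y'$, so by uniqueness in the universal property of the pushout it suffices to check them after precomposing with $u'$ and with $f'$. For the $\Top$-equation, naturality of $\Top$ and \ztwo for $f$ give $\Top_{Y'} \circ h' \circ u' = u' \circ \Top_Y \circ h = u' \circ f \circ g = f' \circ g' \circ u'$, while the relation $h' \circ f' = \Triv_{Y'} \circ f'$ combined with $\Top_{Y'} \circ \Triv_{Y'} = \id_{Y'}$ collapses both sides of the $f'$-component to $f'$. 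The $\Bot$-equation is handled identically, using $\Bot_{Y'} \circ \Triv_{Y'} = \id_{Y'}$. I do not foresee any substantial obstacle: the whole argument is a diagram chase, and it works precisely because the definition of an immersion has been arranged to interact transparently with the naturality of $\cnx$.
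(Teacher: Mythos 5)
Your proposal is correct and follows essentially the same route as the paper: both induce $g'$ from the cospan $(u\circ g,\ \id_{X'})$ and $h'$ from a cospan whose second leg is $\Triv_{Y'}\circ f'=\Cnx{f'}\circ\Triv_{X'}$ (the paper phrases this via condition \zthreep, you via \zthree{} plus naturality of $\Triv$, which are interchangeable). The paper merely asserts that \zone--\zthree{} for $(g',h')$ follow from those for $(g,h)$, whereas you carry out the verification explicitly by testing against $u'$ and $f'$ and invoking uniqueness of maps out of the pushout; this is exactly the intended diagram chase.
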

\begin{proof}
Let $f : X \to Y$ be an immersion, $i : X \to X'$ an $\omega$-functor
and $f':X'\to Y'$ the pushout of $f$ by $i$:

\begin{center}
$
\xygraph{ 
!{<0cm,0cm>;<1cm,0cm>:<0cm,1cm>::}
!{(0,0) }*+{X}="ul"
!{(0,-1.5) }*+{Y}="dl"
!{(2,0) }*+{X^\prime}="ur"
!{(2,-1.5) }*+{Y^\prime}="dr"
!{(1.5,-1.)}*+{}="pusha"
!{(1.7,-1.)}*+{}="pushb"
!{(1.5,-1.2)}*+{}="pushc"
"ul":"dl"_f
"ul":"ur"^i
"ur":"dr"^{f^\prime}
"dl":"dr"_j
"pusha":@{-}"pushb"
"pusha":@{-}"pushc"
}
$
\end{center}

Since $f$ is an immersion, we have $g : Y \to X$ and $h : Y \to \Cnx
Y$ satisfying conditions~\zone to~\zthree. By universality of the
pushout and by \zthreep, we get $g' : Y' \to X'$ and $h' : Y' \to
\Cnx{Y'}$ such that the following diagrams commute:

\[
\xygraph{ 
!{<0cm,0cm>;<1cm,0cm>:<0cm,1cm>::}
!{(0,0) }*+{X}="ul"
!{(0,-1.5) }*+{Y}="dl"
!{(2,0) }*+{X^\prime}="ur"
!{(2,-1.5) }*+{Y^\prime}="dr"
!{(0,-3) }*+{X}="ddl"
!{(2,-3) }*+{X^\prime}="ddr"
!{(1.5,-1.)}*+{}="pusha"
!{(1.7,-1.)}*+{}="pushb"
!{(1.5,-1.2)}*+{}="pushc"
"ul":"dl"_f
"ul":"ur"^i
"ur":"dr"^{f^\prime}
"dl":"dr"_j
"dl":"ddl"_g
"dr":@{-->}"ddr"^{g'}
"ddl":"ddr"_i
"ul":@/_.8cm/"ddl"_{\id_X}
"ur":@/^.8cm/"ddr"^{\id_{X'}}
"pusha":@{-}"pushb"
"pusha":@{-}"pushc"
}
\qquad
\xygraph{ 
!{<0cm,0cm>;<1cm,0cm>:<0cm,1cm>::}
!{(0,0) }*+{X}="ul"
!{(0,-1.5) }*+{Y}="dl"
!{(2,0) }*+{X^\prime}="ur"
!{(2,-1.5) }*+{Y^\prime}="dr"
!{(0,-3) }*+{\Cnx Y}="ddl"
!{(2,-3) }*+{\Cnx {Y'}}="ddr"
!{(-1,-1.5) }*+{\Cnx X}="dll"
!{(3,-1.5) }*+{\Cnx {X'}}="drr"
!{(1.5,-1.)}*+{}="pusha"
!{(1.7,-1.)}*+{}="pushb"
!{(1.5,-1.2)}*+{}="pushc"
"ul":"dl"_f
"ul":"ur"^i
"ur":"dr"^{f^\prime}
"dl":"dr"_j
"dl":"ddl"_h
"dr":@{-->}"ddr"^{h'}
"ddl":"ddr"_{\Cnx j}
"ul":@/_.3cm/"dll"_{\Triv_X}
"dll":@/_.3cm/"ddl"_{\Cnx f}
"ur":@/^.3cm/"drr"^{\Triv_{X'}}
"drr":@/^.3cm/"ddr"^{\Cnx {f'}}
"pusha":@{-}"pushb"
"pusha":@{-}"pushc"
}
\]

Finally, conditions~\zone to~\zthree for $g'$ and $h'$ follow from conditions~\zone to~\zthree for $g$ and $h$.
\end{proof}

\begin{corollary} \label{cor:thepushout}
$\Cof \cap \Weq$ is closed under pushout.
\end{corollary}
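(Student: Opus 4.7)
The proof is essentially immediate once the three ingredients just established are combined. I would argue as follows: take $f \in \Cof \cap \Weq$ and consider a pushout of $f$ along some $\omega$-functor $i$, producing $f'$. I need to show that $f' \in \Cof \cap \Weq$.

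For the cofibration part, Proposition~\ref{prop:closure} tells us that $\Cof$ is closed under pushout, so $f' \in \Cof$ with no further work. For the weak equivalence part, the strategy is to route through immersions rather than work with $\Weq$ directly, since $\Weq$ is not known (and indeed not true) to be closed under pushout. Concretely, Corollary~\ref{cor:trivcofimm} gives $f \in \Imm$; Lemma~\ref{lemma:pushoutimm} then yields $f' \in \Imm$; and Lemma~\ref{lemma:immweq} finally gives $f' \in \Weq$.

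Combining the two, $f' \in \Cof \cap \Weq$, which is exactly the desired closure property. There is no real obstacle here: all the work has been done in the preceding lemmas, with the nontrivial content being the pushout-closure of $\Imm$ (which relied on the universal property of pushouts together with the functoriality and naturality properties of $\cnx$, $\Top$, $\Bot$, $\Triv$ summarized in Theorem~\ref{thm:connect} and the reformulation~\zthreep of condition~\zthree). The present corollary is simply the payoff: immersions were introduced precisely as the intermediate class sandwiched between $\Cof \cap \Weq$ and $\Weq$ in order to transport pushout-closure from a well-behaved class to $\Cof \cap \Weq$, thereby completing condition~\sthree of Smith's theorem.
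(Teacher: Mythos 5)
Your proposal is correct and follows exactly the same route as the paper's own proof: pushout-stability of $\Cof$ from Proposition~\ref{prop:closure}, and the chain $\Cof\cap\Weq\subseteq\Imm$ (Corollary~\ref{cor:trivcofimm}), pushout-closure of $\Imm$ (Lemma~\ref{lemma:pushoutimm}), and $\Imm\subseteq\Weq$ (Lemma~\ref{lemma:immweq}). Nothing is missing.
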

\begin{proof}
  Let $f\in\Cof\cap\Weq$ and $f'$ a pushout of $f$. By Corollary~\ref{cor:trivcofimm}, $f$ is an immersion, and so is $f'$ by Lemma~\ref{lemma:pushoutimm}. By Lemma~\ref{lemma:immweq}, $f'$ is a \weq. Now $\Cof$ is stable by pushout, so that $f'\in\Cof$. Hence $f'\in\Cof\cap\Weq$ and we are done.
\end{proof}

\subsection{Generic squares} \label{subsec:generic}

By Yoneda's Lemma, for each $n$, the functor $X\mapsto X_n$, from $\ocat$ to $\sets$ is represented by the $n$-globe $\OO n$. Thus, to each $n$-cell $x$ of $X$ corresponds a unique $\omega$-functor 
\begin{displaymath}
  \sng x:\OO n\to X.
\end{displaymath}
Moreover, for any pair $x$, $x'$ of $n$-cells in $X$, the condition of parallelism $x\para x'$ is equivalent to  $\sng x\circ\ii n=\sng{x'}\circ\ii n$. By the pushout square~(\ref{eq:pushout}) mentioned at the beginning of Section~\ref{sec:folkmodel}, we get a unique $\omega$-functor 
\begin{displaymath}
  \pair{x}{x'}:\DO{n{+}1}\to X.
\end{displaymath}
associated to any pair $x$, $x'$ of parallel $n$-cells. This applies
in particular to the case where $x =x'= o$, the unique proper $n$-cell
of $\OO n$. The corresponding $\omega$-functor is denoted by $\oo n =
\pair o o : \DO{n{+}1} \to \OO n$. Since $\ocat$ is locally presentable,
there is a factorization $\oo n = \pp n \circ \kk n$ with $\pp n \in \Tfb$ and $\kk n \in \Cof$.
\[
\begin{xy}
\xymatrix{\DO{n{+}1} \ar[r]^{\kk n} \dar[rr]_{\oo n} & \PP n \ar[r]^{\pp n} & \OO n}
\end{xy}
\]
Now by composition of $\kk n$ with both $\omega$-functors $\OO n\to
\DO{n{+}1}$ of the pushout~(\ref{eq:pushout}), we get $\jj n,\jjj
n:\OO n\to\PP n$ such that the following diagram commutes:
\begin{center}
$
\xygraph{ 
!{<0cm,0cm>;<1cm,0cm>:<0cm,1cm>::}
!{(0,0) }*+{\DO n}="ul"
!{(0,-1.5) }*+{\OO n}="dl"
!{(2,0) }*+{\OO n}="ur"
!{(2,-1.5) }*+{\DO{n{+}1}}="dr"
!{(4,-1.5) }*+{\PP n}="drr"
!{(6,-1.5) }*+{\OO n}="drrr"
!{(1.5,-1.)}*+{}="pusha"
!{(1.7,-1.)}*+{}="pushb"
!{(1.5,-1.2)}*+{}="pushc"
"ul":"dl"_{\ii n}
"ul":"ur"^{\ii n}
"ur":"dr"
"dl":"dr"
"dr":"drr"^{\kk n}
"drr":"drrr"^{\pp n}
"ur":@/^.4cm/"drr"^(.7){\jj n}
"ur":@/^.54cm/"drrr"^{\id_{\OO n}}
"dl":@/_.4cm/"drr"_(.75){\jjj n}
"dl":@/_.9cm/"drrr"_{\id_{\OO n}}
"pusha":@{-}"pushb"
"pusha":@{-}"pushc"
}
$
\end{center}

The following definition singles out an important part of the above diagram.
\begin{defn}
The \emph{generic $n$-square} is the following commutative square:
\[
\begin{xy}
\xymatrix{\DO n \ar[d]_{\ii n} \ar[r]^{\ii n} & \OO n \ar[d]^{\jj n} \\
\OO n \ar[r]_{\jjj n} & \PP n}
\end{xy}
\]
\end{defn}

\begin{rem}\label{rem:solutionset}
Notice that $\pp n$ is in $\Tfb$, hence in $\Weq$, and that $\pp
n\circ\jj n=\id_{\OO n}$. Therefore $\jj n\in\Weq$, by
Lemma~\ref{lemma:rightinv}. On the other hand ${\ii n} \in
\Icof{I}$. Since $\Icof{I}$ is stable under composition and pushout, we
have $\jj n\in\Icof{I}$
\end{rem}

The next result characterizes the relation of $\omega$-equivalence in terms of suitable factorizations.
\begin{lemma}\label{lemma:charomegaeq}
For any $n$-cells $x \para x'$ in $X$, the following conditions are equivalent:
\begin{enumerateroman}
\item \label{item:eqv}  $x \eqv x'$;
\item \label{item:factory} there is an $\omega$-category $Y$ and $\omega$-functors $k:\DO{n{+}1}\to Y$, $p:Y\to \OO n$ and $q:Y\to X$ such that $p\in\Tfb$ and the following diagram commutes:
\[
\begin{xy}
\xymatrix{& \DO{n{+}1} \ar[ld]_{\oo n} \ear[d]^{k} \ar[rd]^{\pair x {x'}} & \\
\OO n & \ear[l]^{p} Y \ear[r]_{q} & X}
\end{xy};
\]
\item \label{item:factorp} There is an $\omega$-functor $q:\PP n\to X$ such that the following diagram commutes:
\[
\begin{xy}
\xymatrix{& \DO{n{+}1} \ar[ld]_{\oo n} \ar[d]^{\kk n} \ar[rd]^{\pair x {x'}} & \\
\OO n & \ar[l]^{\pp n} \PP n \ear[r]_{q} & X}
\end{xy}.
\]
\end{enumerateroman}
\end{lemma}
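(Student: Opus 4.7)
The plan is to establish four implications: (iii)~$\Rightarrow$~(ii), (ii)~$\Rightarrow$~(iii), (ii)~$\Rightarrow$~(i), and (i)~$\Rightarrow$~(ii). The first is trivial: take $Y = \PP n$, $k = \kk n$, $p = \pp n$, and note $\pp n \in \Tfb$ by construction. For (ii)~$\Rightarrow$~(iii), since $\kk n \in \Cof$ while $p \in \Tfb$, the commuting square with top edge $k$, bottom edge $\pp n$, left edge $\kk n$ and right edge $p$ (commutativity from $p \circ k = \oo n = \pp n \circ \kk n$) admits a diagonal $\ell : \PP n \to Y$; then $q \circ \ell : \PP n \to X$ satisfies $(q \circ \ell) \circ \kk n = q \circ k = \pair{x}{x'}$. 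For (ii)~$\Rightarrow$~(i), let $a^-$ and $a^+$ denote the images under $k$ of the two proper $n$-cells of $\DO{n{+}1}$: these are parallel in $Y$ with $p(a^-) = o = p(a^+)$, so trivially $p(a^-) \eqv p(a^+)$. Since $p \in \Tfb \subseteq \Weq$ by~\stwo, Lemma~\ref{lemma:weakinj} gives $a^- \eqv a^+$, and applying $q$ together with Lemma~\ref{lemma:functor} yields $x = q(a^-) \eqv q(a^+) = x'$.

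The main creative step is (i)~$\Rightarrow$~(ii). Given $u : x \rto x'$ with weak inverse $\inv u$ and the coinductive reversibility witnesses, I would take $Y$ to be the free $\omega$-category on the polygraph encoding these data: the lower-dimensional cells of $\DO{n{+}1}$, two $n$-cells $y^-, y^+$, a generating $(n{+}1)$-cell $v : y^- \to y^+$, a weak inverse $\inv v : y^+ \to y^-$, $(n{+}2)$-cells witnessing $v \Comp n \inv v \eqv \unit{y^-}$ and $\inv v \Comp n v \eqv \unit{y^+}$, and so on coinductively. Let $k : \DO{n{+}1} \to Y$ be the evident inclusion, $p : Y \to \OO n$ the collapse sending every reversibility generator to the relevant identity of $o$, and $q : Y \to X$ the $\omega$-functor reading off the reversibility tree of $u$ generator by generator (so $v \mapsto u$, $\inv v \mapsto \inv u$, and so on). The identities $p \circ k = \oo n$ and $q \circ k = \pair{x}{x'}$ hold by construction.

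The main obstacle is verifying that the collapse $p : Y \to \OO n$ is indeed a trivial fibration, i.e.\ solves every $\ii m$-lifting problem. This requires the polygraph defining $Y$ to contain enough reversibility generators, and amounts to a coinductive verification that parallels the construction of $\PP n$ by the small object argument. An alternative route, bypassing the explicit $Y$, is to build $q : \PP n \to X$ directly by transfinite induction along $\PP n = \tmop{colim}_\alpha Z_\alpha$, extending $q$ at each successor stage by choosing the required $m$-cell fillers in $X$ from the reversibility tree of $u$: at the first stage the four $(n{+}1)$-cells to be attached between the $y^\pm$ are filled by $u$, $\inv u$, $\unit x$, $\unit{x'}$, and each subsequent stage uses the next level of witnesses.
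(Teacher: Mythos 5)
Three of your four implications --- (iii)$\Rightarrow$(ii), (ii)$\Rightarrow$(iii) via the lifting property of $\kk n\in\Cof$ against $p\in\Tfb$, and (ii)$\Rightarrow$(i) via Lemmas~\ref{lemma:weakinj} and~\ref{lemma:functor} --- are correct and are essentially the paper's arguments. The problem is (i)$\Rightarrow$(ii), the substantive implication, where your proof has a genuine gap that you yourself flag: you never verify that your collapse $p:Y\to\OO n$ is in $\Tfb$. This is not a routine check. For $p$ to be in $\Tfb$ you need, for \emph{every} pair of parallel $m$-cells $a\para a'$ of $Y$ lying over the same cell of $\OO n$, an $m{+}1$-cell $a\to a'$ in $Y$; the polygraph you describe only provides generators along the reversibility tree of the single cell $u$, whereas arbitrary composites of those generators (for instance $v\Comp n\inv v\Comp n v$ and $v$, or longer zig-zags in higher dimensions) also form parallel pairs over $\OO n$ that need strict fillers. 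Supplying all of these is exactly the transfinite accumulation performed by the small object argument in the construction of $\PP n$. Your alternative route (extending $q$ stage by stage along that construction) runs into the dual difficulty: at each successor stage the new generators are attached for \emph{all} lifting problems against the previous stage, not just those arising from the reversibility tree of $u$, so extending $q$ requires formulating and propagating an invariant of the form ``parallel cells with equal image in $\OO n$ have $\omega$-equivalent images in $X$''. Neither version is carried out, so the implication is not proved.

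The paper sidesteps all of this by using machinery it has already paid for. Set $f=\sng x:\OO n\to X$ and take $Y=\Glu f$, the pullback of $\Top_X:\Cnx X\to X$ along $f$, with $p=f^*\Top_X$ and $q=\lft f=\Bot_X\circ f'$. Then $p\in\Tfb$ comes for free, because $\Top_X\in\Tfb$ (a corollary of the transport Lemma~\ref{lemma:transport}) and $\Tfb$ is stable under pullback. The reversible cell $u:x\rto x'$ is repackaged as a degenerate $n$-cylinder $U:x\cto x'$, and the cone $(\oo n,\ \pair{\Triv_X\,x}{U})$ over the pullback induces the required $k:\DO{n{+}1}\to\Glu f$ with $p\circ k=\oo n$ and $q\circ k=\pair x{x'}$. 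If you want to keep your explicit construction, the honest fix is to actually prove $p\in\Tfb$ for your $Y$; but that amounts to rebuilding by hand the cylinder and gluing apparatus of Sections~\ref{subsec:connect} and~\ref{subsec:gluing}, which is precisely why the paper develops the functor $\cnx$ before stating this lemma.
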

\begin{proof}
If $x \eqv x'$, there is a reversible $n{+}1$-cell $u : x \rto x'$ which defines a degenerate $n$-cylinder $U : x \cto x'$. We get $\Triv_X \, x \para U$, whereas $\Top_X \, \Triv_X \, x = \Bot_X \, \Triv_X \, x = \Top_X \, U = x$ and $\Bot_X \, U = x'$, so that the following diagrams commute:
\[
\begin{xy}
\xymatrix{\DO{n{+}1} \ar[r]^{\oo n} \ar[rd]^{\pair x x} \ar[d]_{\pair {\Triv_X \, x} U} & \OO n \ar[d]^{\sng x} \\ 
\Cnx X \ar[r]_{\Top_X} & X}
\end{xy}
\qquad
\begin{xy}
\xymatrix{\DO{n{+}1} \ar[rd]^{\pair x {x'}} \ar[d]_{\pair {\Triv_X \, x} U} \\ 
\Cnx X \ar[r]_{\Bot_X} & X}
\end{xy}
\]
Let $f = \sng x$. By universality of $\Glu f$, we get $k : \DO{n{+}1} \to \Glu f$ such that the following diagram commutes:
\[
\begin{xy}
\xymatrix{\DO{n{+}1} \uar[rr]^{\oo n} \ear[r]_{k} \dar[rd]_{\pair {\Triv_X \, x} U} & \Glu f \ar[r]_{f^*\Top_X} \ar[d]_{f'} & \OO n \ar[d]^{f} \\ 
& \Cnx X \ar[r]_{\Top_X} & X}
\end{xy}
\]
The desired factorizations are given by $Y = \Glu f$, $p = f^*\Top_X$ and $q = \lft f = \Bot_X \circ f'$. Hence,~(\ref{item:eqv}) implies~(\ref{item:factory}).

Conversely, if we assume~(\ref{item:factory}), then $k$ gives us two $n$-cells $y \para y'$ in $Y$ such that $p \, y = p \, y'$, $q \, y = x$ and $q \,y' = x'$. Hence, we get $y \eqv y'$ by Lemma~\ref{lemma:weakinj} applied to $p$, and $x \eqv x'$ by Lemma~\ref{lemma:functor} applied to $q$.

On the other hand, if we assume~(\ref{item:factory}), then $k$ factors through $\kk n$ by the left lifting property, and so does $\pair x {x'}$. Hence~(\ref{item:factory}) implies~(\ref{item:factorp}). Conversely,~(\ref{item:factorp}) is just a special case of~(\ref{item:factory}). 
\end{proof}

\sssec

We now turn to a new characterization of \weqs.
\begin{proposition}\label{prop:charweqbis}
An $\omega$-functor $f : X \to Y$ is an \weq{} if and only if
any commutative square whose left arrow is $\ii n$ and whose right
arrow is $f$ factors through the generic $n$-square.
\[
\begin{xy}
\xymatrix{\DO n \uar[rr]^{} \ar[r]_{\ii n} \ar[d]_{\ii n} & \OO n \ar[d]^{\jj n} \ear[r] & X \ar[d]^{f} \\ 
\OO n \ar[r]^{\jjj n} \dar[rr]_{} & \PP n \ear[r] & Y}
\end{xy}
\]
\end{proposition}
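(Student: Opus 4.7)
The plan is to translate both directions cell-theoretically and then invoke Lemma~\ref{lemma:charomegaeq}. By Yoneda, $\omega$-functors $\OO n \to Z$ are $n$-cells of $Z$, and by the pushout~(\ref{eq:pushout}), $\omega$-functors $\DO n \to Z$ correspond to pairs of parallel $(n{-}1)$-cells of $Z$ (where $\DO 0$ is initial, so the $n = 0$ case is vacuous). A commutative square of the form stated thus corresponds, for $n \geq 1$, to parallel $(n{-}1)$-cells $x \para x'$ in $X$ together with an $n$-cell $v : f\,x \to f\,x'$ in $Y$, and for $n = 0$ to a single $0$-cell $v$ of $Y$.

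Next I would unpack what a factorization through the generic $n$-square amounts to. The dashed arrow $\OO n \to X$ picks out an $n$-cell $u$ of $X$, and commutativity of the upper triangle forces $u : x \to x'$ (vacuously so for $n = 0$). The dashed arrow $\beta : \PP n \to Y$ must satisfy $\beta \circ \jj n = \sng{f\,u}$ and $\beta \circ \jjj n = \sng v$. Since $\kk n : \DO{n+1} \to \PP n$ realizes the parallel pair of $n$-cells $(\jj n, \jjj n)$ of $\PP n$ via the pushout description of $\DO{n+1}$, these two conditions together say exactly $\beta \circ \kk n = \pair{f\,u}{v}$.

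By Lemma~\ref{lemma:charomegaeq}~(\ref{item:factorp}) applied to the parallel $n$-cells $f\,u \para v$ of $Y$, such a $\beta$ exists if and only if $f\,u \eqv v$. Hence the commutative square factors through the generic $n$-square precisely when there is an $n$-cell $u$ in $X$ extending the prescribed boundary such that $f\,u \eqv v$. Comparing with Definition~\ref{def:weq}: the $n = 0$ case is exactly condition~(i), and the $n \geq 1$ case (reindexing the definition's ``$n$'' to $n{-}1$) is exactly condition~(ii). Requiring the factorization for every $n \in \NN$ is therefore equivalent to $f \in \Weq$.

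The only delicate point is the translation of the lower-right factor $\PP n \to Y$ into cell data, which is precisely what Lemma~\ref{lemma:charomegaeq} handles; the rest is bookkeeping via Yoneda and the pushouts defining the cell complexes involved. No new construction is needed beyond those already available.
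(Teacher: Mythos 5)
Your proposal is correct and follows essentially the same route as the paper: translate the square and its factorization into cell data via Yoneda and the pushout defining $\DO{n{+}1}$, then invoke Lemma~\ref{lemma:charomegaeq} (the equivalence of~(\ref{item:eqv}) and~(\ref{item:factorp})) to identify the existence of the arrow $\PP n\to Y$ with the condition $f\,u\eqv v$. The only difference is presentational: you establish the full equivalence in one pass, whereas the paper spells out the forward direction case by case and dismisses the converse as the same argument.
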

\begin{proof} Let $f : X \to Y$ be an \weq, and consider a commutative diagram
  \begin{displaymath}
    \begin{xy}
      \xymatrix{\DO n\ar[d]_{\ii n}\ar[r] & X\ar[d]^f\\
                \OO n\ar[r] & Y}
    \end{xy}.
  \end{displaymath}
We show that it factors through the generic $n$-square: 
\begin{itemizeminus}
\item If $n = 0$, the commutative square is given by some 0-cell $y$ in $Y$:
\[
\begin{xy}
\xymatrix{\ZERO \ar[r]^{} \ar[d] & X \ar[d]^{f} \\ 
\SNG \ar[r]_{\sng y} & Y}
\end{xy}
\]
Since $f$ is in $\Weq$, there is a 0-cell $x$ in $X$ such that $f \, x \eqv y$, and by the previous lemma, we get $q : \PP 0 \to Y$ such that $q \circ \kk 0 = \pair {f \, x} y$, which means that the following diagram commutes:
\[
\begin{xy}
\xymatrix{\ZERO \uar[rr]^{} \ar[r] \ar[d] & \SNG \ar[d]^{\jj 0} \ear[r]_{\sng x} & X \ar[d]^{f} \\ 
\SNG \ar[r]^{\jjj 0} \dar[rr]_{\sng y} & \PP 0 \ear[r]^{q} & Y}
\end{xy}
\]
\item If $n > 0$, the commutative square is given by $n{-}1$-cells $x \para x'$ in $X$ and some $n$-cell $v : f \, x \to f \, x'$ in $Y$:
\[
\begin{xy}
\xymatrix{\DO n \ar[r]^{\pair x {x'}} \ar[d]_{\ii n} & X \ar[d]^{f} \\ 
\OO n \ar[r]_{\sng v} & Y}
\end{xy}
\]
Since $f$ is in $\Weq$, there is $u : x \to x'$ in $X$ such that $f \, u \eqv v$, and by Lemma~\ref{lemma:charomegaeq}, we get $q : \PP n \to Y$ such that $q \circ \kk n = \pair {f \, u} v$, which means that the following diagram commutes:
\[
\begin{xy}
\xymatrix{\DO n \uar[rr]^{\pair x {x'}} \ar[r]_{\ii n} \ar[d]_{\ii n} & \OO n \ar[d]^{\jj n} \ear[r]_{\sng u} & X \ar[d]^{f} \\ 
\OO n \ar[r]^{\jjj n} \dar[rr]_{\sng v} & \PP n \ear[r]^{q} & Y}
\end{xy}
\]
\end{itemizeminus}
The converse is proved by the same argument.
\end{proof}

\sssec

\begin{corollary} \label{cor:solutionset}

The class $\Weq$ of \weqs{} admits the solution set $\J = \{ \jj n | n
\in \NN \}$.

\end{corollary}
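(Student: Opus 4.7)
My plan is to observe that Corollary \ref{cor:solutionset} is essentially packaged in Proposition \ref{prop:charweqbis} together with Remark \ref{rem:solutionset}; all that remains is to unpack the definition of solution set at $\I$ (as given in Section \ref{subsec:solset}) and verify the two ingredients it requires.

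First, I would establish the inclusion $\J \subseteq \Icof{I} \cap \Weq$ demanded by condition \sfour. The fact that $\jj n \in \Weq$ is noted in Remark \ref{rem:solutionset}: it follows from $\pp n \in \Tfb \subseteq \Weq$, the identity $\pp n \circ \jj n = \id_{\OO n}$, and Lemma \ref{lemma:rightinv} applied to this retraction. The membership $\jj n \in \Icof{I}$ is also recorded there, since $\jj n$ arises by a pushout of $\ii n$ followed by composition with a morphism in $\Icof{I}$, and $\Icof{I}$ is closed under both operations by Proposition \ref{prop:closure}.

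Next, for each $\ii n \in \I$, I would take $\J_{\ii n} \deq \{\jj n\}$ as the local solution set. Given any commutative square
\[
\begin{xy}
\xymatrix{\DO n \ar[d]_{\ii n} \ar[r] & X \ar[d]^{f} \\
\OO n \ar[r] & Y}
\end{xy}
\]
with $f \in \Weq$, Proposition \ref{prop:charweqbis} furnishes a factorization through the generic $n$-square, whose middle vertical arrow is precisely $\jj n$. This is exactly the factorization through an element of $\J_{\ii n}$ that the definition of solution set requires. Taking the union over $n$, we obtain the solution set $\J = \bigcup_{n \in \NN} \J_{\ii n} = \{\jj n \mid n \in \NN\}$ at $\I$.

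There is no real obstacle here: the genuine work was carried out in the construction of the generic $n$-square and in the proof of Proposition \ref{prop:charweqbis}. The content of the corollary is merely the observation that this construction is functorial enough to serve as a \emph{set}-sized witness for \sfour, and in fact a singleton suffices for each $\ii n$.
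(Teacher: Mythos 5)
Your proposal is correct and follows exactly the paper's intended argument: the corollary is an immediate consequence of Proposition~\ref{prop:charweqbis} (factorization through the generic $n$-square, whose middle arrow is $\jj n$) combined with Remark~\ref{rem:solutionset} (which gives $\jj n\in\Icof{I}\cap\Weq$), with $\J_{\ii n}=\{\jj n\}$ as the singleton solution set at each $\ii n$. Nothing is missing.
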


We may finally state the central result of this work:

\begin{theorem}\label{th:main}
$\ocat$ is a combinatorial model category. Its class of
weak equivalences is the class $\Weq$ of \weqs{} while $\I$ and $\J$ are
the sets of generating cofibrations and generating trivial
cofibrations, respectively.
\end{theorem}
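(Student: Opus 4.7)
The plan is to apply Smith's theorem (Theorem~\ref{thm:smith}) to the locally presentable category $\ocat$ together with the set $\I=\{\ii n\mid n\in\NN\}$ and the class $\Weq$. The underlying category is locally finitely presentable by Proposition~\ref{prop:locfp}, so the hypotheses of Theorem~\ref{thm:smith} apply once the four conditions \sone, \stwo, \sthree, \sfour are checked. At that point the conclusion is automatic: $\Weq$ is the class of weak equivalences, $\I$ is the set of generating cofibrations, and (provided we make the right choice of solution set) $\J$ is the set of generating trivial cofibrations.

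The verification proceeds by simply gathering the results already established in Section~\ref{sec:folkmodel}. For \sone, the 3-for-2 property splits into three implications, and we have them all at our disposal: closure under composition is Lemma~\ref{lemma:compos}, stability under right cancellation is Lemma~\ref{lemma:rightinv}, and stability under left cancellation is Lemma~\ref{lemma:leftinv}; retract stability is the first half of Lemma~\ref{lemma:retracttransf}. Condition \stwo is nothing but Remark~\ref{rem:S2}. Condition \sthree has two halves: closure of $\Cof\cap\Weq$ under transfinite composition is Corollary~\ref{cor:retracttrans}, and the more delicate closure under pushout is Corollary~\ref{cor:thepushout}, which is where the immersion machinery of Section~\ref{subsec:immersions} pays off.

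The remaining condition \sfour demands that $\Weq$ admit a solution set at $\I$ contained in $\Icof{I}\cap\Weq$. For this I take $\J=\{\jj n\mid n\in\NN\}$, as suggested by Corollary~\ref{cor:solutionset}. Two things must be checked: the factorization property for every commutative square with left edge in $\I$ and right edge in $\Weq$, and the inclusion $\J\subseteq\Icof{I}\cap\Weq$. The factorization is given by Proposition~\ref{prop:charweqbis}, since the generic $n$-square is precisely a factorization through $\jj n$. The inclusion is Remark~\ref{rem:solutionset}: $\jj n\in\Weq$ because $\pp n\in\Tfb\subseteq\Weq$ and $\pp n\circ\jj n=\id_{\OO n}$ force $\jj n\in\Weq$ by Lemma~\ref{lemma:rightinv}, while $\jj n\in\Icof{I}$ because $\Icof{I}$ is closed under composition and pushout.

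There is no real obstacle at this stage: all the hard work (the construction of the functor $\cnx$, the gluing factorization, the notion of immersion, and the generic square) has already been carried out precisely in order to make this verification mechanical. Once \sone--\sfour are assembled, invoking Theorem~\ref{thm:smith} delivers the desired combinatorial model structure on $\ocat$ with the stated generating data.
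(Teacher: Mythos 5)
Your proposal is correct and follows exactly the same route as the paper: apply Smith's theorem to the locally presentable category $\ocat$ (Proposition~\ref{prop:locfp}) and verify \sone{} via Lemmas~\ref{lemma:compos}, \ref{lemma:rightinv}, \ref{lemma:leftinv} and~\ref{lemma:retracttransf}, \stwo{} via Remark~\ref{rem:S2}, \sthree{} via Corollaries~\ref{cor:retracttrans} and~\ref{cor:thepushout}, and \sfour{} via Corollary~\ref{cor:solutionset}. Your extra unpacking of \sfour{} through Proposition~\ref{prop:charweqbis} and Remark~\ref{rem:solutionset} is just making explicit what the paper leaves to the cited corollary.
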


\begin{proof} $\ocat$ is locally presentable by proposition
  \ref{prop:locfp} while
  \begin{itemizeminus}

  \item condition $\sone$ holds by lemma \ref{lemma:compos}, lemma
    \ref{lemma:rightinv}, lemma \ref{lemma:leftinv} and lemma
    \ref{lemma:retracttransf};

  \item condition $\stwo$ holds by remark \ref{rem:S2};
  \item condition $\sthree$ holds by corollary \ref{cor:retracttrans}
    and corollary \ref{cor:thepushout};
  \item condition $\sfour$ holds by corollary \ref{cor:solutionset}.
  \end{itemizeminus}

\end{proof}

\begin{rem}

By corollary \ref{cor:minimal}, the model structure of theorem \ref{th:main} is \emph{left-determined}
in the sense of \cite{rosickytholen:leftdm}.

\end{rem}

\section{Fibrant and cofibrant objects}\label{sec:cofibrant}

Recall that, given a model category $\ctg{C}$,  an object $X$ of $\ctg{C}$ is {\em fibrant} if the unique morphism $!_X:X\to 1$ is a fibration. Dually, $X$ is {\em cofibrant} if the unique morphism $0_X:0\to X$ is a cofibration. Now $X$ is fibrant if and only if, for any trivial cofibration $f:Y\to Z$ and any $u:Y\to X$, there is a $v:Z\to X$ such that $v\circ f=u$: in fact, this implies that $!_X:X\to 1$ has the right-lifting property with respect to trivial cofibrations.
\begin{displaymath}
    \begin{xy}
      \xymatrix{Y\ar[r]^u\ar[d]_f & X\ar[d]^{!_X}\\
                Z\ar[r]_{!_Z}\ear[ur]_v & 1}
    \end{xy}
  \end{displaymath}
Likewise, $X$ is cofibrant if and only if for any trivial fibration $p:Y\to Z$ and any morphism $u:X\to Z$ there is a lift $v:X\to Y$ such that $p\circ v=u$.
\begin{displaymath}
    \begin{xy}
      \xymatrix{0\ar[r]^{0_Y}\ar[d]_{0_X} &Y\ar[d]^p \\
               X\ar[r]_u\ear[ur]^v & Z }
    \end{xy}
  \end{displaymath}

\subsection{Fibrant $\omega$-categories}\label{subsec:fibocat}

In the folk model structure on $\ocat$, the characterization of fibrant objects is the simplest possible, as shown by the following result.
\begin{proposition}\label{prop:fibrant}
  All $\omega$-categories are fibrant.
\end{proposition}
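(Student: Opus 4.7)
The plan is to check directly that the terminal map $!_X\colon X\to 1$ belongs to $\mathcal{F}=\Ifib{J}$, where $J=\{\jj n \mid n\in\NN\}$ is the set of generating trivial cofibrations produced in Section~\ref{subsec:generic}. Since every square over $1$ commutes trivially, what has to be shown is that for each $n\in\NN$ and each $u\colon \OO n \to X$, there exists $v\colon \PP n \to X$ with $v\circ \jj n = u$.

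First I would note that $u$ is determined by a single $n$-cell $x:=u(o)\in X$, namely $u=\sng x$, where $o$ is the unique proper $n$-cell of $\OO n$. By Proposition~\ref{prop:congruence}~(\ref{item:reflex}), the relation $\eqv$ is reflexive, so $x\eqv x$. Now apply Lemma~\ref{lemma:charomegaeq} to the parallel pair $(x,x)$: condition~(\ref{item:eqv}) implies condition~(\ref{item:factorp}), yielding an $\omega$-functor $q\colon \PP n\to X$ such that
\[
q\circ\kk n \;=\; \pair{x}{x}\colon \DO{n{+}1}\to X.
\]

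To conclude, recall from the construction of $\PP n$ at the beginning of Section~\ref{subsec:generic} that $\jj n$ is defined as the composite of $\kk n$ with the ``top'' map $\OO n\to\DO{n{+}1}$ of the pushout~(\ref{eq:pushout}). Since $\pair{x}{x}$ precomposed with either map of that pushout gives $\sng x$, we obtain
\[
q\circ\jj n \;=\; \pair{x}{x}\circ(\text{top map}) \;=\; \sng x \;=\; u.
\]
Hence $v:=q$ is the desired lift, proving that $!_X\in\Ifib{J}=\mathcal{F}$ and therefore that $X$ is fibrant. There is no real obstacle: the argument is simply the combination of reflexivity of $\omega$-equivalence with the characterization of $\eqv$ through factorizations across $\kk n$ provided by Lemma~\ref{lemma:charomegaeq}.
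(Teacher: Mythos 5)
Your proof is correct, but it takes a genuinely different route from the paper. The paper argues at the level of arbitrary trivial cofibrations: by Corollary~\ref{cor:trivcofimm} every trivial cofibration $f:Y\to Z$ is an immersion, hence by \zone{} admits a retraction $g$ with $g\circ f=\id_Y$, and $v=u\circ g$ is the required extension of any $u:Y\to X$. You instead reduce to the generating trivial cofibrations $\jj n$ (legitimate, since $\mathcal F=\Ifib{J}$ and $\Ifib{J}=\Ifib{(\Icof J)}$), and solve the lifting problem for $\sng x$ by combining reflexivity of $\eqv$ with Lemma~\ref{lemma:charomegaeq}\,(\ref{item:eqv})$\Rightarrow$(\ref{item:factorp}). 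Both arguments are really exploiting a retraction: the paper retracts the trivial cofibration itself, while your lift can be produced directly as $v=u\circ\pp n$, since $\pp n\circ\jj n=\id_{\OO n}$ and hence $u\circ\pp n\circ\jj n=u$ --- this bypasses Lemma~\ref{lemma:charomegaeq} entirely and is the degenerate case ($x=x'$, $q=\sng x\circ\pp n$) of the factorization you invoke. The paper's proof buys uniformity (no appeal to the explicit generators or to the characterization of $\eqv$); yours buys concreteness, making visible that fibrancy of every object comes down to the split epimorphism $\pp n$, i.e.\ ultimately to reflexivity of $\omega$-equivalence.
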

\begin{proof}
  Let $X$ be an $\omega$-category, $f:Y\to Z$ a trivial cofibration, and $u:Y\to X$ an $\omega$-functor. By Corollary~\ref{cor:trivcofimm}, $f$ is an immersion. In particular there is a retraction $g:Z\to Y$ such that $g\circ f=\id_Y$. Let $v=u\circ g$. We get $v\circ f= u\circ g\circ f=u$. Hence $X$ is fibrant.
  \begin{displaymath}
    \begin{xy}
      \xymatrix{Y\ar[r]^u\ar[d]^f & X\\
                Z\ear[ur]_v\uar[u]^g & }
    \end{xy}
  \end{displaymath}
\end{proof}

\subsection{Cofibrant $\omega$-categories}\label{subsec:cofibocat}

Our understanding of the cofibrant objects in $\ocat$ is based on an appropriate notion of {\em freely generated} $\omega$-category: notice that the free $\omega$-categories in the sense of the adjunction between $\ocat$ and $\globset$ are not sufficient, as there are too few of them. We first describe a process of generating free cells in each dimension. In dimension $0$, we just have a set $S_0$ and no operations, so that $S_0$ generates $\free S_0=S_0$. In dimension $1$, given a graph
\begin{displaymath}
  \begin{xy}
    \xymatrix{\free S_0 & \doubl{\sce 0}{\tge 0} S_1}
  \end{xy}
\end{displaymath}
where $\free S_0$ is the set of vertices, $S_1$ the set of edges, and $\sce 0$, $\tge 0$ are the source and target maps, there is a free category generated by it: 
\begin{displaymath}
  \begin{xy}
    \xymatrix{\free S_0 & \doubl{\sce 0}{\tge 0}\free S_1}
  \end{xy}.
\end{displaymath}
Now suppose that we add a new set $S_2$ together with a graph
\begin{displaymath}
  \begin{xy}
    \xymatrix{\free S_1 & \doubl{\sce 1}{\tge 1} S_2}
  \end{xy}
\end{displaymath}
satisfying the boundary conditions $\sce 0\circ\sce 1=\sce 0\circ\tge 1$ and $\tge 0\circ \sce 1=\tge 0\circ\tge 1$. What we get is a {\em computad}, a notion first introduced in~\cite{street:limicf}, freely generating a $2$-category
 \begin{displaymath}
  \begin{xy}
    \xymatrix{\free S_0 & \doubl{\sce 0}{\tge 0}\free S_1 & \doubl{\sce 1}{\tge 1}\free S_2}
  \end{xy}.
\end{displaymath} 
This pattern has been extended to all dimensions, giving rise to {\em $n$-computads}~\cite{power:ncatpt} or {\em polygraphs}~\cite{burroni:higdwp,burroni:highdw}. More precisely, let $\nglobset n$ (resp.\ $\ncat n$) denote the category of $n$-globular sets (resp.\ $n$-categories), we get a commutative diagram
\begin{equation}
  \begin{xy}
    \xymatrix{\ncat{(n{+}1)}\ar[r]\ar[d]_{\TT_n} & \nglobset{(n{+}1)}\ar[d]\\
               \ncat n\ar[r]  & \nglobset n}     
  \end{xy}
\label{eq:ncatnglob}
\end{equation}
where the  horizontal arrows are the obvious forgetful functors and the vertical arrows are truncation functors, removing all $n{+}1$-cells. On the other hand, let $\ncatpl n$ be the category defined by the following pullback square:
\begin{equation}
\xygraph{ 
!{<0cm,0cm>;<1cm,0cm>:<0cm,1cm>::} 
!{(0,0) }*+{\ncatpl{n}}="ul"
!{(0,-1.5) }*+{\ncat n}="dl"
!{(2,0) }*+{ \nglobset{(n{+}1)}}="ur"
!{(2,-1.5) }*+{\nglobset n}="dr"
!{(0.5,-0.5)}*+{}="pulla"
!{(0.3,-0.5)}*+{}="pullb"
!{(0.5,-0.3)}*+{}="pullc"
"ul":"dl"_{\TTT_n}
"ul":"ur"^{}
"ur":"dr"^{}
"dl":"dr"_{}
"pulla":@{-}"pullb"
"pulla":@{-}"pullc"
}
\label{eq:ncatplus}
\end{equation}
From~(\ref{eq:ncatnglob}), we get a unique functor $R_n:\ncat{(n{+}1)}\to \ncatpl n$ such that $\TTT_nR_n=\TT_n$, where $\TT_n$ and $\TTT_n$ are the truncation functors appearing in~(\ref{eq:ncatnglob}) and~(\ref{eq:ncatplus}) repectively. Now the key to the construction of polygraphs is the existence of a left-adjoint $L_n:\ncatpl n\to\ncat{(n{+}1)}$ to this $R_n$. Concretely, if $X$ is an $n$-category and $S_{n{+}1}$ a set of  $n{+}1$-cells attached to $X$ by
\begin{equation}
  \begin{xy}
    \xymatrix{X_n & \doubl{\sce n}{\tge n} S_{n{+}1}}
  \end{xy}
\label{eq:graph}
\end{equation}
satisfying the boundary conditions, then $L_n$ builds an $(n{+}1)$-category whose explicit construction is given in \cite{metayer:cofohc}. Here we just mention the following property of $L_n$: let $X^+$ be an object of $\ncatpl n$ given by an $n$-category
\begin{displaymath}
  \begin{xy}
\xymatrix{X_0 & \cdots \doubl{\sce 0}{\tge 0} & X_n\doubl{\sce{n-1}}{\tge{n-1}}}
  \end{xy}
\end{displaymath}
and a graph~(\ref{eq:graph}) then the $n{+}1$-category $L_n X^+$ has the same $n$-cells as $\TTT_nX^+$. In other words, there is a set of $n{+}1$-cells $\free S_{n{+}1}$ such that $L_n X^+$ has the form
\begin{displaymath}
  \begin{xy}
\xymatrix{X_0 & \cdots\doubl{\sce 0}{\tge 0} & X_n\doubl{\sce{n-1}}{\tge{n-1}} & \free S_{n{+}1}\doubl{\sce n}{\tge n} }
  \end{xy}
\end{displaymath}
\begin{defn}\label{def:npolygraph}
{\em $n$-polygraphs} are defined inductively by the following conditions:
\begin{itemizeminus}
 \item  a {\em $0$-polygraph} is a set $S^{(0)}$;
 \item  an {\em $n{+}1$-polygraph} is an object $S^{(n+1)}$ of $\ncatpl n$ such that $\TTT_n S^{(n+1)}$ is of the form $L_n S^{(n)} $ where $S^{(n)}$ is an $n$-polygraph.
\end{itemizeminus}
Likewise, a {\em polygraph} $S$ is a sequence $(S^{(n)})_{n\in\NN}$ of $n$-polygraphs such that, for each $n$, $\TTT_nS^{(n+1)}=L_nS^{(n)}$.  
\end{defn}
The pullback~(\ref{eq:ncatplus}) gives a notion of morphisms for $\ncatpl n$, which, by induction, determines a notion of morphism between $n$-polygraphs, and polygraphs. Thus we get a category $\pol$ of polygraphs and morphisms. By Definition~\ref{def:npolygraph} and the abovementioned property of $L_n$, we may see a polygraph $S$ as an infinite diagram of the following shape:
\begin{equation}
  \begin{xy}
    \xymatrix{S_0\ar[d]&S_1\ar[d]\Doubld&S_2\ar[d]\Doubld&S_3\ar[d]\Doubld
    &\cdots\Doubld\\
         \free S_0&\free S_1\doubl{}{}&\free S_2\doubl{}{}&
      \free S_3 \doubl{}{}
         &\cdots\doubl{}{}}   
  \end{xy}.
\label{eq:polygraph}
\end{equation}
In~(\ref{eq:polygraph}), each $S_n$ is the set of generators of the $n$-cells, the oblique double arrows represent the attachment of new $n$-cells on the previously defined $n{-}1$-category, thus defining an object $X^+$ of $\ncatpl{(n{-}1)}$, whereas $\free S_n$ is the set of $n$-cells in $L_{n-1}X^+$. The bottom line of ~(\ref{eq:polygraph}) displays the {\em free $\omega$-category} generated by the polygraph $S$. This defines a functor $Q:S\mapsto \free S$ from $\pol$ to $\ocat$, which is in fact a left-adjoint. A detailed description of the right-adjoint $P:X\mapsto P(X)$ from $\ocat$ to $\pol$ is given in~\cite{metayer:respol}. 
\sssec
It is now possible to state the main result of this section: 
\begin{theorem}\label{thm:cofibrant}
  An $\omega$-category is cofibrant if and only if it is freely generated by a polygraph.
\end{theorem}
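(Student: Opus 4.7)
The plan is to prove the two implications separately, relying on the explicit cell-by-cell description of $\free S$ from Section~\ref{subsec:cofibocat} and on the combinatorial analysis of polygraphs carried out in~\cite{metayer:cofohc}.

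For the direction ``freely generated implies cofibrant'', given a polygraph $S = (S^{(n)})_{n \in \NN}$, I would exhibit $0 \to \free S$ as a relative $\I$-cell complex, whence $\free S \in \Icell{\I} \subseteq \Cof$ by Proposition~\ref{prop:closure}. Starting from the initial object $0$ and using that $\DO 0 = 0$, the set $S_0$ is attached via a coproduct of pushouts of $\ii 0$. Inductively, assuming $\free S^{(n)}$ has been built as an $\I$-cell complex, the polygraph structure assigns to each generator $s \in S_{n+1}$ a pair of parallel $n$-cells in $\free S^{(n)}$, that is, an $\omega$-functor $\DO{n+1} \to \free S^{(n)}$; forming the pushout of $\coprod_{s \in S_{n+1}} \ii{n+1}$ along these attachment data produces $\free S^{(n+1)}$, still in $\Icell{\I}$. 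A transfinite composition over dimensions $n$ then yields $\free S$.

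For the converse, let $X$ be cofibrant. I would apply Proposition~\ref{prop:smallobject} to the unique morphism $0 \to X$ to obtain a factorization
\[
\xymatrix{0 \ar[r]^{i} & Z \ar[r]^{p} & X}
\]
with $i \in \Icell{\I}$ and $p \in \Tfb$. Reorganizing the transfinite composition by dimension — which is legitimate since attaching higher-dimensional cells does not affect the lower-dimensional truncations — exhibits $Z$ as $\free T$ for some polygraph $T$, via the explicit machinery of~\cite{metayer:cofohc}. Since $X$ is cofibrant, the lifting property applied to the square with left edge $0 \to X$ and right edge $p$ yields a section $s : X \to Z$ with $p \circ s = \id_X$, so $X$ is a retract of $\free T$. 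The last step invokes the main result of~\cite{metayer:cofohc}: retracts of free $\omega$-categories on polygraphs are themselves freely generated by polygraphs, producing the desired $S$ with $X \cong \free S$.

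The main obstacle is precisely this final retract-closure step. The cell-complex description of $\free S$, the small object argument, and the lifting-and-section argument are all straightforward consequences of the model-theoretic machinery already developed. The genuinely hard combinatorial content — that the class of $\omega$-categories of the form $\free S$ for $S$ a polygraph is stable under retracts in $\ocat$ — lies outside this framework, and this is exactly why the theorem is presented as an interpretation of the results of~\cite{metayer:cofohc} in the language of the folk model structure.
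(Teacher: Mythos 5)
Your proof is correct and, in the hard direction, coincides with the paper's: both reduce the problem to exhibiting a cofibrant $X$ as a retract of some $\free T$ and then invoke the main result of~\cite{metayer:cofohc}, namely that the full subcategory of $\omega$-categories free on polygraphs is closed under retracts (equivalently, is Cauchy complete). The differences lie in the routes to the two intermediate facts. For ``free implies cofibrant'' the paper lifts directly against an arbitrary trivial fibration, building the lift dimensionwise from the universal property of the functors $L_n$; you instead show that $0\to\free S$ lies in $\Icell{\I}$ by attaching generators dimension by dimension, which is equally valid and has the added benefit of identifying free objects as $\I$-cell complexes. For the converse, you manufacture the free object by running the small object argument on $0\to X$ and then claiming that the resulting relative $\I$-cell complex reorganizes into a polygraph; this is true, but it is itself a nontrivial piece of the combinatorics of~\cite{metayer:cofohc}, and your one-sentence justification (higher cells do not disturb lower truncations) glosses over the fact that the small object argument attaches cells of all dimensions simultaneously at every stage. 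The paper sidesteps this entirely: it already has from~\cite{metayer:respol} the standard resolution $\epsilon_X:QP(X)\to X$, a trivial fibration with free domain, so lifting $\id_X$ against $\epsilon_X$ immediately exhibits $X$ as a retract of the free object $QP(X)$. If you replace your small-object factorization by this standard resolution, your argument becomes exactly the paper's, with the sole remaining burden being the retract-closure statement that you correctly isolate as the genuinely hard step.
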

Suppose that $X$ is freely generated by a polygraph $S$, $p:Y\to Z$ is a trivial fibration and $u:X\to Z$ is an $\omega$-functor.  It is easy to build a lift $v:X\to Y$ such that $p\circ v=u$ dimensionwise by using the universal property of the functors $L_n$.
\begin{displaymath}
  \begin{xy}
    \xymatrix{ & Y\ar[d]^p\\
              \free S\ar[r]_u\ear[ur]^v & Z}
  \end{xy}
\end{displaymath}
Thus freely generated $\omega$-categories are cofibrant. The proof of the converse is much harder, and is the main purpose of~\cite{metayer:cofohc}. The problem reduces to the fact that the full subcategory of $\ocat$ whose objects are free on polygraphs is Cauchy complete, meaning that its idempotent morphisms split. 
\sssec
The results of~\cite{metayer:respol} may be revisited in the framework of the folk model structure on $\ocat$. In fact, a resolution of an $\omega$-category $X$ by a polygraph $S$ is a trivial fibration $\free S\to X$, hence a cofibrant replacement of $X$. Notice that for each $\omega$-category $X$, the counit of the adjunction between $\pol$ and $\ocat$ gives an $\omega$-functor
\begin{displaymath}
  \epsilon_X:QP(X)\to X
\end{displaymath}
which is a trivial fibration, and defines the {\em standard resolution} of $X$.

\section{Model structure on $\ncat{n}$}\label{sec:ncat}

In this section, we show that the model structure on $\ocat$ we just
described yields a model structure
on the category $\ncat{n}$ of (strict, small) $n$-categories
for each integer $n\geq 1$. In particular, we recover the known
folk model structures on $\cat$~\cite{joyaltierney:strscs} and
$\twocat$~\cite{lack:quitwo,lack:quibic}.

\sssec

Let $n\geq 1$ be a fixed integer. There is an inclusion functor
\begin{displaymath}
  \II:\ncat{n}\to\ocat
\end{displaymath}
which simply adds all necessary unit cells in dimensions $k>n$.
This functor $\II$ has a left adjoint
\begin{displaymath}
  \SH:\ocat\to\ncat{n}.
\end{displaymath}
Precisely, if $X$ is an $\omega$-category and $0\leq k\leq n$,
the $k$-cells of $\SH X$ are exactly those of $X$ for $k<n$,
whereas $(\SH X)_n$ is the quotient of $X_n$ modulo the {\em congruence
generated} by $X_{n{+}1}$. In other words, parallel
$n$-cells $x$, $y$ in $X$ are {\em congruent modulo $X_{n{+}1}$}
if and only if there is a sequence $x_0=x,x_1,\ldots,x_p=y$ of
$n$-cells and a sequence $z_1,\ldots,z_p$ of $n{+}1$-cells such that,
for each $i=1,\ldots,p$ either $z_i:x_{i{-}1}\To n x_i$ or
$z_i: x_i\To n x_{i{-}1}$.

\ms

Notice that the functor $\II$ also has a right adjoint, namely 
the {\em truncation} functor $\TT:\ocat\to\ncat{n}$ which simply forgets
all cells of dimension $k>n$.

\begin{theorem}
  The inclusion functor $\II:\ncat{n}\to\ocat$ creates a model
   structure on $\ncat{n}$, in which the weak equivalences are the $n$-functors
   $f$ such that $\II(f)\in\Weq$, and $(\SH(\ii k))_{k\in\NN}$ is a
   family of generating cofibrations.
\end{theorem}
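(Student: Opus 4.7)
The strategy is to verify the hypotheses of Smith's theorem~\ref{thm:smith} for $\ncat{n}$ with weak equivalences $\Weq_n := \II^{-1}(\Weq)$ and generating cofibrations $\I_n := \{\SH(\ii k) : k \in \NN\}$. Two structural facts make most of the work routine: $\ncat{n}$ is locally finitely presentable (by the same reasoning as Proposition~\ref{prop:locfp}), and the fully faithful functor $\II$ has both a left adjoint $\SH$ and a right adjoint $\TT$, so it preserves all small limits and colimits, and the counit $\SH \circ \II \to \id_{\ncat{n}}$ is an isomorphism.

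Conditions~\sone and~\stwo are almost immediate. Since $\II$ is a functor and $\Weq$ has the 3-for-2 property and is retract-stable, so is $\Weq_n$. The lifting bijection from the adjunction $\SH \dashv \II$ gives $\Ifib{\I_n} = \II^{-1}(\Tfb)$, and by Remark~\ref{rem:S2} this is contained in $\II^{-1}(\Weq) = \Weq_n$. Condition~\sfour can be handled by reproducing the generic-square construction of Section~\ref{subsec:generic} internally in $\ncat{n}$: the small object argument is available since $\ncat{n}$ is locally presentable, so one obtains morphisms $\jj{n}^{(k)} \in \Icof{\I_n}$ and the proofs of Lemma~\ref{lemma:charomegaeq}, Proposition~\ref{prop:charweqbis}, and Corollary~\ref{cor:solutionset} transfer with only cosmetic changes.

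The main obstacle lies in~\sthree, and specifically in the closure of $\Icof{\I_n} \cap \Weq_n$ under pushouts. Closure under transfinite composition follows easily from $\II$ preserving colimits together with Lemma~\ref{lemma:retracttransf}. For pushouts, my plan is to transport the immersion machinery of Sections~\ref{subsec:connect}--\ref{subsec:immersions} to $\ncat{n}$: set $\Cnx_n := \SH \circ \Cnx \circ \II$, and obtain $\Top_n, \Bot_n : \Cnx_n \to \id$ together with $\Triv_n : \id \to \Cnx_n$ by applying $\SH$ to the corresponding data in $\ocat$ (using that $\SH \circ \II \cong \id$). Define an $n$-immersion by conditions~\zone--\zthree relative to $\Cnx_n$. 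Once the analogues of Corollary~\ref{cor:trivcofimm}, Lemma~\ref{lemma:immweq}, and Lemma~\ref{lemma:pushoutimm} are in hand, the argument of Corollary~\ref{cor:thepushout} yields the desired closure.

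The delicate step will be verifying that the transported cylinder functor $\Cnx_n$ still enjoys the topdown/bottom-up transport property of Lemma~\ref{lemma:transport}. The coinductive definition of $\omega$-equivalence and of reversibility interacts nontrivially with the truncation-and-quotient performed by $\SH$: one must check that identifying parallel $n$-cells related by a reversible $(n+1)$-cell in $\Cnx \II X$ does not destroy the structural content needed to fill cylinders, and conversely that an $n$-cylinder in $\Cnx_n X$ can always be lifted to an honest cylinder in $\Cnx \II X$ at the price of replacing cells by $\omega$-equivalent ones. Once this point is settled, the remaining adaptations and Step~\sfour are straightforward transcriptions of their $\omega$-categorical counterparts.
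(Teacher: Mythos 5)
Your route is genuinely different from the paper's: you verify Smith's conditions \sone--\sfour directly in $\ncat{n}$, whereas the paper invokes Beke's transfer theorem along the adjunction $\SH\dashv\II$ and reduces everything to conditions \cone--\cfive, of which only \cfive (a pushout of $\SH(j)$ in $\ncat{n}$ is carried by $\II$ into $\Weq$) requires real work. Both routes funnel through the same bottleneck, namely making the cylinder/immersion machinery commute with $\SH$ and $\II$. Your dispatching of \sone, of \stwo via the lifting bijection $\Ifib{\I_n}=\II^{-1}(\Tfb)$, and of the transfinite-composition half of \sthree is correct, and the internal generic-square construction for \sfour is workable (with the small remark that the internal $\jj{}$'s lie in $\Weq_n$ by two-out-of-three from $\Ifib{\I_n}\subseteq\Weq_n$; this is not free, since $\II\SH(\jj k)\in\Weq$ is essentially condition \cfive itself).

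The gap is precisely the step you flag as ``delicate'' and leave unresolved, and without it your pushout argument for \sthree does not close. The resolution is a concrete observation that you are missing and that is the pivot of the paper's own proof (equation~(\ref{eq:gcg}) and Lemma~\ref{lemma:gimm}): if $X$ is an $n$-category, every $k$-cell of $\cnx\II(X)$ with $k>n$ is a unit, because by the appendix construction a cylinder all of whose constituent cells are units is itself a unit. Consequently $\cnx\II(X)$ already lies in the image of $\II$, the congruence by which $\SH$ quotients its $n$-cells is generated by units and is therefore the equality, and your $\Cnx_n:=\SH\circ\cnx\circ\II$ satisfies $\II\Cnx_n(X)=\cnx\II(X)$ on the nose. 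There is no quotient to undo and no cylinder to lift ``up to $\omega$-equivalence'': Lemma~\ref{lemma:transport}, the gluing factorization (note that $\II$, having the left adjoint $\SH$, preserves the defining pullback, so $\II$ applied to the $n$-categorical gluing object of $f$ is $\Glu{\II(f)}$), and the analogues of Corollary~\ref{cor:trivcofimm}, Lemma~\ref{lemma:immweq} and Lemma~\ref{lemma:pushoutimm} then transfer verbatim. The paper packages the same fact as a natural transformation $\lambda:\GG\cnx\to\cnx\GG$ for the idempotent monad $\GG=\II\SH$ and shows that $\GG$ preserves immersions; once you add this observation, your argument and the paper's become two presentations of the same mechanism.
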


The general situation is investigated in~\cite{beke:shhtwo}, 
whose proposition~2.3 states 
sufficient conditions for the transport of a model structure
along an adjunction. In our particular case, these
conditions boil down to the following: 
\begin{enumerate}
  \item[\cone] the model structure on $\ocat$ is cofibrantly generated;
  \item[\ctwo] $\ncat{n}$ is locally presentable;
  \item[\cthree] $\Weq$ is closed under filtered colimits in $\ocat$;
  \item[\cfour] $\II$ preserves filtered colimits;
  \item[\cfive]  If $j\in J$ is a generating trivial cofibration of $\ocat$,
            and $g$ is a pushout of $\SH(j)$ in $\ncat{n}$,
            then $\II(g)$ is a weak equivalence in $\ocat$.
\end{enumerate}
Conditions \cone and \ctwo are known already. Condition \cthree 
follows from the definition of weak equivalences and the fact that
the $\omega$-categories $\OO n$ are finitely presentable
objects in $\ocat$. The functor $\II$, being
left adjoint to $\TT$, preserves all colimits, in particular filtered
ones, hence \cfour. 

\sssec

We now turn to the proof of the remaining condition \cfive. 
First remark that 
$\SH\II$ is the identity on $\ncat{n}$, so that
the monad $\GG=\II\SH$ is idempotent and the monad multiplication
$\mu:\GG^2\to \GG$ is the identity.  As a consequence, if $\eta:1\to\GG$
denotes the unit of the monad, for each $\omega$-category $X$
\begin{equation}
  \GG(\eta_X) =\unit{\GG(X)}.
  \label{eq:geta}
\end{equation}
Also, for each $\omega$-functor of the form $u:\GG(X)\to\GG(Y)$,
\begin{equation}
  \GG(u)=u.
  \label{eq:guu}
\end{equation}   
Now let $X$ be an $\omega$-category. For each $k>n$, all $k$-cells
of $\GG(X)$ are units. Therefore, by construction of the connection functor
$\cnx$, all $k$-cells in $\cnx\GG(X)$ are also units, which implies that
$\cnx\GG(X)$ belongs to the image of $\II$, whence
\begin{equation}
  \GG\cnx\GG(X) = \cnx\GG(X).
  \label{eq:gcg}
\end{equation}
We successively get the natural 
transformations:
\begin{displaymath}
  \eta_X : X\to\GG(X),\qquad
  \cnx(\eta_X) : \cnx(X)\to\cnx\GG(X),\qquad
  \GG\cnx(\eta_X) : \GG\cnx(X)\to \GG\cnx\GG(X)=\cnx\GG(X),
\end{displaymath}
by~(\ref{eq:gcg}). Thus $\lambda_X=\GG\cnx(\eta_X)$ yields a natural
transformation
\begin{displaymath}
  \lambda:\GG\cnx \to \cnx\GG.
\end{displaymath}

\begin{lemma}\label{lemma:gimm}
  The monad $\GG$ on $\ocat$ preserves immersions.
\end{lemma}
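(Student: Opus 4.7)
The plan is to explicitly transport the immersion data for $f:X\to Y$ along the monad $\GG$, using the natural transformation $\lambda:\GG\cnx\to\cnx\GG$ that was just constructed. So let $f:X\to Y$ be an immersion, witnessed by a retraction $g:Y\to X$ and an $\omega$-functor $h:Y\to\cnx Y$ satisfying \zone--\zthree. The natural candidates for witnessing that $\GG(f):\GG(X)\to\GG(Y)$ is an immersion are
\[
g'\;\deq\;\GG(g)\colon\GG(Y)\to\GG(X),\qquad
h'\;\deq\;\lambda_Y\circ\GG(h)\colon\GG(Y)\to\cnx\GG(Y).
\]
Condition \zone is then immediate: $g'\circ\GG(f)=\GG(g\circ f)=\GG(\id_X)=\id_{\GG(X)}$.

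The crux of the argument is a short preliminary computation establishing three compatibility relations for $\lambda$, which express that $\lambda$ intertwines $\Top$, $\Bot$ and $\Triv$ with the functor $\GG$:
\begin{eqnarray*}
\Top_{\GG(Y)}\circ\lambda_Y &=& \GG(\Top_Y),\\
\Bot_{\GG(Y)}\circ\lambda_Y &=& \GG(\Bot_Y),\\
\lambda_Y\circ\GG(\Triv_Y) &=& \Triv_{\GG(Y)}.
\end{eqnarray*}
Each of these follows the same template. For the first, start from the naturality square $\Top_{\GG(Y)}\circ\cnx(\eta_Y)=\eta_Y\circ\Top_Y$, apply $\GG$, and use~(\ref{eq:geta}) to kill $\GG(\eta_Y)$ on the left-hand side, together with the identification $\GG\cnx\GG(Y)=\cnx\GG(Y)$ from~(\ref{eq:gcg}) and~(\ref{eq:guu}) to identify $\GG(\Top_{\GG(Y)})$ with $\Top_{\GG(Y)}$. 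The second is symmetric, and the third comes from the naturality of $\Triv$ treated in exactly the same way.

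Given these three relations, the verification of \ztwo and \zthree for $(\GG(f),g',h')$ is routine bookkeeping via functoriality of $\GG$. For instance,
\[
\Top_{\GG(Y)}\circ h' \;=\; \GG(\Top_Y)\circ\GG(h) \;=\; \GG(\Top_Y\circ h) \;=\; \GG(f\circ g) \;=\; \GG(f)\circ g',
\]
and similarly $\Bot_{\GG(Y)}\circ h'=\GG(\Bot_Y\circ h)=\GG(\id_Y)=\id_{\GG(Y)}$, giving \ztwo; and
\[
h'\circ\GG(f)\;=\;\lambda_Y\circ\GG(h\circ f)\;=\;\lambda_Y\circ\GG(\Triv_Y\circ f)\;=\;\Triv_{\GG(Y)}\circ\GG(f),
\]
giving \zthree.

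The only point requiring genuine attention is the preliminary computation of the three intertwining relations; this is where the two monad identities~(\ref{eq:geta}) and~(\ref{eq:guu}) and the collapse $\GG\cnx\GG=\cnx\GG$ of~(\ref{eq:gcg}) are all used crucially. Once these are in hand, the lemma is essentially a formal consequence of the definition of an immersion and the functoriality of $\GG$ and $\cnx$.
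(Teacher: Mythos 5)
Your proof is correct and follows essentially the same route as the paper's: the same witnesses $g'=\GG(g)$ and $h'=\lambda_Y\circ\GG(h)$, and the same derivation of the intertwining relations from the naturality squares for $\Top$, $\Bot$, $\Triv$ combined with the idempotent-monad identities and the collapse $\GG\cnx\GG=\cnx\GG$. The only cosmetic difference is that you isolate all three intertwining relations up front, where the paper spells out the $\Top$ case in detail and invokes the same argument for $\Bot$ and $\Triv$.
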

\begin{proof}
Let $f:X\to Y$ be an immersion. We want to show that
$f'=\GG(f)$ is still an immersion. By Definition~\ref{def:immersions}, there are 
$g:Y\to X$ and $h:Y\to\cnx(Y)$ such that
\begin{eqnarray*}
  g\circ f & = & \id ;\\
  \Top_Y\circ h & = & f\circ g ; \\
   \Bot_Y\circ h & = & \id ;\\
   h\circ f & = & \Triv_Y\circ f.
\end{eqnarray*}
Let $g'=\GG(g):\GG(Y)\to\GG(X)$ and 
$h'=\lambda_Y\circ\GG(h):\GG(Y)\to \cnx\GG(Y)$, it is now sufficient
to check the following equations:
\begin{eqnarray}
  g'\circ f' & = & \id ;
   \label{eq:gid}\\
  \Top_{\GG(Y)}\circ h'& = & f'\circ g';
   \label{eq:topg}\\
  \Bot_{\GG(Y)}\circ h' & = & \id ;
   \label{eq:botg}\\
  h'\circ f' & = & \Triv_{\GG(Y)}\circ f'.
    \label{eq:trivg}
\end{eqnarray}
Equation~(\ref{eq:gid}) is obvious from functoriality. As for~(\ref{eq:topg}),
we first notice that, by naturality of $\Top$, the following diagram
commutes:
\begin{displaymath}
  \begin{xy}
    \xymatrix{\cnx(Y)\ar[d]_{\cnx(\eta_Y)}\ar[r]^{\Top_Y} & Y\ar[d]^{\eta_Y} \\
              \cnx\GG(Y)\ar[r]_{\Top_{\GG(Y)}} & \GG(Y)}
  \end{xy}.
\end{displaymath}
By applying $\GG$ to the above diagram, we get
\begin{displaymath}
  \begin{xy}
    \xymatrix{\GG\cnx(Y)\ar[d]_{\lambda_Y}\ar[r]^{\GG(\Top_Y)}
               & \GG(Y)\ar[d]^{\GG(\eta_Y)} \\
              \cnx\GG(Y)\ar[r]_{\GG(\Top_{\GG(Y)})} & \GG(Y)}
  \end{xy}.
\end{displaymath}
Now, by~(\ref{eq:geta}), $\GG(\eta_Y)=\unit{\GG(Y)}$ and because
$\cnx\GG(Y)=\GG\cnx\GG(Y)$, by~(\ref{eq:guu}),
$\GG(\Top_{\GG(Y)})=\Top_{\GG(Y)}$. Hence
\begin{equation}
  \GG(\Top_Y) = \Top_{\GG(Y)}\circ \lambda_Y.
\end{equation}
Thus
\begin{eqnarray*}
  \Top_{\GG(Y)}\circ h' & = & \Top_{\GG(Y)}\circ\lambda_Y\circ\GG(h),\\
                        & = & \GG(\Top_Y) \circ \GG(h), \\
                        & = & \GG(\Top_Y\circ h), \\
                        & = & \GG(f\circ g), \\
                        & = & \GG(f)\circ\GG(g), \\
                        & = & f'\circ g'.
\end{eqnarray*}
Equations~(\ref{eq:botg}) and~(\ref{eq:trivg}) hold by the same arguments
applied to the natural transformations $\Bot$ and $\Triv$ respectively. 
Hence $\GG(f)$ is an immersion, and we are done.
\end{proof}
\begin{lemma}\label{lemma:pushimm}
  Let $f:X\to Y$ be an immersion, and suppose
  the following square is a pushout in $\ncat{n}$: 
  \begin{displaymath}
\xygraph{ 
!{<0cm,0cm>;<1cm,0cm>:<0cm,1cm>::} 
!{(0,0) }*+{\SH(X)}="ul"
!{(0,-1.5) }*+{\SH(Y)}="dl"
!{(2,0) }*+{A}="ur"
!{(2,-1.5) }*+{B}="dr"
!{(1.5,-1.)}*+{}="pusha"
!{(1.7,-1.)}*+{}="pushb"
!{(1.5,-1.2)}*+{}="pushc"
"ul":"dl"_{\SH(f)}
"ul":"ur"^{u}
"ur":"dr"^{g}
"dl":"dr"_{v}
"pusha":@{-}"pushb"
"pusha":@{-}"pushc"
}
\end{displaymath}
Then $\II(g)$ is an immersion.
\end{lemma}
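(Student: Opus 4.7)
The plan is to combine the two preceding lemmas: Lemma~\ref{lemma:gimm} (the monad $\GG=\II\SH$ preserves immersions) and Lemma~\ref{lemma:pushoutimm} (immersions are stable under pushout in $\ocat$). The bridge between them is the fact that $\II$ preserves colimits, since it is a left adjoint — the text explicitly observes that $\II$ has a right adjoint $\TT$, the truncation functor.

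First, I apply $\II$ to the given pushout square in $\ncat{n}$. Because $\II \dashv \TT$, the functor $\II$ preserves pushouts, so we obtain a pushout square in $\ocat$ of the form
\[
\begin{xy}
\xymatrix{\GG(X) \ar[r]^{\II(u)} \ar[d]_{\GG(f)} & \II(A) \ar[d]^{\II(g)} \\
\GG(Y) \ar[r]_{\II(v)} & \II(B)}
\end{xy}
\]
where the left arrow is $\II\SH(f)=\GG(f)$.

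Second, by Lemma~\ref{lemma:gimm}, since $f:X\to Y$ is an immersion, so is $\GG(f):\GG(X)\to\GG(Y)$.

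Finally, $\II(g)$ is, by the displayed square, the pushout of the immersion $\GG(f)$ along $\II(u)$ in $\ocat$. By Lemma~\ref{lemma:pushoutimm}, the class $\Imm$ is closed under pushout, hence $\II(g)\in\Imm$, as required. There is no real obstacle here; the whole point is that all the nontrivial work has already been done in establishing Lemma~\ref{lemma:gimm} (the compatibility of $\cnx$ with $\GG$ through the natural transformation $\lambda:\GG\cnx\to\cnx\GG$) and Lemma~\ref{lemma:pushoutimm}. The only subtlety worth flagging is the direction of adjointness — one must use that $\II$ is a \emph{left} adjoint (to $\TT$), not just a right adjoint to $\SH$, in order to push the colimit through.
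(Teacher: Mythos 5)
Your proof is correct and is essentially identical to the paper's own argument: apply $\II$ (a left adjoint to $\TT$, hence pushout-preserving) to the square, invoke Lemma~\ref{lemma:gimm} to see that $\GG(f)$ is an immersion, and conclude by the closure of $\Imm$ under pushout from Lemma~\ref{lemma:pushoutimm}. No differences worth noting.
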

\begin{proof}
As $\II$ is left adjoint to $\TT$, it preserves pushouts, and
the following square is a pushout in $\ocat$:
\begin{displaymath}
\xygraph{ 
!{<0cm,0cm>;<1cm,0cm>:<0cm,1cm>::} 
!{(0,0) }*+{\GG(X)}="ul"
!{(0,-1.5) }*+{\GG(Y)}="dl"
!{(2,0) }*+{\II(A)}="ur"
!{(2,-1.5) }*+{\II(B)}="dr"
!{(1.5,-1.)}*+{}="pusha"
!{(1.7,-1.)}*+{}="pushb"
!{(1.5,-1.2)}*+{}="pushc"
"ul":"dl"_{\GG(f)}
"ul":"ur"^{\II(u)}
"ur":"dr"^{\II(g)}
"dl":"dr"_{\II(v)}
"pusha":@{-}"pushb"
"pusha":@{-}"pushc"
}
\end{displaymath}
Now $f$ is an immersion, and so is $\GG(f)$ by Lemma~\ref{lemma:gimm}.
As immersions are closed by pushouts (Lemma~\ref{lemma:pushoutimm}),
$\II(g)$ is also an immersion.  
\end{proof}

\sssec

Now let $j$ be a generating trivial cofibration in $\ocat$, and $g$ a
pushout of $\SH(j)$ in $\ncat{n}$. By Corollary~\ref{cor:trivcofimm}, $j$
is an immersion, so that Lemma~\ref{lemma:pushimm} applies, and
$\II(g)$ is an immersion. By Lemma~\ref{lemma:immweq}, immersions are
weak equivalences, so that $\II(g)\in\Weq$. Hence condition~\cfive
holds, and we are done.

\sssec

In case $n=1$, the weak equivalences of $\ncat{n}$ are exactly the
equivalences of categories, whereas if $n=2$, they are 
the {\em biequivalences} in the sense of~\cite{lack:quitwo}. Moreover,
from the generating cofibrations of $\ocat$ we immediately get
a family of generating cofibrations in $\ncat{n}$, namely the
$n$-functors
\begin{displaymath}
  \SH(\ii{k}):\SH(\DO k)\to\SH(\OO k)
\end{displaymath}
for all $k\in\NN$. By abuse of language, let us denote $\SH(X)=X$ whenever
$X$ is an $\omega$-category of the form $\II(Y)$, that is without
non-identity cells in dimensions $>n$. Likewise, denote $\SH(f)=f$
for each $\omega$-functor $f$ of the form $\II(g)$.
With this convention
\begin{itemizeminus}
  \item for each integer $k\leq n$, $\SH(\ii k)=\ii k$;
  \item $\SH(\ii{n{+}1})$ is the collapsing map
        $\ii{n{+}1}':\DO{n{+}1}\to\OO n$;
  \item for each $k>n{+}1$, $\SH(\ii k)$ is the identity on $\OO n$.
\end{itemizeminus}
Now the right-lifting property with respect to identities is clearly void.
Thus we only need a {\em finite} family of $n{+}2$ generating cofibrations
\begin{displaymath}
  \ii 0,\ldots,\ii n,\ii{n{+}1}'.
\end{displaymath}
If $n=1$ or $n=2$, these are precisely the generating cofibrations
of~\cite{joyaltierney:strscs} and~\cite{lack:quitwo} respectively.
Therefore the corresponding model structures are particular cases of ours.

\appendix

\section{The functor $\cnx$} \label{annex:connect}

The aim of this section is to give a complete proof of Theorem~\ref{thm:connect}. In order to do that, we extend $\omega$-functors to cylinders and we introduce the following operations:
\begin{itemizeminus}
\item \emph{left} and \emph{right action} of cells on cylinders, written $u \act V$ and $U \act v$;
\item \emph{concatenation} of cylinders, written $U \comp V$;
\item \emph{multiplication} of cylinders, written $U \COMP V$;
\item \emph{compositions} of cylinders and the \emph{units}, written $U \Comp n V$ and $\Unit m U$.
\end{itemizeminus}
We must prove the following properties: associativity and units for compositions, interchange and iterated units, compatibility of $\,\Cnx f, \Top, \Bot, \Triv$ with compositions and units, functoriality of $\,\cnx$ and naturality of $\Top, \Bot, \Triv$.

\begin{lemma} (functoriality) \label{lemma:functoriality}
Any $\omega$-functor $f : X \to Y$ extends to cylinders in a canonical way:
\begin{enumerateroman}
\item for any $n$-cylinder $U : x \cto x'$ in $X$, we get some $n$-cylinder $f \, U : f \, x \cto f \, x'$ in $Y$;
\item we have $f \, U \para f \, V$ whenever $U \para V$, and $f \, W : f \, U \to f \, V$ for any $W : U \to V$;
\item we have $(g \circ f) \, U = g \, f \, U$ for any $\omega$-functor $g : Y \to Z$, and also $\id \, U = U$.
\end{enumerateroman}
In other words, $\cnx$ defines a functor from $\ocat$ to $\globset$ and the homomorphisms $\Top, \Bot$ are natural.
\end{lemma}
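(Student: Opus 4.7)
My plan is to establish the three parts by a simultaneous induction on the dimension $n$ of cylinders, mirroring the recursive structure of Definition~\ref{def:cylinder}. The crucial auxiliary fact is that any $\omega$-functor $f : X \to Y$ induces, for each pair of $0$-cells $x, y$ in $X$, an $\omega$-functor between shift $\omega$-categories $\HOM{x}{y} \to \HOM{f x}{f y}$ given by $\Sht u \mapsto \Sht{(f u)}$; this is immediate from the explicit formulas for composition and units in the shift construction (Section~\ref{subsec:shift}) combined with the fact that $f$ preserves compositions and units. I will denote this induced functor again by $f$, by abuse of notation.

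For part (i), a $0$-cylinder $U : x \cto y$ is a reversible $1$-cell $\Pal U : x \rto y$, and by Lemma~\ref{lemma:functor} the image $f \Pal U$ is again reversible, so I set $\Pal{(f U)} \deq f \Pal U$. In the inductive step, for an $n$-cylinder $U$ with $n > 0$ I define $\Sce{(f U)} \deq f \Sce U$, $\Tge{(f U)} \deq f \Tge U$, and $\Sht{(f U)} \deq f \Sht U$, the last being produced by the induction hypothesis applied to the induced functor between shift categories. Lemma~\ref{lemma:functor} again ensures reversibility of the first two components. The type constraint $\Sht{(f U)} : \Sht{(f x)} \act \Tge{(f U)} \cto \Sce{(f U)} \act \Sht{(f y)}$ reduces, since $f$ preserves $0$-composition, to the equality $f(\Sht x \act \Tge U) = \Sht{(f x)} \act \Tge{(f U)}$ (and symmetrically on the target side), which holds by construction.

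Part (ii) amounts to unfolding Definition~\ref{def:stcyl} componentwise: in dimension $0$, the source of $f W$ satisfies $\Sce{(f W)} = f \Sce W = f \Pal U = \Pal{(f U)}$, so the source of $f W$ as a $1$-cylinder is exactly $f U$, and symmetrically for the target; in higher dimensions the source and target $1$-cells agree trivially, while the shift component is controlled by the induction hypothesis inside the shift category. Part (iii) is then a direct induction with a trivial base case $\Pal{((g \circ f) U)} = (g \circ f) \Pal U = g(f \Pal U) = \Pal{(g(f U))}$ and an inductive step applying the hypothesis componentwise to source, target, and shift (using the functoriality of the passage $f \mapsto$ (induced functor on $\HOM{x}{y}$)). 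Naturality of $\Top$ and $\Bot$ then follows directly from the construction: $\Top_Y(f U) = f x = f(\Top_X U)$ whenever $U : x \cto y$, and symmetrically for $\Bot$. The only subtlety worth flagging is the verification that the induced map between shift categories is really an $\omega$-functor, which is what makes the induction inside $\HOM{x}{y}$ legitimate; beyond this the proof is pure bookkeeping with no genuine obstacle.
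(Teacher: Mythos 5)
Your proof is correct; the paper in fact states this lemma without proof, and your argument — induction on the cylinder dimension following Definition~\ref{def:cylinder}, with the key observation that $f$ induces an $\omega$-functor $\HOM{x}{y}\to\HOM{f\,x}{f\,y}$ on the shift categories so that the inductive step can be carried out there — is exactly the intended canonical construction. The one subtlety you flag (that $\Sht u\mapsto\Sht{(f\,u)}$ really is an $\omega$-functor, and that $f$ preserving $0$-composition gives the required typing $f(\Sht x\act\Tge U)=\Sht{(f\,x)}\act\Tge{(f\,U)}$) is precisely what makes the induction legitimate, and you verify it correctly.
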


\begin{defn} (left and right action)
Precomposition and postcomposition extend to cylinders. For any 0-cells $x, y, z$, we get:
\begin{itemizeminus}
\item the $n$-cylinder $u \act V$ in $\HOM x z$, defined for any 1-cell $u : x \to y$ and for any $n$-cylinder $V$ in $\HOM y z$;
\item the $n$-cylinder $U \act v$ in $\HOM x z$, defined for any 1-cell $v : y \to z$ and for any $n$-cylinder $U$ in $\HOM x y$.
\end{itemizeminus}
\end{defn}

\begin{lemma} (bimodularity) \label{lemma:bimodularity}
The following identities hold for any 0-cells $x, y, z, t$:
\begin{itemizeminus}
\item $(u \Comp 0 v) \act W = u \act (v \act W)$ for any 1-cells $u : x \to y$ and $v : y \to z$, and for any $n$-cylinder $W$ in $\HOM z t$;
\item $(U \act v) \act w = U \act (v \Comp 0 w)$ for any 1-cells $v : y \to z$ and $w : z \to t$, and for any $n$-cylinder $U$ in $\HOM x y$;
\item $(u \act V) \act w = u \act (V \act w)$ for any 1-cells $u : x \to y$ and $w : z \to t$, and for any $n$-cylinder $V$ in $\HOM y z$.
\end{itemizeminus}
Moreover, we have $\unit x \act U = U = U \act \unit y$ for any 0-cells $x, y$ and for any $n$-cylinder $U$ in $\HOM x y$.
\end{lemma}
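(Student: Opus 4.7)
The plan is to reduce each identity to an equality of $\omega$-functors between shift categories, and then to transport that equality to cylinders via the functoriality of $\cnx$ established in Lemma~\ref{lemma:functoriality}. Recall from Section~\ref{subsec:shift} that, for any 1-cell $u : x \to y$, the precomposition $u \act - : \HOM y z \to \HOM x z$ is an $\omega$-functor, and symmetrically for postcomposition. The action of $u$ on a cylinder $V$ in $\HOM y z$ is, by definition, the image of $V$ under this $\omega$-functor in the sense of Lemma~\ref{lemma:functoriality}(i).

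For the first identity, I claim that the two $\omega$-functors $(u \Comp 0 v) \act -$ and $(u \act -) \circ (v \act -)$ from $\HOM z t$ to $\HOM x t$ coincide. On any cell $\Sht s$ with $s : z \To 0 t$, both send $\Sht s$ to $\Sht{u \Comp 0 v \Comp 0 s}$ by the definition of the actions in Section~\ref{subsec:shift} together with associativity of $\Comp 0$; since $\omega$-functors agreeing on cells are equal, the two coincide. Applying Lemma~\ref{lemma:functoriality}(iii) to $W$ then yields $(u \Comp 0 v) \act W = u \act (v \act W)$. The second and third identities are handled identically: the underlying action $\omega$-functors are checked to agree on cells, and equality on cylinders follows by functoriality.

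For the unit identities, the $\omega$-functor $\unit x \act - : \HOM x y \to \HOM x y$ sends $\Sht v$ to $\Sht{\unit x \Comp 0 v} = \Sht v$, so it coincides with the identity on cells and is therefore the identity $\omega$-functor; Lemma~\ref{lemma:functoriality}(iii) then gives $\unit x \act U = \id \, U = U$, and symmetrically $U \act \unit y = U$. There is no genuine obstacle here: once one recognises that bimodularity for cylinders is the image under $\cnx$ of the corresponding identities for the action $\omega$-functors on shift categories, the lemma collapses to associativity and unit laws for horizontal composition, the actual work having been absorbed into the setup of actions as $\omega$-functors and the extension of $\omega$-functors to cylinders furnished by Lemma~\ref{lemma:functoriality}.
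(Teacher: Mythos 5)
Your proof is correct and follows essentially the same route as the paper, whose entire proof is the single remark ``This is proved by functoriality'': you have simply made explicit the verification that the action $\omega$-functors agree on cells of the shift categories (via associativity and units for $\Comp 0$) before invoking Lemma~\ref{lemma:functoriality}(iii) to transfer the identities to cylinders.
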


This is proved by functoriality.

We omit parentheses in such expressions: For instance, $u \act v \act W$ stands for $u \act (v \act W)$, and $U \act v \act w$ for $(U \act v) \act w$. Moreover, action will always have precedence over other operations: For instance, $u \act V \comp W$ stands for $(u \act V) \comp W$.

\begin{defn} (concatenation)
By induction on $n$, we define the $n$-cylinder $U \comp V : x \cto z$ for any $n$-cylinders $U : x \cto y$ and $V : y \cto z$:
\begin{itemizeminus}
\item if $n = 0$, then $\Pal{(U \comp V)} = \Pal U \Comp 0 \Pal V$;
\item if $n > 0$, then $\Sce{(U \comp V)} = \Sce U \Comp 0 \Sce V$ and $\Tge{(U \comp V)} = \Tge U \Comp 0 \Tge V$, whereas $\Sht{U \comp V} = \Sht U \act \Tge V \comp \Sce U \act \Sht V$.
\end{itemizeminus}
In both cases, we say that $U$ and $V$ are \emph{consecutive}, and we write $U \cons V$.
\end{defn}

\begin{lemma} (source and target of a concatenation)
We have $U \comp U' \para V \comp V'$ for any $n$-cylinders $U \para V$ and $U' \para V'$ such that $U \cons U'$ and $V \cons V'$, and $W \comp W' : U \comp U' \to V \comp V'$ for any $n{+}1$-cylinders $W : U \to V$ and $W' : U' \to V'$ such that $W \cons W'$.
\end{lemma}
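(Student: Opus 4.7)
The plan is to prove both assertions simultaneously by induction on $n$, since the inductive step for each part draws on the other at the previous level.

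For the base case $n = 0$, part~(1) is vacuous, as any two $0$-cells are parallel by convention. For part~(2), given consecutive $1$-cylinders $W, W'$ with $W : U \to V$ and $W' : U' \to V'$, Definition~\ref{def:stcyl} together with the definition of concatenation yields
\[
  \Pal{\mathrm{src}(W \comp W')} \;=\; \Sce{W \comp W'} \;=\; \Sce W \Comp 0 \Sce{W'} \;=\; \Pal U \Comp 0 \Pal{U'} \;=\; \Pal{U \comp U'},
\]
showing $\mathrm{src}(W \comp W') = U \comp U'$; the target case is symmetric.

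For the inductive step, I would handle part~(1) first by inspecting the inductive definition of concatenation: the equalities $\Sce{U \comp U'} = \Sce U \Comp 0 \Sce{U'} = \Sce V \Comp 0 \Sce{V'} = \Sce{V \comp V'}$ (and similarly for $\Tge$) follow directly from the parallelism hypotheses, while the shift $\Sht{U \comp U'} = \Sht U \act \Tge{U'} \comp \Sce U \act \Sht{U'}$ must be parallel to the analogous expression built from $V, V'$ in the appropriate shift $\omega$-category $\HOM{\cdot}{\cdot}$. Since left and right action by a $1$-cell are $\omega$-functors (Section~\ref{subsec:shift}), they preserve parallelism via Lemma~\ref{lemma:functoriality}, and the residual claim is supplied by the induction hypothesis for part~(1) applied to the shifts. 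For part~(2), one computes the source of $W \comp W'$ using Definition~\ref{def:stcyl}: the $\Sce$ and $\Tge$ coordinates agree with those of $U \comp U'$ by direct substitution (using $\Sce U = \Sce W$, $\Sce{U'} = \Sce{W'}$, and their dual versions), and the $\Sht$-coordinate, namely the source of $\Sht W \act \Tge{W'} \comp \Sce W \act \Sht{W'}$, equals $\Sht U \act \Tge{U'} \comp \Sce U \act \Sht{U'} = \Sht{U \comp U'}$ by the induction hypothesis for part~(2) applied to the shifts (again using functoriality of left and right action to commute them past $\mathrm{src}$).

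The only real obstacle is bookkeeping: at each step one must verify that the action-expressions $\Sht U \act \Tge{U'}$ and $\Sce U \act \Sht{U'}$ genuinely form a consecutive pair in the relevant shift $\omega$-category, so that their concatenation is defined. This is where Lemma~\ref{lemma:bimodularity} is used, together with the matching of cells implied by $U \cons U'$ (and the corresponding matching on the $V$-side). There is no conceptual obstruction; everything reduces to unwinding the inductive definitions carefully.
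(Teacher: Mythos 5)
The paper states this lemma without proof (it is one of the structural lemmas of Appendix~\ref{annex:connect} that the authors treat as routine), so there is no official argument to compare against; judged on its own, your induction is correct and is exactly the argument the omitted proof would have to be. Two small remarks. First, part~(1) can be obtained as an immediate corollary of part~(2) one dimension down: since parallelism of $n$-cylinders ($n\geq 1$) means equality of their source and target $n{-}1$-cylinders, and part~(2) gives $\mathrm{src}(U\comp U')=\mathrm{src}(U)\comp\mathrm{src}(U')$ (likewise for targets), the hypotheses $U\para V$, $U'\para V'$ yield $U\comp U'\para V\comp V'$ directly; your separate treatment of part~(1) via preservation of parallelism under the actions is also fine, just slightly longer. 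Second, the consecutiveness check you defer to the end is really about cells rather than cylinders: one needs $(\Sce U\act\Sht y)\act\Tge{U'}=\Sce U\act(\Sht y\act\Tge{U'})$ for the bottom of $\Sht U\act\Tge{U'}$ to meet the top of $\Sce U\act\Sht{U'}$, and this is just associativity of $\Comp 0$ in the ambient $\omega$-category (the cylinder-level Lemma~\ref{lemma:bimodularity} is the analogous statement one level up); moreover this verification properly belongs to the well-formedness of the definition of $\comp$ itself rather than to the present lemma. The key steps you identify --- commuting $\mathrm{src}$ and $\mathrm{tgt}$ past the left and right actions via Lemma~\ref{lemma:functoriality}, using $\Sce U=\Sce W$, $\Tge{U'}=\Tge{W'}$ from Definition~\ref{def:stcyl}, and invoking the induction hypothesis on the shifts --- are all sound.
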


\begin{lemma} (compatibility of $\,\Cnx f$ with concatenation and $\Triv$)
The following identities hold any $\omega$-functor $f : X \to Y$:
\begin{itemizeminus}
\item $f (U \comp V) = f \, U \comp f \, V$ for any $n$-cylinders $U \cons V$ in $X$;
\item $f \, \Triv \, x = \Triv \, f \, x$ for any $n$-cell $x$ in $X$.
\end{itemizeminus}
In particular, the homomorphism $\Triv$ is natural.
\end{lemma}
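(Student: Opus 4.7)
The plan is to prove both identities simultaneously by induction on the dimension $n$ of the cylinders involved. The two inductions are intertwined not only with each other but also with the shift construction: applied in dimension $n{+}1$, each identity will reduce to the same identity in dimension $n$, but stated inside some shifted $\omega$-category $\HOM{x}{y}$. To make this work, I would first record the auxiliary fact that for any $\omega$-functor $f : X \to Y$ and $0$-cells $x,y$ of $X$, the shift construction yields an $\omega$-functor $f : \HOM{x}{y} \to \HOM{fx}{fy}$ satisfying $f(u \act W) = fu \act fW$ and $f(W \act v) = fW \act fv$ for the appropriate $1$-cells and cylinders, which is immediate from Section~\ref{subsec:shift} and Lemma~\ref{lemma:functoriality}.

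For the first identity $f(U \comp V) = f\,U \comp f\,V$, the base case $n=0$ unfolds the definition: $\Pal{(U \comp V)} = \Pal U \Comp 0 \Pal V$, so applying $f$ and using preservation of $0$-composition gives $f\,\Pal{(U\comp V)} = f\,\Pal U \Comp 0 f\,\Pal V = \Pal{(fU \comp fV)}$; reversibility is preserved by Lemma~\ref{lemma:functor}. For the inductive step with $n>0$, I would verify the three defining data of an $n$-cylinder separately. The source and target identities $\Sce{f(U \comp V)} = \Sce U \Comp 0 \Sce V = \Sce{fU} \Comp 0 \Sce{fV}$ (and analogously for $\Tge$) follow again from preservation of $0$-composition. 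For the shift, the definition gives $\Sht{U\comp V} = \Sht U \act \Tge V \comp \Sce U \act \Sht V$; applying the induced $\omega$-functor $f : \HOM{\Sce x}{\Tge z} \to \HOM{f\Sce x}{f\Tge z}$ and invoking the induction hypothesis for concatenation in dimension $n{-}1$, together with the auxiliary compatibility of $f$ with actions, yields $f\Sht{U \comp V} = \Sht{fU} \act \Tge{fV} \comp \Sce{fU} \act \Sht{fV} = \Sht{fU \comp fV}$.

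For the second identity $f\,\Triv\, x = \Triv\, f\,x$, the base case $n=0$ is preservation of units: $\Pal{f\,\Triv\, x} = f\,\unit{x} = \unit{fx} = \Pal{\Triv\,fx}$. The inductive step checks that $\Sce{f\,\Triv\, x} = f\,\unit{\Sce x} = \unit{\Sce{fx}} = \Sce{\Triv\,fx}$ and similarly for the target, again by unit preservation; the shift part reads $\Sht{f\,\Triv\, x} = f\,\Triv\,\Sht x = \Triv\,\Sht{fx} = \Sht{\Triv\, fx}$, where the middle equality is the induction hypothesis for the trivial cylinder in the shifted $\omega$-category $\HOM{\Sce x}{\Tge x}$.

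Naturality of $\Triv$ is then immediate: the equality $f\,\Triv\, x = \Triv\, f\,x$ holding for every cell $x$ is precisely the commutativity of the square $\Triv_Y \circ f = \Cnx f \circ \Triv_X$ at the level of globular sets, which upgrades to a naturality statement in $\ocat$ once we use (from the preceding lemmas in the appendix) that $\Triv_X$ is an $\omega$-functor. I do not expect any serious obstacle here: the argument is essentially a careful bookkeeping of definitions, and the only subtlety worth pointing out is to state the induction hypothesis uniformly over all $\omega$-functors and all $\omega$-categories, so that it may be invoked on the induced $\omega$-functor on $\HOM{-}{-}$ in the inductive step.
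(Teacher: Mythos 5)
Your proof is correct. The paper states this lemma without proof (it is one of the routine verifications left implicit in the appendix), and your argument — a simultaneous induction on $n$ following the inductive definitions of $\comp$ and $\Triv$ through the shift construction, with the induction hypothesis quantified over all $\omega$-functors so it can be applied to the induced functor on $\HOM{-}{-}$ — is exactly the standard verification the authors intend.
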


In the cases of precomposition and postcomposition, we get the following result:

\begin{lemma} (distributivity over concatenation and $\Triv$) \label{lemma:distributivity}
The following identities hold for any 0-cells $x, y, z$ and for any 1-cell $u : x \to y$:
\begin{itemizeminus}
\item $u \act (V \comp W) = u \act V \comp u \act W$ for any $n$-cylinders $V \cons W$ in $\HOM y z$;
\item $u \act \Triv \Sht v = \Triv \Sht{u \Comp 0 v}$ for any $n{+}1$-cell $v : y \To 0 z$.
\end{itemizeminus}
There are similar properties for right action.
\end{lemma}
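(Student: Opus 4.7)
The plan is to reduce both identities to the preceding lemma on compatibility of $\Cnx f$ with concatenation and $\Triv$, specialised to the precomposition $\omega$-functor $u \act (-) : \HOM y z \to \HOM x z$ from Section~\ref{subsec:shift}. The key observation is that, by Lemma~\ref{lemma:functoriality}, every $\omega$-functor extends canonically to cylinders, and the action $u \act V$ on a cylinder $V$ in $\HOM y z$ is \emph{by definition} the image of $V$ under this functorial extension of $u \act (-)$. Once this identification is made, there is nothing left to do beyond applying the previous lemma.

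For the first identity, the compatibility lemma applied to $f = u \act (-)$ and to consecutive $n$-cylinders $V \cons W$ in $\HOM y z$ yields
\[
(u \act -)(V \comp W) \;=\; (u \act -)(V) \comp (u \act -)(W),
\]
which, under the above identification, is exactly $u \act (V \comp W) = u \act V \comp u \act W$. For the second identity, the same lemma gives $(u \act -)(\Triv \Sht v) = \Triv\bigl((u \act -)(\Sht v)\bigr)$, and using the defining formula $u \act \Sht v = \Sht{u \Comp 0 v}$ of Section~\ref{subsec:shift} the right-hand side rewrites as $\Triv \Sht{u \Comp 0 v}$, as required.

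The symmetric statements for the right action are obtained in exactly the same way, applying the compatibility lemma to the postcomposition $\omega$-functor $(-) \act v : \HOM x y \to \HOM x z$, whose action on cells is $\Sht u \act v = \Sht{u \Comp 0 v}$ and whose action on cylinders is again the canonical functorial extension. There is no real obstacle here: the inductive work has already been carried out in the compatibility lemma, and the only content of the present statement is the remark that precomposition and postcomposition, being themselves $\omega$-functors, are instances of that result.
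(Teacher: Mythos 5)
Your proposal is correct and is exactly the paper's argument: the paper states this lemma immediately after the compatibility lemma with the introductory phrase ``In the cases of precomposition and postcomposition, we get the following result,'' i.e.\ it too obtains both identities by specialising that lemma to the $\omega$-functors $u \act (-)$ and $(-) \act v$, whose cylinder action is by definition the canonical functorial extension. Nothing further is needed.
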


\begin{lemma} (associativity and units for concatenation)
The following identities hold for any $n$-cylinders $U \cons V \cons W$ and for any $n$-cylinder $U : x \cto y$:
\[
(U \comp V) \comp W = U \comp (V \comp W), \qquad
\Triv \, x \comp U = U = U \comp \Triv \, y.
\]
\end{lemma}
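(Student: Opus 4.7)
The plan is to proceed by induction on $n$ for both identities simultaneously, relying on the recursive definition of $U \comp V$ via the source, target, and shift components, and then invoking bimodularity (Lemma~\ref{lemma:bimodularity}), distributivity over concatenation and over $\Triv$ (Lemma~\ref{lemma:distributivity}), and the axioms of $\omega$-categories in the ambient $\omega$-category.

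For the base case $n = 0$, the identities reduce to the corresponding laws for $1$-cells: $\Pal{(U \comp V) \comp W} = (\Pal U \Comp 0 \Pal V) \Comp 0 \Pal W = \Pal U \Comp 0 (\Pal V \Comp 0 \Pal W) = \Pal{U \comp (V \comp W)}$ by associativity of $\Comp 0$, and $\Pal{\Triv\, x \comp U} = \unit x \Comp 0 \Pal U = \Pal U$ by the unit law (symmetrically on the right).

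For the inductive step $n > 0$, consider associativity. The source and target $1$-cells of $(U \comp V) \comp W$ and $U \comp (V \comp W)$ agree by associativity of $\Comp 0$ in the ambient category. For the shift, I unfold
\[
\Sht{(U \comp V) \comp W} = \Sht{U \comp V} \act \Tge W \comp \Sce{U \comp V} \act \Sht W = (\Sht U \act \Tge V \comp \Sce U \act \Sht V) \act \Tge W \comp (\Sce U \Comp 0 \Sce V) \act \Sht W.
\]
Applying right-action distributivity over $\comp$ to the first summand and bimodularity to regroup the iterated actions, this becomes
\[
\Sht U \act (\Tge V \Comp 0 \Tge W) \comp \Sce U \act \Sht V \act \Tge W \comp \Sce U \act \Sce V \act \Sht W,
\]
and recognising $\Tge V \Comp 0 \Tge W = \Tge{V \comp W}$, then applying left-action distributivity in reverse, we rewrite the last two summands as $\Sce U \act (\Sht V \act \Tge W \comp \Sce V \act \Sht W) = \Sce U \act \Sht{V \comp W}$. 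By the inductive associativity of $\comp$ in dimension $n-1$ (applied to rebracket the three-fold concatenation), the resulting expression is exactly $\Sht U \act \Tge{V \comp W} \comp \Sce U \act \Sht{V \comp W} = \Sht{U \comp (V \comp W)}$, as required.

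For the unit laws, let $U : x \cto y$. The sources and targets of $\Triv\, x \comp U$ are $\unit{\Sce x} \Comp 0 \Sce U = \Sce U$ and $\unit{\Tge x} \Comp 0 \Tge U = \Tge U$. For the shift,
\[
\Sht{\Triv\, x \comp U} = \Sht{\Triv\, x} \act \Tge U \comp \Sce{\Triv\, x} \act \Sht U = \Triv \Sht x \act \Tge U \comp \unit{\Sce x} \act \Sht U.
\]
The second summand equals $\Sht U$ by the unit part of Lemma~\ref{lemma:bimodularity}, and the first equals $\Triv(\Sht x \act \Tge U) = \Triv \Top \Sht U$ by the right-action analogue of the $\Triv$-distributivity in Lemma~\ref{lemma:distributivity}. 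The inductive left-unit law then gives $\Triv \Top \Sht U \comp \Sht U = \Sht U$, so $\Sht{\Triv\, x \comp U} = \Sht U$ and hence $\Triv\, x \comp U = U$. The right-unit law is symmetric.

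The only real obstacle is bookkeeping: correctly tracking sources, targets, and the interleaving of left and right actions with concatenation in the inductive step of associativity. Once the distributivity lemma is invoked to split actions across $\comp$ and bimodularity is used to collapse iterated actions, the three-fold concatenation rebrackets via the inductive hypothesis and the identity drops out.
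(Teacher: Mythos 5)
Your proof is correct and follows essentially the same route as the paper's: induction on $n$, unfolding $\Sht{(U\comp V)\comp W}$ by the definition of $\comp$, splitting the actions across concatenations via distributivity, collapsing iterated actions via bimodularity, rebracketing the resulting three-fold concatenation by the induction hypothesis, and handling the unit laws via the $\Triv$-distributivity of the (right) action together with the inductive unit law. The only differences are cosmetic — you spell out the $n=0$ case and the source/target components, which the paper leaves as obvious.
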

\begin{proof} We proceed by induction on $n$.

The case $n = 0$ is obvious.

If $n > 0$, the first identity is obtained as follows:
\begin{align*}
\Sht{(U \comp V) \comp W}
&= \Sht{U \comp V} \act \Tge W \comp \Sce{(U \comp V)} \act \Sht W
\tag{definition of $\comp$} \\
&= (\Sht U \act \Tge V \comp \Sce U \act \Sht V) \act \Tge W \comp (\Sce U \Comp 0 \Sce V) \act \Sht W
\tag{definition of $\comp$} \\
&= (\Sht U \act \Tge V \act \Tge W \comp \Sce U \act \Sht V \act \Tge W) \comp \Sce U \act \Sce V \act \Sht W
\tag{distributivity over $\comp$} \\
&= \Sht U \act \Tge V \act \Tge W \comp (\Sce U \act \Sht V \act \Tge W \comp \Sce U \act \Sce V \act \Sht W)
\tag{induction hypothesis} \\
&= \Sht U \act (\Tge V \Comp 0 \Tge W) \comp \Sce U \act (\Sht V \act \Tge W \comp \Sce V \act \Sht W)
\tag{distributivity over $\comp$} \\
&= \Sht U \act \Tge{(V \comp W)} \comp \Sce U \act \Sht{V \comp W}
\tag{definition of $\comp$} \\
&= \Sht{U \comp (V \comp W)}.
\tag{definition of $\comp$}
\end{align*}
The second identity is obtained as follows, using distributivity over $\Triv$ and the induction hypothesis:
\[
\Sht{\Triv \, x \comp U}
= \Sht{\Triv \, x} \act \Tge U \comp \Sce{(\Triv \, x)} \act \Sht U
= \Triv{\Sht x} \act \Tge U \comp \unit{\Sce x} \act \Sht U
= \Triv{\Sht{x \Comp 0 \Tge U}} \comp \Sht U
= \Sht U,
\]
and similarly for the third one.
\end{proof}

From now on, we shall omit parentheses in concatenations.

\begin{lemma} (cylinders in a cartesian product) \label{lemma:cartesian}
There are natural isomorphisms of globular sets $\Cnx{X \times Y} \simeq \Cnx X \times \Cnx Y$ and $\Cnx \SNG \simeq \SNG$, which satisfy the following coherence conditions with the canonical isomorphisms $(X \times Y) \times Z \simeq X \times (Y \times Z)$ and $\SNG \times X \simeq X \simeq X \times \SNG$:
\[
\begin{xy}
\xymatrix@=1em
{\Cnx{(X \times Y) \times Z} \ar[r] \ar[d] & \Cnx{X \times (Y \times Z)} \ar[d] \\
 \Cnx{X \times Y} \times \Cnx Z \ar[d] & \Cnx X \times \Cnx{Y \times Z} \ar[d] \\
 (\Cnx X \times \Cnx Y) \times \Cnx Z \ar[r] & \Cnx X \times (\Cnx Y \times \Cnx Z)}
\end{xy}
\qquad
\begin{xy}
\xymatrix@=1em
{\Cnx{\SNG \times X} \ar[r] \ar[d] & \Cnx X \ar@{=}[dd] & \Cnx{X \times \SNG} \ar[l] \ar[d] \\
 \Cnx \SNG \times \Cnx X \ar[d] & & \Cnx X \times \Cnx \SNG \ar[d] \\
 \SNG \times \Cnx X \ar[r] & \Cnx X & \Cnx X \times \SNG \ar[l]}
\end{xy}
\]
\end{lemma}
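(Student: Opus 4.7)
The plan is to prove both isomorphisms by induction on dimension, using the inductive definition of $n$-cylinders and the fact that every structural operation in a cartesian product of $\omega$-categories (cells, composition, units, hence also reversibility and $\omega$-equivalence) is computed componentwise. The canonical map $\Phi : \Cnx{X \times Y} \to \Cnx X \times \Cnx Y$ is produced at once from the product projections $\pi_1 : X \times Y \to X$ and $\pi_2 : X \times Y \to Y$ via Lemma~\ref{lemma:functoriality}, by setting $\Phi = \langle \Cnx{\pi_1}, \Cnx{\pi_2} \rangle$. By the same lemma this $\Phi$ is natural in both arguments, and similarly the unique morphism $\Cnx \SNG \to \SNG$ is natural.

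The inverse $\Psi : \Cnx X \times \Cnx Y \to \Cnx{X \times Y}$ is built by induction on $n$. In dimension $0$, a pair of 0-cylinders $(U_1, U_2)$ with $\Pal{U_i} : x_i \rto y_i$ is sent to the 0-cylinder with underlying reversible 1-cell $(\Pal{U_1}, \Pal{U_2})$; reversibility in $X \times Y$ is inherited from each factor since weak inverses and the witnesses of $\eqv$ can be paired componentwise. For the inductive step, I would first record the auxiliary observation that for any $0$-cells there is a canonical isomorphism of $\omega$-categories
\[
\HOM{(x_1, x_2)}{(y_1, y_2)} \;\simeq\; \HOM{x_1}{y_1} \times \HOM{x_2}{y_2},
\]
which is immediate from the definition of the shift construction in Section~\ref{subsec:shift} and the componentwise structure of $X \times Y$. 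Then an $n$-cylinder in $X \times Y$ unfolds into a pair of reversible 1-cells $(\Sce U, \Tge U)$ and an $(n{-}1)$-cylinder $\Sht U$ in the above product $\HOM$-category, which by the induction hypothesis is equivalent to a pair of $(n{-}1)$-cylinders in the factor $\HOM$-categories. This gives $\Phi$ and $\Psi$ as mutually inverse bijections in every dimension, compatible with source, target and the trivial-cylinder inclusion, hence the required isomorphism of globular sets.

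The isomorphism $\Cnx \SNG \simeq \SNG$ is proved by the same induction: since $\SNG$ has exactly one $n$-cell in every dimension, one checks inductively that $\Cnx \SNG$ has exactly one $n$-cylinder in every dimension, so the unique globular morphism $\Cnx \SNG \to \SNG$ is forced to be an isomorphism. The three coherence squares then reduce to statements about $\cnx$ applied to the canonical associator and unitors of the cartesian monoidal structure on $\ocat$. Since these isomorphisms are determined by their compositions with projections, and $\Phi$ is itself defined through projections, the squares commute by functoriality $\Cnx{g \circ f} = \Cnx{g} \circ \Cnx{f}$ and the characterising identities $\pi_i \circ \Phi = \Cnx{\pi_i}$.

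The main obstacle is the inductive step for the binary isomorphism, specifically verifying that the decomposition of an $n$-cylinder in $X \times Y$ into source, target and shift agrees, under the $\HOM$-isomorphism, with pairing the decompositions in $X$ and $Y$ separately. Although purely formal, this requires careful bookkeeping of the interaction between the shift construction and cartesian products; once one checks that $\Sce{}$, $\Tge{}$ and $\Sht{}$ are all defined componentwise on pairs, everything else follows routinely.
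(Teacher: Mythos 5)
The paper states this lemma without proof (it is one of several results in Appendix~\ref{annex:connect} left to the reader), so there is no official argument to compare against; your proposal supplies the missing proof and does so correctly, along what is clearly the intended route. The two essential ingredients are exactly the ones you isolate: first, that a $1$-cell of $X\times Y$ is reversible if and only if both components are (one direction by pairing weak inverses and witnesses coinductively, the other by applying the projections and Lemma~\ref{lemma:functor}), which settles dimension $0$; second, the canonical identification $\HOM{(x_1,x_2)}{(y_1,y_2)}\simeq\HOM{x_1}{y_1}\times\HOM{x_2}{y_2}$ coming from the componentwise shift construction, which lets the inductive step for $\Sht U$ go through. Defining $\Phi=\langle\Cnx{\pi_1},\Cnx{\pi_2}\rangle$ via Lemma~\ref{lemma:functoriality} gives naturality for free, and deducing the coherence squares by composing with projections and invoking $\Cnx{(g\circ f)}=\Cnx g\circ\Cnx f$ together with $\pi_i\circ\Phi=\Cnx{\pi_i}$ is the right way to avoid any further computation. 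The one point worth making explicit in a full write-up is the check that your inductively defined inverse $\Psi$ commutes with the source and target operations of Definition~\ref{def:stcyl}, so that $\Phi$ and $\Psi$ really are mutually inverse morphisms of globular sets and not merely dimensionwise bijections; you flag this as bookkeeping, and it is, but it is the only place where something could silently go wrong.
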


\begin{rem}
There is a coherence condition for the symmetry $X \times Y \simeq Y \times X$, but we shall not use it explicitly.
\end{rem}

\begin{rem}
By Lemmas~\ref{lemma:functoriality} and~\ref{lemma:cartesian}, any $\omega$-bifunctor $f : X \times Y \to Z$ extends to cylinders in a canonical way.
\end{rem}

\begin{defn} (multiplication)
Composition extends to cylinders: For any 0-cells $x, y, z$, we get the $n$-cylinder $U \COMP V$ in $\HOM x z$, defined for any $n$-cylinders $U$ in $\HOM x y$ and $V$ in $\HOM y z$.
\end{defn}

\begin{lemma} (associativity of multiplication)
The following identity holds for any 0-cells $x, y, z, t$, and for any $n$-cylinders $U$ in $\HOM x y$, $V$ in $\HOM y z$, $W$ in $\HOM z t$:
\[
(U \COMP V) \COMP W = U \COMP (V \COMP W)
\]
\end{lemma}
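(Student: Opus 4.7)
The plan is to reduce the identity $(U \COMP V) \COMP W = U \COMP (V \COMP W)$ to the strict associativity of $\Comp 0$ on cells, by exploiting the functoriality of $\cnx$ together with the coherence property of Lemma~\ref{lemma:cartesian}.

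First, recall that the multiplication of cylinders is, by the remark following Lemma~\ref{lemma:cartesian}, the canonical extension of the $\omega$-bifunctor $\COMP : \HOM x y \times \HOM y z \to \HOM x z$ to cylinders. Concretely, one transports a pair $(U, V) \in \Cnx{\HOM x y} \times \Cnx{\HOM y z}$ through the inverse of the natural isomorphism of Lemma~\ref{lemma:cartesian} to obtain a single cylinder in $\HOM x y \times \HOM y z$, and then applies $\Cnx{\COMP}$ to it.

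Next, strict associativity of $\Comp 0$ on cells of $\HOM x y$, $\HOM y z$, $\HOM z t$ yields an equality of $\omega$-functors
\[
\COMP \circ (\COMP \times \id) \;=\; \COMP \circ (\id \times \COMP) \circ \alpha
\]
from $(\HOM x y \times \HOM y z) \times \HOM z t$ to $\HOM x t$, where $\alpha$ is the canonical associator for products in $\ocat$. Indeed, on generating cells both sides send $((\Sht a, \Sht b), \Sht c)$ to the shifted cell $\Sht{a \Comp 0 b \Comp 0 c}$.

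Finally, apply the endofunctor $\cnx$ to this equality of $\omega$-functors. The ternary coherence square of Lemma~\ref{lemma:cartesian} identifies $\Cnx \alpha$ with the canonical associator on the triple product $\Cnx{\HOM x y} \times \Cnx{\HOM y z} \times \Cnx{\HOM z t}$ modulo the natural isomorphisms $\Cnx{X \times Y} \simeq \Cnx X \times \Cnx Y$. Evaluating both sides of the resulting equality at the triple $(U, V, W)$ unwinds, on the left, to $(U \COMP V) \COMP W$ and, on the right, to $U \COMP (V \COMP W)$. The main technical point is precisely this last bookkeeping step, i.e.\ checking that the coherence square does indeed conjugate the two extensions into a single common form; once that is done, the equality is automatic from functoriality of $\cnx$ together with strict associativity of $\Comp 0$.
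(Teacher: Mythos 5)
Your proposal is correct and follows exactly the paper's (very terse) proof: the paper also argues ``by functoriality, using coherence with the canonical isomorphism $(X \times Y) \times Z \simeq X \times (Y \times Z)$'', i.e.\ it reduces the claim to the strict associativity of $\Comp 0$ via the extension of the composition bifunctor to cylinders and the coherence square of the product isomorphisms. You have merely spelled out the bookkeeping that the paper leaves implicit.
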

\begin{proof} By functoriality, using coherence with the canonical isomorphism $(X \times Y) \times Z \simeq X \times (Y \times Z)$.
\end{proof}

\begin{rem}
In $\Cnx{X \times Y} \simeq \Cnx X \times \Cnx Y$, concatenation and $\Triv$ can be defined componentwise.
\end{rem}

Using compatibility of $\,\Cnx f$ with concatenation and $\Triv$, we get the following result:

\begin{lemma} (compatibility of multiplication with concatenation and $\Triv$)
The following identities hold for any 0-cells $x, y, z$, for any $n$-cylinders $U \cons U'$ in $\HOM x y$ and $V \cons V'$ in $\HOM y z$, and for any $n{+}1$-cells $u : x \To 0 y$ and $v : y \To 0 z$:
\[
(U \comp U') \COMP (V \comp V') = (U \COMP V) \comp (U' \COMP V'), \qquad
\Triv \Sht u \COMP \Triv \Sht v = \Triv \Sht{u \Comp 0 v}.
\]
\end{lemma}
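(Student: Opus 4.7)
The plan is to reduce both identities to the already-established compatibility of $\Cnx f$ with concatenation and $\Triv$, applied this time to the composition $\omega$-bifunctor $c : \HOM x y \times \HOM y z \to \HOM x z$. By the definition preceding the statement, multiplication of cylinders is just $U \COMP V = \Cnx c\,\sng{U,V}$, where $\sng{U,V}$ denotes the $n$-cylinder in $\HOM x y \times \HOM y z$ corresponding to the pair $(U,V)$ under the natural isomorphism $\Cnx{X\times Y} \simeq \Cnx X \times \Cnx Y$ from Lemma~\ref{lemma:cartesian}.

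For the first identity, I would combine this definition with the remark preceding the statement, namely that concatenation in $\Cnx{X\times Y}$ is computed componentwise in $\Cnx X \times \Cnx Y$: hence $\sng{U,V} \cons \sng{U',V'}$ and $\sng{U,V} \comp \sng{U',V'} = \sng{U\comp U',\, V\comp V'}$. Applying $\Cnx c$ to both sides and then invoking its compatibility with concatenation (established in the earlier lemma) yields
\[
(U\comp U') \COMP (V\comp V')
= \Cnx c\,\sng{U\comp U',\, V\comp V'}
= \Cnx c\,\sng{U,V} \comp \Cnx c\,\sng{U',V'}
= (U\COMP V)\comp(U'\COMP V').
\]

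For the second identity, the same strategy works with $\Triv$ in place of concatenation. Since $\Triv$ is also computed componentwise in the product, we have $\sng{\Triv\Sht u,\, \Triv\Sht v} = \Triv\,\sng{\Sht u,\Sht v}$; applying $\Cnx c$ and using its compatibility with $\Triv$ gives
\[
\Triv\Sht u \COMP \Triv\Sht v
= \Cnx c\,(\Triv\,\sng{\Sht u,\Sht v})
= \Triv\, c(\sng{\Sht u,\Sht v})
= \Triv\Sht{u\Comp 0 v},
\]
where the last equality uses the very definition of the composition bifunctor $c$ at the level of $n$-cells in $\HOM x z$.

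The only subtlety I anticipate is ensuring that the isomorphism of Lemma~\ref{lemma:cartesian} really transports the globular concatenation and $\Triv$ operations to their componentwise counterparts in $\Cnx X \times \Cnx Y$; but this is precisely the content of the remark immediately preceding the statement, so no further verification is required. The argument is therefore a direct bifunctorial reading of multiplication and poses no new obstacle beyond what Lemma~\ref{lemma:cartesian} already secures.
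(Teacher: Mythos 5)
Your proposal is correct and follows exactly the route the paper intends: the paper derives this lemma in one line from the compatibility of $\Cnx f$ with concatenation and $\Triv$ applied to the composition bifunctor, together with the remark that concatenation and $\Triv$ are computed componentwise under the isomorphism $\Cnx{X \times Y} \simeq \Cnx X \times \Cnx Y$. You have simply spelled out the details that the paper leaves implicit.
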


\begin{rem}
Any 0-cell $x$ in $X$ defines an $\omega$-functor $\sng x : \SNG \to X$, from which we get $\cnx \sng x : \SNG \simeq \Cnx \SNG \to \Cnx X$. It is easy to see that this homomorphism of globular sets corresponds to the sequence of trivial $n$-cylinders $\Triv \, \Unit n x$.
\end{rem}

\begin{lemma} (representability)
The following identities hold for any 0-cells $x, y, z$:
\begin{itemizeminus}
\item $u \act V = \Triv \, \Unit n {\Sht u} \COMP V = \Triv \Sht{\Unit {n{+}1} u} \COMP V$ for any 1-cell $u : x \to y$ and for any $n$-cylinder $V$ in $\HOM y z$;
\item $U \act v = U \COMP \Triv \, \Unit n {\Sht v} = U \COMP \Triv \Sht{\Unit {n{+}1} v}$ for any 1-cell $v : y \to z$ and for any $n$-cylinder $U$ in $\HOM x y$.
\end{itemizeminus}
In other words, the (left and right) action of a 1-cell $u$ is represented by the $n$-cylinder $\Triv \Sht{\Unit {n{+}1} u}$.
\end{lemma}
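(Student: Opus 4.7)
The first equality in each item is an immediate consequence of the shift axiom $\Unit m{\Sht u} = \Sht{\Unit{m+1} u}$: specializing to $m = n$ and applying $\Triv$ gives $\Triv \, \Unit n {\Sht u} = \Triv \Sht{\Unit{n+1} u}$, and similarly for the right-action version. The substantive identity to prove is thus $u \act V = \Triv \, \Unit n {\Sht u} \COMP V$, the right-action case being symmetric.

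The strategy is to establish an equality of $\omega$-functors $\HOM y z \to \HOM x z$ and then transport it to cylinders via functoriality of $\cnx$. I claim that
\begin{equation*}
u \act -\;=\;\Sht u \COMP -,
\end{equation*}
where the right-hand side denotes the partial application $\COMP \circ (\sng{\Sht u} \times \id)$ of the bifunctor from Section~\ref{subsec:shift}, followed by the canonical isomorphism $\SNG \times \HOM y z \simeq \HOM y z$. Both sides are $\omega$-functors, so equality is a cellwise check. Any $k$-cell in $\HOM y z$ has the form $\Sht v$ with $v : y \To 0 z$ a $(k{+}1)$-cell of $X$, and $u \act \Sht v = \Sht{u \Comp 0 v}$ by definition. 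On the other side, $\sng{\Sht u}$ sends the unique cell of $\SNG$ at dimension $k$ to $\Unit k{\Sht u}$, so the partial application evaluates to $\Unit k{\Sht u} \COMP \Sht v$; by the shift identity $\Unit k{\Sht u} = \Sht{\Unit{k+1} u}$ together with the defining formula of the bifunctor, this equals $\Sht{\Unit{k+1} u \Comp 0 v}$. Since $u$ is a $1$-cell and $v$ is a $(k{+}1)$-cell, the $\omega$-category convention $\Unit{k+1} u \Comp 0 v = u \Comp 0 v$ applies and the two sides agree.

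With the equality of $\omega$-functors in hand, apply $\cnx$ (Lemma~\ref{lemma:functoriality}) and evaluate at the $n$-cylinder $V$. By the definition of how precomposition extends to cylinders, $\cnx(u \act -)(V) = u \act V$. For $\cnx(\Sht u \COMP -)(V)$, functoriality of $\cnx$ combined with the natural isomorphism of Lemma~\ref{lemma:cartesian} turns the partial application into the evaluation of the extended bifunctor $\cnx\COMP$ at the pair $(\cnx\sng{\Sht u}, V)$. The remark preceding the statement identifies $\cnx\sng{\Sht u}$ with the sequence of trivial $n$-cylinders $\Triv \, \Unit n {\Sht u}$, so the right-hand side becomes $\Triv \, \Unit n {\Sht u} \COMP V$, as required.

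The one point requiring care is the cellwise identification $\Sht u \COMP - = u \act -$ across all dimensions: the bifunctor formula $\Sht a \COMP \Sht b = \Sht{a \Comp 0 b}$ must be invoked for $\Sht a$ and $\Sht b$ of potentially different dimensions, relying on the shift identities for units and $\omega$-bifunctoriality. Once this dimension bookkeeping is settled, the passage through $\cnx$ is purely formal, and the right-action identity is obtained by the symmetric argument using the postcomposition shift functor $\HOM x y \to \HOM x z$.
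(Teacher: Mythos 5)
Your proposal is correct and follows essentially the same route as the paper, whose entire proof reads ``By functoriality, using coherence with the canonical isomorphisms $\SNG \times X \simeq X \simeq X \times \SNG$'': you have simply unfolded that one-liner, making explicit the cellwise identification of the precomposition functor with the partial application of $\COMP$ at $\sng{\Sht u}$, and the passage through $\cnx$ via Lemmas~\ref{lemma:functoriality} and~\ref{lemma:cartesian} together with the remark identifying $\cnx\sng{x}$ with the trivial cylinders $\Triv\,\Unit n x$. The dimension bookkeeping you flag is handled exactly as you describe, via the unit rule $\Unit m{\Sht u}=\Sht{\Unit{m{+}1}u}$ of the shift construction and the unit-padding convention for $\Comp 0$.
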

\begin{proof} By functoriality, using coherence with the canonical isomorphisms $\SNG \times X \simeq X \simeq X \times \SNG$. 
\end{proof}

\begin{defn} (extended action)
For any 0-cells $x, y, z$, we extend left and right action to higher dimensional cells as follows:
\begin{itemizeminus}
\item $u \act V = \Triv \Sht u \COMP V$ for any $n{+}1$-cell $u : x \to y$ and for any $n$-cylinder $V$ in $\HOM y z$;
\item $U \act v = U \COMP \Triv \Sht v$ for any $n{+}1$-cell $v : y \to z$ and for any $n$-cylinder $U$ in $\HOM x y$.
\end{itemizeminus}
\end{defn}

\begin{rem}
In particular, we get $u \act V = \Unit {n{+}1} u \act V$ for any 1-cell $u : x \to y$ and for any $n$-cylinder $V$ in $\HOM y z$, and
similarly for the right action. This means that we have indeed extended the action of 1-cells.
\end{rem}

\begin{lemma} (extended bimodularity)
The first three identities of lemma~\ref{lemma:bimodularity} extend to higher dimensional cells.
\end{lemma}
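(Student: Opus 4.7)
The plan is to reduce each of the three extended identities to the associativity of multiplication (already proved) combined with the compatibility of multiplication with concatenation and $\Triv$. The crucial observation is that, by the representability lemma, the definition of extended action is literally given in terms of $\COMP$ with a trivial cylinder: namely, $u \act V = \Triv \Sht u \COMP V$ and $U \act v = U \COMP \Triv \Sht v$ for arbitrary $n{+}1$-cells $u$, $v$. Hence each associativity-style identity for $\act$ is a shadow of associativity for $\COMP$.

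Concretely, first I would dispatch the first identity. For $n{+}1$-cells $u : x \to y$ and $v : y \to z$ (composable along $0$, so $u \Comp 0 v$ makes sense after appropriate unit-padding) and an $n$-cylinder $W$ in $\HOM z t$, one computes
\[
(u \Comp 0 v) \act W \;=\; \Triv \Sht{u \Comp 0 v} \COMP W \;=\; (\Triv \Sht u \COMP \Triv \Sht v) \COMP W \;=\; \Triv \Sht u \COMP (\Triv \Sht v \COMP W) \;=\; u \act (v \act W),
\]
using the compatibility of $\COMP$ with $\Triv$ in the second step and the associativity of $\COMP$ in the third. The second identity is symmetric:
\[
(U \act v) \act w \;=\; (U \COMP \Triv \Sht v) \COMP \Triv \Sht w \;=\; U \COMP (\Triv \Sht v \COMP \Triv \Sht w) \;=\; U \COMP \Triv \Sht{v \Comp 0 w} \;=\; U \act (v \Comp 0 w).
\]
The third identity is the mixed case and follows the same pattern purely by associativity of $\COMP$, without even invoking the $\Triv$-compatibility:
\[
(u \act V) \act w \;=\; (\Triv \Sht u \COMP V) \COMP \Triv \Sht w \;=\; \Triv \Sht u \COMP (V \COMP \Triv \Sht w) \;=\; u \act (V \act w).
\]

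There is essentially no obstacle here beyond bookkeeping: one must check that the unit-padding conventions built into $u \Comp 0 v$ for cells of mixed dimension are consistent with the way $\Sht{-}$ and $\Triv$ interact with those padded composites, but this is immediate from the definitions $\Unit m {\Sht u} = \Sht{\Unit{m{+}1} u}$ and $\Unit m {\Triv \, x} = \Triv \, \Unit m x$, together with the fact that the original bimodularity lemma already covers the $1$-cell case. Thus extended bimodularity is a direct corollary of the representability lemma and the associativity and $\Triv$-compatibility of multiplication.
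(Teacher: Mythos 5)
Your proof is correct and follows exactly the paper's route: the paper's own justification is the one-liner ``by associativity of multiplication and compatibility of multiplication with $\Triv$,'' and your computation is precisely the expansion of that, unwinding the definition of extended action as $\Triv \Sht u \COMP V$ (resp.\ $U \COMP \Triv \Sht v$) and applying $\Triv \Sht u \COMP \Triv \Sht v = \Triv \Sht{u \Comp 0 v}$ together with associativity of $\COMP$. Nothing is missing.
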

\begin{proof} By associativity of multiplication and compatibility of multiplication with $\Triv$.
\end{proof}

\begin{lemma} (extended distributivity)
The identities of lemma~\ref{lemma:distributivity} extend to higher dimensional cells.
\end{lemma}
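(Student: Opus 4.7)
The plan is to derive both extended identities directly from the defining formulas $u \act V = \Triv \Sht u \COMP V$ and $U \act v = U \COMP \Triv \Sht v$ of the extended action, using only the compatibility of multiplication $\COMP$ with concatenation $\comp$ and with $\Triv$ established in the preceding lemma.

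For the first identity $u \act (V \comp W) = u \act V \comp u \act W$, the key step will be the observation that $\Triv \Sht u$ is consecutive with itself, its source and target both being $\Sht u$, so the unit law for $\comp$ yields $\Triv \Sht u = \Triv \Sht u \comp \Triv \Sht u$. I will substitute this on the left-hand side of
\[
u \act (V \comp W) \;=\; \Triv \Sht u \COMP (V \comp W)
\]
and apply the compatibility identity $(U \comp U') \COMP (V \comp V') = (U \COMP V) \comp (U' \COMP V')$ with $U = U' = \Triv \Sht u$ to obtain
\[
(\Triv \Sht u \COMP V) \comp (\Triv \Sht u \COMP W) \;=\; u \act V \comp u \act W,
\]
which is the desired equality.

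For the second identity, the compatibility of $\COMP$ with $\Triv$ applies directly: $u \act \Triv \Sht v = \Triv \Sht u \COMP \Triv \Sht v = \Triv \Sht{u \Comp 0 v}$. The symmetric identities for the right action are handled by the same method, this time decomposing $\Triv \Sht v = \Triv \Sht v \comp \Triv \Sht v$ before invoking bifunctoriality of $\COMP$ with respect to $\comp$. I do not expect any genuine obstacle here: the whole argument reduces to unfolding definitions and invoking the preceding compatibility lemma, and the only slightly non-obvious move is to split the trivial cylinder as a self-concatenation so that the bifunctoriality of $\COMP$ with respect to $\comp$ becomes available.
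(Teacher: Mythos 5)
Your proposal is correct and matches the paper's proof essentially verbatim: the paper likewise rewrites $\Triv \Sht u$ as $\Triv \Sht u \comp \Triv \Sht u$ and then applies compatibility of multiplication with concatenation, and derives the second identity from compatibility of multiplication with $\Triv$. No differences worth noting.
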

\begin{proof} The first identity is obtained as follows, using compatibility of multiplication with concatenation:
\[
u \act (V \comp W)
= \Triv \Sht u \COMP (V \comp W)
= (\Triv \Sht u \comp \Triv \Sht u) \COMP (V \comp W)
= (\Triv \Sht u \COMP V) \comp (\Triv \Sht u \COMP W)
= u \act V \comp u \act W.
\]
The second one follows from compatibility of multiplication with $\Triv$.
\end{proof}

\begin{lemma} (commutation)
The following identities hold for any 0-cells $x, y, z$, for any $n{+}1$-cells $u, u' : x \To 0 y$ and $v, v' : y \To 0 z$, and for any $n$-cylinders $U : \Sht u \cto \Sht{u'}$ in $\HOM x y$ and $V : \Sht v \cto \Sht{v'}$ in $\HOM y z$:
\[
U \act v \comp u' \act V = U \COMP V = u \act V \comp U \act v'.
\]
\end{lemma}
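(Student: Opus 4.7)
The plan is to reduce each of the two identities to a single application of the compatibility of multiplication with concatenation, followed by the unit laws for concatenation. Both of these are lemmas already established earlier in the appendix, so no new construction is needed.

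Concretely, unfolding the extended actions via $U \act v = U \COMP \Triv \Sht v$ and $u' \act V = \Triv \Sht{u'} \COMP V$ rewrites the left-hand side of the first equality as
$$U \act v \comp u' \act V \;=\; (U \COMP \Triv \Sht v) \comp (\Triv \Sht{u'} \COMP V).$$
I would then apply the compatibility identity $(A \comp B) \COMP (C \comp D) = (A \COMP C) \comp (B \COMP D)$ \emph{in reverse}, with $A = U$, $B = \Triv \Sht{u'}$ viewed as consecutive cylinders in $\HOM x y$, and $C = \Triv \Sht v$, $D = V$ as consecutive cylinders in $\HOM y z$. The boundary conditions are satisfied because $\Bot U = \Sht{u'} = \Top \Triv \Sht{u'}$ and $\Bot \Triv \Sht v = \Sht v = \Top V$. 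This transforms the expression into $(U \comp \Triv \Sht{u'}) \COMP (\Triv \Sht v \comp V)$, and the unit laws for concatenation collapse the two inner factors to $U$ and $V$ respectively, yielding $U \COMP V$.

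The second equality $U \COMP V = u \act V \comp U \act v'$ is treated symmetrically. Unfolding the actions as $u \act V = \Triv \Sht u \COMP V$ and $U \act v' = U \COMP \Triv \Sht{v'}$, regrouping by the same compatibility law in reverse produces $(\Triv \Sht u \comp U) \COMP (V \comp \Triv \Sht{v'})$, which then simplifies via the unit laws $\Triv \Sht u \comp U = U$ and $V \comp \Triv \Sht{v'} = V$ to give $U \COMP V$.

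I do not foresee any real obstacle: the only delicate step is the bookkeeping of sources and targets, verifying that each concatenation is defined and that the cylinders fed into the compatibility law are consecutive in the appropriate hom-$\omega$-category. Once this is checked, the proof is a purely formal two-step rewriting built entirely from the preceding lemmas.
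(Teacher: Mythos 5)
Your proposal is correct and follows exactly the paper's own argument: unfold the extended actions as multiplications by trivial cylinders, apply compatibility of multiplication with concatenation in reverse to regroup, and conclude with the unit laws for concatenation. The paper treats the second identity ``similarly,'' just as you do.
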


\begin{proof} The first identity is obtained as follows, using compatibility of multiplication with concatenation:
\[
U \act v \comp u' \act V
= (U \COMP \Triv \Sht v) \comp (\Triv \Sht{u'} \COMP V)
= (U \comp \Triv \Sht{u'}) \COMP (\Triv \Sht v \comp V)
= U \COMP V,
\]
and similarly for the second one.
\end{proof}

From now on, we shall always assume that $m > n$.

\begin{defn} (compositions)
By induction on $n$, we define the $m$-cylinder $\Cto {U \Comp n V : R \To n T} {x \Comp n y} {x' \Comp n y'}$ for any $m$-cylinders $\Cto {U : R \To n S} x {x'} $ and $\Cto {V : S \To n T} y {y'}$:
\begin{itemizeminus}
\item $\Sce{(U \Comp 0 V)} = \Sce U = \Pal R$ and $\Tge{(U \Comp 0 V)} = \Tge V = \Pal T$, whereas $\Sht{U \Comp 0 V} = x \act \Sht V \comp \Sht U \act y'$;
\item if $n > 0$, then $\Sce{(U \Comp n V)} = \Sce U = \Sce V$ and $\Tge{(U \Comp n V)} = \Tge U = \Tge V$, whereas $\Sht{U \Comp n V} = \Sht U \Comp{n-1} \Sht V$.
\end{itemizeminus}
In both cases, we say that $U$ and $V$ are $n$-\emph{composable}, and we write $U \Cons n V$.
\end{defn}

\begin{lemma} (source and target of a composition)
We have $U \Comp n U' \para V \Comp n V'$ for any $m$-cylinders $U \para V$ and $U' \para V'$ such that $U \Cons n U'$ (so that $V \Cons n V'$), and $W \Comp n W' : U \Comp n U' \to V \Comp n V'$ for any $m{+}1$-cylinders $W : U \to V$ and $W' : U' \to V'$.
\end{lemma}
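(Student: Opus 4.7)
My plan is to prove both assertions by a single induction on $n$, keeping the ambient dimension $m>n$ as a free parameter and allowing the cylinders to live in an arbitrary $\omega$-category, so that the induction hypothesis may be applied inside hom $\omega$-categories in the step from $n$ to $n{-}1$.

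For the parallelism assertion, the side condition $V \Cons n V'$ is automatic: $U \Cons n U'$ is a conjunction of equalities between the source/target data of $U$ and $U'$ at levels $\leq n$, and parallelism $U \para V$, $U' \para V'$ transports these equalities to $V$ and $V'$. To obtain $U \Comp n U' \para V \Comp n V'$ one must match the source and target $(m{-}1)$-cylinders of both sides against Definition~\ref{def:stcyl}. For $n \geq 1$ the composition formula gives $\Sce{(U \Comp n U')} = \Sce U$ and $\Tge{(U \Comp n U')} = \Tge{U'}$ (equal to the corresponding boundary data of $V \Comp n V'$ by parallelism), while $\Sht{U \Comp n U'} = \Sht U \Comp{n-1} \Sht{U'}$; parallelism of the shifts then follows by applying the induction hypothesis inside the relevant hom $\omega$-category. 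For $n = 0$, $\Sht{U \Comp 0 U'} = \Top\, U \act \Sht{U'} \comp \Sht U \act \Bot\, U'$, and parallelism of $U,V$ and of $U',V'$ gives $\Top\, U = \Top\, V$, $\Bot\, U' = \Bot\, V'$ together with parallelism of the inner shifts; the source/target rule for concatenation (already established) then closes the case.

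For the second assertion, let $W : U \to V$ and $W' : U' \to V'$ be $(m{+}1)$-cylinders with $W \Cons n W'$. I would compute the source and target $m$-cylinders of $W \Comp n W'$ from Definition~\ref{def:stcyl} and verify they equal $U \Comp n U'$ and $V \Comp n V'$. For $n > 0$ the $1$-cell boundaries match at once and $\Sht{W \Comp n W'} = \Sht W \Comp{n-1} \Sht{W'}$ reduces the statement to the induction hypothesis inside the hom $\omega$-category, delivering source $\Sht U \Comp{n-1} \Sht{U'} = \Sht{U \Comp n U'}$ and target $\Sht V \Comp{n-1} \Sht{V'} = \Sht{V \Comp n V'}$ by the first part just proved. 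For $n = 0$ the shift formula $\Sht{W \Comp 0 W'} = \Top\, W \act \Sht{W'} \comp \Sht W \act \Bot\, W'$ combined with the source/target rule for concatenation and with the functoriality of $\cnx$ on the precomposition and postcomposition $\omega$-functors pins down the correct boundary cylinders.

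The only substantive bookkeeping lies in the base case $n = 0$ of both halves, where one must carefully unfold the asymmetric formula for the shift of a $0$-composition and check that the actions, concatenation and source/target of cylinders interact exactly as prescribed by Definition~\ref{def:stcyl}. Every other case reduces cleanly to the induction hypothesis applied inside an appropriate hom $\omega$-category, once one observes that all operations involved commute with the shift decomposition in the precise sense needed.
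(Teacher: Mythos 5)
Your overall plan---a single induction on $n$ with the ambient dimension $m$ kept free, so that the case $n>0$ reduces to the induction hypothesis applied to the shifts inside a hom $\omega$-category---is the natural one; the paper itself states this lemma without proof, treating it as routine globular bookkeeping. Your case $n>0$ and your derivation of $V\Cons n V'$ from $U\Cons n U'$ and parallelism are fine.

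The base case $n=0$, however, which you rightly single out as the only substantive point, rests on a false assertion: parallelism $U\para V$ of $m$-cylinders does \emph{not} give $\Top\,U=\Top\,V$, only $\Top\,U\para\Top\,V$. Parallel cylinders share their source and target $(m{-}1)$-cylinders, hence the boundaries of their top and bottom cells, but not those cells themselves; this is exactly the situation of Lemma~\ref{lemma:transport}, where $U:x\cto x'$ and $V:y\cto y'$ are parallel and yet joined by a nontrivial cell $z:x\to y$. Consequently $\Sht{U\Comp 0 U'}=\Top\,U\act\Sht{U'}\comp\Sht{U}\act\Bot\,U'$ and $\Sht{V\Comp 0 V'}=\Top\,V\act\Sht{V'}\comp\Sht{V}\act\Bot\,V'$ are not concatenations of the same actions, and the source/target rule for concatenation alone does not close the case. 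The missing ingredient is the compatibility of the \emph{extended} action with the globular structure: since $u\act W=\Triv\Sht{u}\COMP W$, the globularity of $\Triv$ and of $\COMP$ (the latter coming from Lemmas~\ref{lemma:functoriality} and~\ref{lemma:cartesian}) shows that the source of $u\act W$ is the action of the source cell $\sce{m{-}1}(u)$ on the source cylinder of $W$, and dually for targets; one then concludes because $\sce{m{-}1}(\Top\,U)$ equals the top of the common source cylinder of $U$ and $V$, hence equals $\sce{m{-}1}(\Top\,V)$, even though $\Top\,U\neq\Top\,V$ in general. The same issue affects your $n=0$ case of the second assertion: there $\Top\,W$ is an $(m{+}1)$-cell, so its action is the extended one, and ``functoriality of $\cnx$ on the precomposition and postcomposition $\omega$-functors'' does not literally apply (those $\omega$-functors exist only for $1$-cells); that step, too, must be routed through $\Triv$ and $\COMP$.
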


\begin{defn} (units)
By induction on $n$, we define the $m$-cylinder $\Cto {\Unit m U : U \To n U} {\Unit m x} {\Unit m y}$ for any $n$-cylinder $U : x \cto y$:
\begin{itemizeminus}
\item if $n = 0$, then $\Sce{(\Unit m U)} = \Tge{(\Unit m U)} = \Pal U$, whereas $\Sht{\Unit m U} = \Triv \Sht{\Unit m {\Pal U}}$. In particular, we get $\Sht{\Unit 1 U} = \Triv \Sht{\Pal U}$;
\item if $n > 0$, then $\Sce{(\Unit m U)} = \Sce U$ and $\Tge{(\Unit m U)} = \Tge U$, whereas $\Sht{\Unit m U} = \Unit {m{-}1} {\Sht U}$.
\end{itemizeminus}
\end{defn}

\begin{lemma} (source and target of a unit)
We have $\Unit {m{+}1} U : \Unit m U \to \Unit m U$ for any $n$-cylinder $U$.
\end{lemma}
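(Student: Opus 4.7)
I would proceed by induction on the dimension $n$ of the cylinder $U$. Since an $m$-cylinder is determined by its $\Sce$, $\Tge$, and $\Sht$ components (Definition~\ref{def:cylinder}), and since Definition~\ref{def:stcyl} extracts the source and target $m$-cylinders of $\Unit {m+1} U$ from those same three components of $\Unit {m+1} U$ itself, at each step it suffices to match the three components of the source and target against those of $\Unit m U$.

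For the base case $n = 0$, I would write $U : x \cto y$ with $\Pal U : x \rto y$. The $n=0$ clause of the definition of units gives $\Sce(\Unit {m+1} U) = \Tge(\Unit {m+1} U) = \Pal U$, which matches $\Sce(\Unit m U) = \Tge(\Unit m U)$. If $m = 0$, the 1-cylinder $\Unit 1 U$ has source and target 0-cylinders with palette $\Pal U$, so both equal $U = \Unit 0 U$. If $m > 0$, I would use the identity $\Sht{\Unit {m+1} u} = \Unit m{\Sht u}$ from Section~\ref{subsec:shift} to rewrite
\[
\Sht{\Unit {m+1} U} = \Triv\Sht{\Unit {m+1}{\Pal U}} = \Triv\bigl(\Unit m{\Sht{\Pal U}}\bigr),
\]
a trivial cylinder on a unit cell of $\HOM x y$. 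The source and target of this underlying unit in dimension $m{-}1$ are both equal to $\Unit {m-1}{\Sht{\Pal U}}$, so by the earlier lemma on source and target of $\Triv$, the source and target of $\Sht{\Unit {m+1} U}$ as an $m$-cylinder are both $\Triv(\Unit {m-1}{\Sht{\Pal U}}) = \Triv\Sht{\Unit m{\Pal U}} = \Sht{\Unit m U}$, as required.

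For the inductive step $n > 0$, the corresponding clause of the definition of units gives $\Sce(\Unit {m+1} U) = \Sce U = \Sce(\Unit m U)$, $\Tge(\Unit {m+1} U) = \Tge U = \Tge(\Unit m U)$, and $\Sht{\Unit {m+1} U} = \Unit m{\Sht U}$, where $\Sht U$ is an $(n{-}1)$-cylinder. Applying the induction hypothesis to $\Sht U$ yields that the source and target $m$-cylinders of $\Unit m{\Sht U}$ are both $\Unit {m-1}{\Sht U} = \Sht{\Unit m U}$, and the claim follows. The argument is essentially a mechanical unwinding of the nested definitions; the only mildly delicate interaction is the compatibility between the shift construction and iterated units in the base case, which is already encoded in the identity $\Unit m{\Sht u} = \Sht{\Unit {m+1} u}$.
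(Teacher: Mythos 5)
Your proof is correct: the paper states this lemma without proof, treating it as a routine verification, and your induction on $n$ (matching the $\Sce$, $\Tge$ and $\Sht$ components against Definition~\ref{def:stcyl}, using the shift identity $\Sht{\Unit{m{+}1}u}=\Unit m{\Sht u}$ and the lemma $\Triv\,z:\Triv\,x\to\Triv\,y$ in the base case) is exactly the argument the authors leave implicit. The only blemish is a slip of wording in the inductive step, where the source and target of the $m$-cylinder $\Unit m{\Sht U}$ are $(m{-}1)$-cylinders, not $m$-cylinders; the identification of both with $\Unit{m{-}1}{\Sht U}=\Sht{\Unit m U}$ is nonetheless what you use, and it is correct.
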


\begin{rem}
By construction, $\Top$ and $\Bot$ are compatible with compositions and units.
\end{rem}

\begin{lemma} (associativity and units for compositions)
The following identities hold for any $m$-cylinders $U \Cons n V \Cons n W$ and for any $m$-cylinder $U : S \To n T$:
\[
(U \Comp n V) \Comp n W = U \Comp n (V \Comp n W), \qquad
\Unit m S \Comp n U = U = U \Comp n \Unit m T.
\]
\end{lemma}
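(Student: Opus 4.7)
I would prove these identities by induction on $n$, establishing associativity and both unit laws simultaneously for all $m > n$. For the inductive step $n > 0$, the defining clauses $\Sht{U \Comp n V} = \Sht U \Comp{n-1} \Sht V$ and $\Sht{\Unit m U} = \Unit{m-1}\Sht U$, combined with the observation that the outer reversible $1$-cells $\Sce$ and $\Tge$ are unaltered by $\Comp n$ or by insertion of $\Unit m$, reduce each identity at level $n$ to the corresponding identity at level $n-1$ applied to $(m-1)$-cylinders in the shift $\omega$-category $\HOM{\cdot}{\cdot}$, where the induction hypothesis applies. The whole content of the proof is therefore concentrated in the base case $n = 0$.

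For the base case, I would compute the shifts explicitly from $\Sht{U \Comp 0 V} = x \act \Sht V \comp \Sht U \act y'$. For associativity, with $m$-cylinders $U : x \cto x'$, $V : y \cto y'$, $W : z \cto z'$ satisfying $U \Cons 0 V \Cons 0 W$, expanding both $\Sht{(U \Comp 0 V) \Comp 0 W}$ and $\Sht{U \Comp 0 (V \Comp 0 W)}$, distributing the outer actions over the inner concatenations (by extended distributivity), collecting the nested actions via extended bimodularity, and reassociating the resulting three-fold concatenation, both sides reduce to the common normal form $x \act y \act \Sht W \comp x \act \Sht V \act z' \comp \Sht U \act (y' \Comp 0 z')$. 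The equalities of $\Sce$ and $\Tge$ follow directly from the $\Comp 0$ clause.

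For the unit law $\Unit m S \Comp 0 U = U$ with $U : y \cto y'$ of type $S \To 0 T$ and $S : s_0 \cto s_0'$, the sources and targets match trivially, and the shift expands to $\Unit m{s_0} \act \Sht U \comp \Triv \Sht{\Unit m{\Pal S}} \act y'$. The second summand, by compatibility of multiplication with $\Triv$, equals $\Triv(\Sce U \act \Sht{y'})$, which is the trivial cylinder at the bottom cell of $\Sht U$ and so vanishes under concatenation on the right by the unit law for $\comp$. For the first summand, extended action rewrites $\Unit m{s_0} \act \Sht U$ as $\Triv \Sht{\Unit m{s_0}} \COMP \Sht U$, and I would need to show this equals $\Sht U$. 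The right unit law will follow by a symmetric argument.

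I expect this last reduction to be the main obstacle. The $1$-cell case $\unit{s_0} \act \Sht U = \Sht U$ is exactly the unit clause of bimodularity. For general $m$, the plan is to combine the shift-unit identity $\Sht{\Unit m z} = \Unit{m-1}\Sht z$ with the coherence of $\Cnx$ with cartesian products (Lemma~\ref{lemma:cartesian}): $\Triv \Sht{\Unit m{s_0}}$ arises as the image under $\Cnx{\sng{s_0}} : \Cnx \SNG \to \Cnx X$ of the unique cylinder of $\Cnx \SNG \simeq \SNG$, and through the coherent isomorphism $\Cnx{\SNG \times X} \simeq \Cnx X$ it should act as the neutral element for $\COMP$ on cylinders of $\HOM{s_0}{t_0'}$, yielding $\Triv \Sht{\Unit m{s_0}} \COMP \Sht U = \Sht U$.
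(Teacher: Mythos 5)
Your proof is correct and follows essentially the same route as the paper's: induction on $n$ with all the content in the base case, the same normal form for associativity (yours, $\Sht U \act (y' \Comp 0 z')$, equals the paper's $\Sht U \act y' \act z'$ by bimodularity), and the same two-summand analysis of the unit law with the second summand absorbed as a trivial cylinder on the bottom of $\Sht U$. The only deviation is that the step you flag as the main obstacle, $\Unit m {s_0} \act \Sht U = \Sht U$, is already immediate from the remark following the definition of extended action (which identifies $\Unit m {\unit{s_0}} \act \Sht U$ with the plain action $\unit{s_0} \act \Sht U$, equal to $\Sht U$ by the unit clause of bimodularity), so your planned detour through the cartesian coherence of $\cnx$ is sound but merely re-derives the representability lemma.
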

\begin{proof} We proceed by induction on $n$.

If $n = 0$, the first identity is obtained as follows (with $U : x \cto x'$, $V : y \cto y'$ and $W : z \cto z'$):
\begin{align*}
\Sht{(U \Comp 0 V) \Comp 0 W}
&= (x \Comp 0 y) \act \Sht W \comp \Sht{U \Comp 0 V} \act z'
\tag{definition of $\Comp 0$} \\
&= x \act y \act \Sht W \comp (x \act \Sht V \comp \Sht U \act y') \act z'
\tag{definition of $\Comp 0$} \\
&= x \act y \act \Sht W \comp x \act \Sht V \act z' \comp \Sht U \act y' \act z'
\tag{distributivity over $\comp$} \\
&= x \act (y \act \Sht W \comp \Sht V \act z') \comp \Sht U \act y' \act z'
\tag{distributivity over $\comp$} \\
&= x \act \Sht{V \comp W} \comp \Sht U \act (y' \Comp 0 z')
\tag{definition of $\Comp 0$} \\
&= \Sht{U \Comp 0 (V \Comp 0 W)}.
\tag{definition of $\Comp 0$}
\end{align*}
The second identity is obtained as follows (with $U : x \cto y$ and $S : \Sce x \cto \Sce y$), using distributivity over $\Triv$:
\[
\Sht{\Unit m S \Comp 0 U} = \Unit m {\Sce x} \act \Sht U \comp \Sht{\Unit m S} \act y = \unit{\Sce x} \act \Sht U \comp \Triv \Sht{\Unit m {\Pal S}} \act y = \Sht U \comp \Triv \Sht{\Unit m {\Pal S} \Comp 0 y} = \Sht U,
\]
and similarly for the third one.

If $n > 0$, we apply the induction hypothesis.
\end{proof}

\begin{lemma} (compatibility of $\tau$ with compositions and units)
The following identities hold for any $m$-cells $u \Cons n v$ and for any $n$-cell $x$:
\[
\Triv(u \Comp n v) = \Triv \, u \Comp n \Triv \, v, \qquad
\Triv \, \Unit m x = \Unit m {\Triv \, x}.
\]
\end{lemma}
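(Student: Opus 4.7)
The plan is to prove both identities by induction on the recursive structure of $\Triv$, $\Unit m{}$ and $\Comp n$, using only the definitions and the earlier lemmas of the appendix (in particular the shift axioms, distributivity, and compatibility of multiplication with $\Triv$).

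\textbf{Unit identity.} I would prove $\Triv\Unit m x = \Unit m\Triv\, x$ (with $x$ an $n$-cell, $m \geq n$) by induction on $m$. When $m = n$, both sides equal $\Triv\, x$ by the convention $\Unit n x = x$ and $\Unit n U = U$. For $m > n$, I compare edge data and shift. The trivial cylinder of $\Unit m x$ has $\Sce = \unit{\Sce\Unit m x} = \unit{\Sce x}$ (with the convention $\Sce x = x$ when $n=0$), $\Tge = \unit{\Tge x}$, and $\Sht = \Triv\Sht\Unit m x$. The cylindrical unit $\Unit m\Triv\, x$ has, when $n = 0$, $\Sce = \Pal\Triv\, x = \unit x$ and $\Sht = \Triv\Sht\Unit m{\unit x}$; when $n > 0$, $\Sce = \Sce\Triv\, x = \unit{\Sce x}$ and $\Sht = \Unit{m-1}\Sht\Triv\, x = \Unit{m-1}\Triv\Sht x$. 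Using the shift axiom $\Sht\Unit m x = \Unit{m-1}\Sht x$ (from Section 3.3) in $\HOM{\Sce x}{\Tge x}$, the $\Sht$ of the left-hand side becomes $\Triv\Unit{m-1}\Sht x$, which equals $\Unit{m-1}\Triv\Sht x$ by the induction hypothesis applied in that hom $\omega$-category. The $n=0$ subcase is handled similarly via the collapsed shift.

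\textbf{Composition identity.} I would then prove $\Triv(u \Comp n v) = \Triv\, u \Comp n \Triv\, v$ for composable $m$-cells $u \Cons n v$ by induction on $n$. The edges agree since $\Sce\Triv\, u = \unit{\Sce u} = \unit{\Sce(u \Comp n v)}$ and dually for the target.

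For the base case $n = 0$, by the definition of $\Comp 0$ on cylinders,
\[
\Sht{\Triv\, u \Comp 0 \Triv\, v} \;=\; u \act \Sht\Triv\, v \;\comp\; \Sht\Triv\, u \act v \;=\; u \act \Triv\Sht v \;\comp\; \Triv\Sht u \act v.
\]
By the extended action $w \act W = \Triv\Sht w \COMP W$ and compatibility of multiplication with $\Triv$, each factor equals $\Triv\Sht u \COMP \Triv\Sht v = \Triv\Sht{u \Comp 0 v}$. Finally, concatenating this trivial cylinder with itself gives back $\Triv\Sht{u \Comp 0 v}$ via $\Triv\, a \comp \Triv\, a = \Triv\, a$ (the unit law for $\comp$), which matches $\Sht\Triv(u \Comp 0 v)$.

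For the inductive step $n > 0$, the definition of $\Comp n$ on cylinders yields
\[
\Sht{\Triv\, u \Comp n \Triv\, v} \;=\; \Triv\Sht u \Comp{n-1} \Triv\Sht v,
\]
while $\Sht\Triv(u \Comp n v) = \Triv\Sht(u \Comp n v) = \Triv(\Sht u \Comp{n-1}\Sht v)$ by the shift axiom $\Sht(u \Comp n v) = \Sht u \Comp{n-1}\Sht v$. These agree by the induction hypothesis in $\HOM{\Sce u}{\Tge u}$.

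The main obstacle is the $n = 0$ base case of the composition identity, where the two summands of the shift must both collapse to the same trivial cylinder via distributivity and the multiplication–$\Triv$ compatibility; once this is seen, everything else is routine bookkeeping with the recursive definitions.
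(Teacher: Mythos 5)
Your proof is correct and follows essentially the same route as the paper: induction that descends through the shift, with the $n=0$ base case of the composition identity resolved by reducing both concatenands to $\Triv \Sht{u \Comp 0 v}$ (your unfolding of the extended action via $\Triv \Sht u \COMP \Triv \Sht v$ is exactly how the paper's ``distributivity over $\Triv$'' is itself proved) and then absorbing the self-concatenation by the unit law for $\comp$. The only cosmetic difference is that you phrase the unit identity as an induction on $m$ rather than on $n$, but since passing to the shift lowers both indices the two inductions coincide.
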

\begin{proof} By induction on $n$.

If $n = 0$, the first identity is obtained as follows, using distributivity over $\Triv$:
\[
\Sht{\Triv(u \Comp 0 v)}
= \Triv{\Sht{u \Comp 0 v}}
= \Triv{\Sht{u \Comp 0 v}} \comp \Triv{\Sht{u \Comp 0 v}}
= u \act \Triv \Sht v \comp \Triv \Sht u \act v
= u \act \Sht{\Triv \, v} \comp \Sht{\Triv \, u} \act v
= \Sht{\Triv \, u \Comp 0 \Triv \, v}.
\]
The second identity is obtained as follows:
\[
\Sht{\Triv \, \Unit m x}
= \Triv \Sht{\Unit m x}
= \Triv \Sht{\Unit m {\unit x}}
= \Triv \Sht{\Unit m {\Pal{(\Triv \, x)}}}
= \Sht{\Unit m {\Triv \, x}}.
\]
If $n > 0$, we apply the induction hypothesis.
\end{proof}

\begin{lemma} (compatibility of $\,\Cnx f$ with compositions and units)
The following identities hold any $\omega$-functor $f : X \to Y$:
\begin{itemizeminus}
\item $f (U \Comp n V) = f \, U \Comp n f \, V$ for any $m$-cylinders $U \Cons n V$ in $X$;
\item $f \, \Unit m U = \Unit m {f \, U}$ for any $n$-cylinder $U$ in $X$.
\end{itemizeminus}
\end{lemma}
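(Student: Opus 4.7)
The plan is to prove both identities simultaneously by induction on $n$, relying on the already-established compatibilities of $\Cnx f$ with concatenation, with $\Triv$, and with the cartesian-product/multiplication structure, together with the fact that $f$ itself, being an $\omega$-functor, preserves compositions and units of cells.

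First, I would dispatch the base case $n = 0$ for composition. Given $m$-cylinders $U : x \cto x'$ and $V : y \cto y'$ with $U \Cons 0 V$, the source and target components of $U \Comp 0 V$ are $\Sce U$ and $\Tge V$, which $f$ preserves by naturality of $\Top$ and $\Bot$ (equivalently, by the functorial extension of $f$ to cylinders). For the $\Sht$-component, the definition gives $\Sht{U \Comp 0 V} = x \act \Sht V \comp \Sht U \act y'$. Applying $f$ and using compatibility of $\Cnx f$ with concatenation already established, and the fact that the (extended) actions $u \act W$ and $W \act v$ are defined via multiplication against $\Triv \Sht u$ resp.\ $\Triv \Sht v$ (so $f$ transports them because it is compatible with multiplication via Lemma~\ref{lemma:cartesian} and with $\Triv$), we obtain exactly $\Sht{f\,U \Comp 0 f\,V}$.

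Second, for the base case $n = 0$ of units, let $U : x \cto y$ be a $0$-cylinder. By definition $\Sce{(\Unit m U)} = \Tge{(\Unit m U)} = \Pal U$ and $\Sht{\Unit m U} = \Triv \Sht{\Unit m {\Pal U}}$. Applying $f$, naturality of $\Triv$ and the fact that $f$ preserves the cell-level unit $\Unit m {\Pal U}$ yield $f\,\Sht{\Unit m U} = \Triv \Sht{\Unit m {f\,\Pal U}} = \Triv \Sht{\Unit m {\Pal{f\,U}}} = \Sht{\Unit m {f\,U}}$, while the source/target components are $\Pal{f\,U}$ on the nose.

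For the inductive step $n > 0$, both operations are defined levelwise: we have $\Sht{U \Comp n V} = \Sht U \Comp{n-1} \Sht V$ and $\Sht{\Unit m U} = \Unit{m-1}{\Sht U}$, while the source and target $1$-cells are unchanged (for composition) or come from $\Sce U, \Tge U$ (for units). Since $\Sht U$ lives in the $\omega$-category $\HOM{\Sce x}{\Tge y}$ and the shift $\Sht{(-)}$ commutes with $f$ in the obvious sense, the induction hypothesis applied to $\Sht U$ and $\Sht V$ at level $n-1$ yields the required identities; the source and target components are preserved because $f$ is an $\omega$-functor. The one mildly delicate point to keep straight throughout is the base case for composition, where one must invoke the representability lemma to reduce the extended action to multiplication and $\Triv$; once that reduction is made, all remaining verifications are immediate from the previously established compatibilities, so no genuine obstacle arises.
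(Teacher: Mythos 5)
Your proof is correct: the paper actually states this lemma without proof, treating it as one of the routine verifications of the appendix, and your induction on $n$ — unfolding the definitions of $\Comp n$ and $\Unit m$, reducing the base case to the already-established compatibilities of $\Cnx f$ with concatenation, $\Triv$ and multiplication (via the extended action), and passing to $\Sht{(-)}$ for the inductive step — is exactly the argument the paper's surrounding lemmas (e.g.\ compatibility of multiplication and of $\Triv$ with compositions and units) are proved by. The only nitpick is that preservation of $\Sce U$ and $\Tge V$ is by definition of the functorial extension of $f$ to cylinders rather than by naturality of $\Top$ and $\Bot$, as you yourself note parenthetically.
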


In the cases of precomposition and postcomposition, we get the following result:

\begin{lemma} (distributivity over compositions and units)
The following identities for any 0-cells $x, y, z$ and for any 1-cell $u : x \to y$:
\begin{itemizeminus}
\item $u \act (V \Comp n W) = u \act V \Comp n u \act W$ for any $m$-cylinders $V \Cons n W$ in $\HOM y z$;
\item $u \act \Unit m V = \Unit m {u \act V}$ for any $n$-cylinder $V$ in $\HOM y z$.
\end{itemizeminus}
There are similar properties for right action.
\end{lemma}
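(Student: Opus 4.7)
The plan is to derive this lemma as an immediate consequence of the preceding compatibility lemma for $\Cnx f$ with compositions and units, applied to the $\omega$-functors induced by pre- and post-composition with a $1$-cell.

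For any $1$-cell $u : x \to y$, the shift construction of Section~\ref{subsec:shift} provides an $\omega$-functor $u \act - : \HOM y z \to \HOM x z$, characterised on generators by $u \act \Sht w = \Sht{u \Comp 0 w}$. The definition of left action on cylinders (given at the beginning of Section~\ref{annex:connect} as ``precomposition and postcomposition extend to cylinders'') is precisely the extension of this $\omega$-functor to cylinders via Lemma~\ref{lemma:functoriality}; in other words $u \act V = \Cnx(u \act -)(V)$ for every cylinder $V$ in $\HOM y z$.

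Applying the preceding compatibility lemma with $f = u \act -$ then immediately yields, for any $m$-cylinders $V \Cons n W$ and any $n$-cylinder $V$ in $\HOM y z$,
\[
u \act (V \Comp n W) = (u \act V) \Comp n (u \act W), \qquad u \act \Unit m V = \Unit m (u \act V),
\]
which are exactly the required identities. The symmetric argument, applied to the $\omega$-functor $- \act v : \HOM x y \to \HOM x z$ induced by any $1$-cell $v : y \to z$, delivers the analogous statements for right action.

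I do not expect any significant obstacle here: the only subtle point is the identification of the cylinder-level action $u \act V$ with the functorial extension $\Cnx(u \act -)(V)$, which is built into the very definition of left and right action on cylinders. Once that identification is made explicit, the statement reduces verbatim to the preceding compatibility lemma.
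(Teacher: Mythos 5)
Your proposal is correct and matches the paper's intended argument: the lemma is stated immediately after the compatibility of $\Cnx f$ with compositions and units, prefaced by ``in the cases of precomposition and postcomposition,'' i.e.\ it is exactly the specialisation of that lemma to the $\omega$-functors $u \act {-}$ and ${-} \act v$, whose extension to cylinders is by definition the action. You correctly identify the one point worth making explicit, namely that $u \act V = \Cnx(u \act {-})(V)$ by the very definition of the action on cylinders.
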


\begin{lemma} (compatibility of concatenation with composition and units)
The following identities hold for any $m$-cylinders $U \Cons n V$ and $U' \Cons n V'$ such that $U \cons U'$ and $V \cons V'$, and for any $n$-cylinders $S \cons T$:
\[
(U \Comp n V) \comp (U' \Comp n V') = (U \comp U') \Comp n (V \comp V'), \qquad
\Unit m S \comp \Unit m T = \Unit m {S \comp T}.
\]
\end{lemma}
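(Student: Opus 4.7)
My plan is to prove both identities by induction on $n$, treating the first identity in detail since the second is analogous but simpler. For each identity, direct inspection of the definitions of $\comp$, $\Comp n$ and $\Unit m$ shows that the two sides have the same $0$-source and $0$-target, so only the shifts $\Sht{\cdot}$ need to be compared, and these live in a hom $\omega$-category where the preceding structural lemmas apply.

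For the base case $n = 0$ of the first identity, I would expand the left-hand shift as $\Sht{(U \Comp 0 V) \comp (U' \Comp 0 V')} = \Sht{U \Comp 0 V} \act \Tge{V'} \comp \Sce U \act \Sht{U' \Comp 0 V'}$, then substitute the definition $\Sht{U \Comp 0 V} = \Top\, U \act \Sht V \comp \Sht U \act \Bot\, V$ and its analogue for $U', V'$. Using extended distributivity of action over $\comp$ together with extended bimodularity produces a fourfold concatenation. A parallel expansion of the right-hand side, via the concatenation formula applied to $U \comp U'$ and $V \comp V'$, produces another fourfold concatenation. The first and last terms on the two sides coincide by bimodularity. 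The two middle terms appear in swapped order on the two sides; their equality is precisely the content of the commutation lemma applied to the $(m{-}1)$-cylinders $\Sht U$ and $\Sht V$ together with the surrounding cells $\Tge{U'}$ and $\Sce V$. For the unit identity at $n = 0$, I would use the defining equation $\Sht{\Unit m W} = \Triv \Sht{\Unit m{\Pal W}}$ valid for $0$-cylinders $W$, combined with compatibility of $\Triv$ with composition and distributivity of action over $\Triv$; both sides then collapse to $\Triv \Sht{\Unit m {\Pal S \Comp 0 \Pal T}}$.

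For the inductive step $n > 0$, both operations unfold componentwise on shifts via $\Sht{U \Comp n V} = \Sht U \Comp{n-1} \Sht V$ and $\Sht{\Unit m U} = \Unit{m-1}{\Sht U}$. The two identities therefore reduce immediately to the induction hypothesis applied to the $(m{-}1)$-cylinders in the relevant hom $\omega$-category.

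The main obstacle is the middle-term identification in the base case of the first identity. Geometrically it says that the two different routes around the rectangle of cylinders $U, U', V, V'$ yield equal composite $(m{-}1)$-cylinders; algebraically it is a direct application of the commutation lemma. Once that step is in hand, the remainder of the proof is routine bookkeeping with distributivity, bimodularity and the associativity of concatenation.
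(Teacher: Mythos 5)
Your outline follows the paper's proof in all essentials: induction on $n$; in the base case $n=0$ both sides are expanded into fourfold concatenations using the definitions of $\comp$ and $\Comp 0$ together with distributivity of the action over $\comp$, the outer terms match on the nose, and the two middle terms are identified by the commutation lemma (using $\Tge U = \Sce V$ and $\Tge{U'} = \Sce{V'}$); the unit identity collapses to $\Triv \Sht{\Unit m {\Pal S \Comp 0 \Pal T}}$ via distributivity over $\Triv$. One small slip: the commutation is between $\Sht U$ and $\Sht{V'}$ (the two ``non-degenerate'' middle factors), not between $\Sht U$ and $\Sht V$.

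The one place where you are genuinely too quick is the inductive step. It is true that $\Comp n$ and $\Unit m$ unfold componentwise on shifts, but the outer operation in $(U \Comp n V) \comp (U' \Comp n V')$ is the concatenation, and its shift is $\Sht{A \comp A'} = \Sht A \act \Tge{A'} \comp \Sce A \act \Sht{A'}$, which still carries left and right actions. After unfolding, the left-hand side reads $(\Sht U \Comp{n{-}1} \Sht V)\act\Tge{U'} \comp \Sce U\act(\Sht{U'}\Comp{n{-}1}\Sht{V'})$, and before the induction hypothesis can be applied you must distribute the actions over $\Comp{n{-}1}$ and use $\Sce U = \Sce V$ and $\Tge{U'} = \Tge{V'}$ so that the four resulting pieces regroup as $(\Sht U \act \Tge{U'} \comp \Sce U \act \Sht{U'}) \Comp{n{-}1} (\Sht V \act \Tge{U'} \comp \Sce U \act \Sht{V'})$. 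So the reduction is not ``immediate''; it requires the distributivity-over-compositions lemma, exactly as in the paper's displayed computation. Since these are tools you already invoke in the base case, the omission is one of bookkeeping rather than of ideas, but as written that step of your argument would not go through.
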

\begin{proof}
We proceed by induction on $n$.

If $n = 0$, the first identity is obtained as follows (with $U : x \cto x'$, $U' : x' \cto x''$, $V : y \cto y'$ and $V' : y' \cto y''$):
\begin{align*}
\Sht{(U \Comp 0 V) \comp (U' \Comp 0 V')}
&= \Sht{U \Comp 0 V} \act \Tge{(U' \Comp 0 V')} \comp \Sce{(U \Comp 0 V)} \act \Sht{U' \Comp 0 V'}
\tag{definition of $\comp$} \\
&= (x \act \Sht V \comp \Sht U \act y') \act \Tge{V'} \comp \Sce U \act (x' \act \Sht{V'} \comp \Sht{U'} \act y'')
\tag{definition of $\Comp 0$} \\
&= x \act \Sht V \act \Tge{V'} \comp \Sht U \act y' \act \Tge{V'} \comp \Sce U \act x' \act \Sht{V'} \comp \Sce U \act \Sht{U'} \act y''
\tag{distributivity over $\comp$} \\
&= x \act \Sht V \act \Tge{V'} \comp x \act \Sce V \act \Sht{V'} \comp \Sht U \act \Tge{U'} \act y'' \comp \Sce U \act \Sht{U'} \act y''
\tag{commutation} \\
&= x \act (\Sht V \act \Tge{V'} \comp \Sce V \act \Sht{V'}) \comp (\Sht U \act \Tge{U'} \comp \Sce U \act \Sht{U'}) \act y''
\tag{distributivity over $\comp$} \\
&= x \act \Sht{V \comp V'} \comp \Sht{U \comp U'} \act y''
\tag{definition of $\comp$} \\
&= \Sht{(U \comp U') \Comp 0 (V \comp V')}.
\tag{definition of $\Comp 0$}
\end{align*}
In the commutation step, we use the fact that $\Tge U = \Sce V$ and $\Tge{U'} = \Sce{V'}$ since $U \Cons 0 V$ and $U' \Cons 0 V'$.

The second identity is obtained as follows, using distributivity over $\Triv$:
\[
\begin{array}{c}
\Sht{\Unit m S \comp \Unit m T}
= \Sht{\Unit m S} \act \Tge{(\Unit m T)} \comp \Sce{(\Unit m S)} \act \Sht{\Unit m T}
= \Triv \Sht{\Unit m {\Pal S}} \act \Pal T \comp \Pal S \act \Triv \Sht{\Unit m {\Pal T}}
= \vspace{1ex} \\
\Triv \Sht{\Unit m {\Pal S \Comp 0 \Pal T}} \comp \Triv \Sht{\Unit m {\Pal S \Comp 0 \Pal T}}
= \Triv \Sht{\Unit m {\Pal S \Comp 0 \Pal T}}
= \Triv \Sht{\Unit m {\Pal{(S \comp T)}}}
= \Sht{\Unit m {S \comp T}}.
\end{array}
\]
If $n > 0$, the first identity is obtained as follows:
\begin{align*}
\Sht{(U \Comp n V) \comp (U' \Comp n V')}
&= \Sht{U \Comp n V} \act \Tge{(U' \Comp n V')} \comp \Sce{(U \Comp n V)} \act \Sht{U' \Comp n V'}
\tag{definition of $\comp$} \\
&= (\Sht U \Comp{n{-}1} \Sht V) \act \Tge{U'} \comp \Sce U \act (\Sht{U'} \Comp{n{-}1} \Sht{V'})
\tag{definition of $\Comp n$} \\
&= (\Sht U \act \Tge{U'} \Comp{n{-}1} \Sht V \act \Tge{U'}) \comp (\Sce U \act \Sht{U'} \Comp{n{-}1} \Sce U \act \Sht{V'})
\tag{distributivity over $\Comp{n{-}1}$} \\
&= (\Sht U \act \Tge{U'} \comp \Sce U \act \Sht{U'}) \Comp{n{-}1} (\Sht V \act \Tge{U'} \comp \Sce U \act \Sht{V'}) \tag{induction hypothesis} \\
&= \Sht{U \comp U'} \Comp{n{-}1} \Sht{V \comp V'}
\tag{definition of $\comp$} \\
&= \Sht{(U \comp U') \Comp n (V \comp V')}.
\tag{definition of $\Comp n$}
\end{align*}
In the penultimate step, we use the fact that $\Sce U = \Sce V$ and $\Tge{U'} = \Tge{V'}$ since $U \Cons n V$ and $U' \Cons n V'$.

The second identity is obtained as follows, using distributivity over units and the induction hypothesis:
\[
\begin{array}{c}
\Sht{\Unit m S \comp \Unit m T}
= \Sht{\Unit m S} \act \Tge{(\Unit m T)} \comp \Sce{(\Unit m S)} \act \Sht{\Unit m T}
= \Unit {m{-}1} {\Sht S} \act \Tge T \comp \Sce S \act \Unit {m{-}1} {\Sht T}
= \vspace{1ex} \\
\Unit {m{-}1} {\Sht S \act \Tge T} \comp  \Unit {m{-}1} {\Sce S \act \Sht T}
= \Unit {m{-}1} {\Sht S \act \Tge T \comp \Sce S \act \Sht T}
= \Unit {m{-}1} {\Sht{S \comp T}}
= \Sht{\Unit m {S \comp T}}.
\end{array}
\]
\end{proof}

\begin{rem}
In $\Cnx{X \times Y} \simeq \Cnx X \times \Cnx Y$, compositions and units can be defined componentwise.
\end{rem}

Using compatibility of $\,\Cnx f$ with compositions and units, we get the following result:

\begin{lemma} (compatibility of multiplication with compositions and units)
The following identities hold for any 0-cells $x, y, z$, for any $m$-cylinders $U \Cons n U'$ in $\HOM x y$ and $V \Cons n V'$ in $\HOM y z$, and for any $n$-cylinders $S$ in $\HOM x y$ and $T$ in $\HOM y z$:
\[
(U \Comp n U') \COMP (V \Comp n V') = (U \COMP V) \Comp n (U' \COMP V'), \qquad
\Unit m S \COMP \Unit m T = \Unit m {S \COMP T}.
\]
\end{lemma}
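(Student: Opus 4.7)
The plan is to reduce this to the already-established compatibility of $\Cnx f$ with compositions and units, applied to the $\omega$-bifunctor that represents $n$-composition. Recall that for any $0$-cells $x,y,z$, the multiplication ${-} \COMP {-} : \HOM x y \times \HOM y z \to \HOM x z$ was introduced (at the end of Section~\ref{subsec:shift}) exactly as the composition $\omega$-bifunctor $\Sht u \COMP \Sht v = \Sht{u \Comp 0 v}$, and its extension to cylinders was obtained functorially via the remark following Lemma~\ref{lemma:cartesian}, by transporting $\Cnx({-} \COMP {-})$ along the isomorphism $\Cnx{\HOM x y \times \HOM y z} \simeq \Cnx{\HOM x y} \times \Cnx{\HOM y z}$.

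First I would invoke the immediately preceding remark, which asserts that compositions and units in $\Cnx{X \times Y} \simeq \Cnx X \times \Cnx Y$ may be defined componentwise. Concretely, under this isomorphism the pair $(U \Comp n U', V \Comp n V')$ corresponds to the cylinder $(U,V) \Comp n (U',V')$ in $\Cnx{\HOM x y \times \HOM y z}$, and similarly $(\Unit m S, \Unit m T)$ corresponds to $\Unit m {(S,T)}$. Thus the two identities to be proved become, after transport along the isomorphism, simply
\[
\Cnx({-}\COMP{-}) \bigl((U,V) \Comp n (U',V')\bigr) = \Cnx({-}\COMP{-})(U,V) \Comp n \Cnx({-}\COMP{-})(U',V')
\]
and the analogous equation for units.

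Then I would apply the previous lemma, which states that $\Cnx f$ is compatible with $\Comp n$ and $\Unit m$ for any $\omega$-functor $f : X \to Y$, specialised to the $\omega$-functor $f = ({-} \COMP {-})$ regarded as a morphism $\HOM x y \times \HOM y z \to \HOM x z$. This gives exactly the two desired equalities.

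No step should present real obstacles, since the work was already done in proving compatibility of $\Cnx f$ with compositions and units (the previous lemma) and in establishing that products of $\omega$-categories are sent to products of globular sets by $\cnx$ (Lemma~\ref{lemma:cartesian}), together with the componentwise description of compositions and units in such products. The only mild care required is to keep track of the coherence with the canonical isomorphism $\Cnx{X \times Y} \simeq \Cnx X \times \Cnx Y$; this is precisely what the coherence square in Lemma~\ref{lemma:cartesian} ensures.
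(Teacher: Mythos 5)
Your proposal is correct and matches the paper's own argument: the lemma is stated there immediately after the remark that compositions and units in $\Cnx{X \times Y} \simeq \Cnx X \times \Cnx Y$ are componentwise, with the one-line justification ``using compatibility of $\Cnx f$ with compositions and units,'' applied to the composition $\omega$-bifunctor ${-}\COMP{-}$. You have simply spelled out the same reduction in more detail, including the coherence bookkeeping the paper leaves implicit.
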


\begin{lemma} (compatibility of action with compositions and units)
The following identities hold for any 0-cells $x, y, z$, for any $m{+}1$-cells $u, u' : x \To 0 y$ such that $u \Cons{n{+}1} u'$, for any $m$-cylinders $V \Cons n V'$ in $\HOM y z$, for any $n{+}1$-cell $s : x \To 0 y$, and for any $n$-cylinder $T$ in $\HOM y z$:
\[
(u \Comp{n{+}1} u') \act (V \Comp n V') = u \act V \Comp n u' \act V', \qquad
\Unit {m{+}1} s \act \Unit m T = \Unit m {s \act T}.
\]
There are similar properties for right action.
\end{lemma}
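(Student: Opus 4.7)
The plan is to reduce both identities to already-established compatibility properties, via the defining equation of the extended action $u \act V = \Triv \Sht u \COMP V$. This presents left action as a multiplication by a trivial cylinder on the shift of the cell, so the desired compatibilities follow by chaining together the corresponding properties for the multiplication $\COMP$, for $\Triv$, and for the shift construction $\Sht{-}$.

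For the first identity, I would unfold
\[
(u \Comp{n{+}1} u') \act (V \Comp n V') = \Triv \Sht{u \Comp{n{+}1} u'} \COMP (V \Comp n V'),
\]
use the shift identity $\Sht{u \Comp{n{+}1} u'} = \Sht u \Comp n \Sht{u'}$ (from the definition of composition in $\HOM x y$), apply compatibility of $\Triv$ with compositions to rewrite the left factor as $\Triv \Sht u \Comp n \Triv \Sht{u'}$, and then invoke compatibility of multiplication with compositions to obtain $(\Triv \Sht u \COMP V) \Comp n (\Triv \Sht{u'} \COMP V')$, which is $u \act V \Comp n u' \act V'$ by the definition of action.

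The second identity follows by the same pattern: $\Unit{m{+}1} s \act \Unit m T = \Triv \Sht{\Unit{m{+}1} s} \COMP \Unit m T = \Triv \Unit m \Sht s \COMP \Unit m T$ using the shift identity $\Sht{\Unit{m{+}1} u} = \Unit m \Sht u$, then compatibility of $\Triv$ with units gives $\Unit m \Triv \Sht s \COMP \Unit m T$, and compatibility of multiplication with units produces $\Unit m (\Triv \Sht s \COMP T) = \Unit m (s \act T)$. The right-action case is treated symmetrically, starting from the dual definition $U \act v = U \COMP \Triv \Sht v$.

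No substantive obstacle arises: the lemma is a bookkeeping consequence of the preceding compatibility results, assembled by a straightforward unfolding of the extended action as multiplication by a trivial cylinder. All the genuine content has already been carried out in the earlier compatibility lemmas of the appendix.
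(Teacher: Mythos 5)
Your proof is correct and follows exactly the same route as the paper: unfold the extended action as $\Triv \Sht{u} \COMP {-}$, use the shift identities for composition and units, then apply compatibility of $\Triv$ and of the multiplication $\COMP$ with compositions and units (the ``previous lemma'' in the paper). Nothing to add.
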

\begin{proof} The first identity is obtained as follows, using compatibility of $\Triv$ with compositions and the previous lemma:
\[
\begin{array}{c}
(u \Comp{n{+}1} u') \act (V \Comp n V')
= \Triv \Sht{u \Comp{n{+}1} u'} \COMP (V \Comp n V')
= \Triv (\Sht u \Comp n \Sht{u'}) \COMP (V \Comp n V')
= \vspace{1ex} \\
(\Triv \Sht u \Comp n \Triv \Sht{u'}) \COMP (V \Comp n V')
= (\Triv \Sht u \COMP V) \Comp n (\Triv \Sht{u'} \COMP V')
= u \act V \Comp n u' \act V'.
\end{array}
\]
The second identity is obtained as follows, using compatibility of $\Triv$ with units and the previous lemma:
\[
\Unit {m{+}1} s \act \Unit m  T
= \Triv \Sht{\Unit {m{+}1} s} \COMP \Unit m  T
= \Triv \, \Unit m {\Sht s} \COMP \Unit m  T
= \Unit m {\Triv {\Sht s}} \COMP \Unit m  T
= \Unit m {\Triv {\Sht s} \COMP  T}
= \Unit m {s \act  T}.
\]
\end{proof}

Now we assume that $m > n > p$.

\begin{lemma} (interchange)
The following identities hold for any $m$-cylinders $U \Cons n U'$ and $V \Cons n V'$ such that $U \Cons p V$ (so that $U' \Cons p V'$), for any $n$-cylinders $S \Cons p T$, and for any $p$-cylinder $R$:
\[
(U \Comp n U') \Comp p (V \Comp n V') = (U \Comp p V) \Comp n (U' \Comp p V'), \qquad
\Unit m S \Comp p \Unit m T = \Unit m {S \Comp p T}, \qquad
\Unit m {\Unit n R} = \Unit m R.
\]
\end{lemma}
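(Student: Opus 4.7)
The plan is to prove all three identities by simultaneous induction on $p$, mirroring the structure of the compatibility lemmas already established in this appendix. Throughout, the sources and targets of both sides of each identity agree by direct inspection of the definitions, so the real content lies in the $\Sht$ components.

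For the third identity $\Unit m{\Unit n R}=\Unit m R$, the case $\dim R=0$ unfolds via $\Sht{\Unit k U}=\Triv\Sht{\Unit k{\Pal U}}$, the shift identity $\Sht{\Unit{k+1} u}=\Unit k{\Sht u}$, and compatibility of $\Triv$ with units, reducing the claim to the iterated-unit axiom $\Unit{m-1}{\Unit{n-1} z}=\Unit{m-1} z$ of the ambient $\omega$-category. When $\dim R>0$, both sides' shifts become $\Unit{m-1}{\Unit{n-1}{\Sht R}}$ and $\Unit{m-1}{\Sht R}$, so induction applies directly.

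For the second identity $\Unit m S\Comp p\Unit m T=\Unit m{S\Comp p T}$, the case $p>0$ reduces via $\Sht{\Unit k U}=\Unit{k-1}{\Sht U}$ and $\Sht{U\Comp k V}=\Sht U\Comp{k-1}\Sht V$ to the same identity one level lower in the shifted $\omega$-category, yielded by the induction hypothesis. The base case $p=0$ expands $\Sht{A\Comp 0 B}=a\act\Sht B\comp\Sht A\act b'$ and then folds the result back into $\Sht{\Unit m{S\Comp 0 T}}$ using distributivity of action over units, compatibility of concatenation with units, and compatibility of $\Triv$ with compositions.

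For the first identity, the interchange law proper, the case $p>0$ again reduces, by applying $\Sht$ to both sides, to the interchange identity at level $p-1$ in the shifted $\omega$-category, handled by the induction hypothesis. The main obstacle is the base case $p=0$ with $n\geq 1$, where one must match the $\Sht$'s of the two sides: the left-hand side expands via $\Sht{A\Comp 0 B}=a\act\Sht B\comp\Sht A\act b'$ followed by $\Sht{U\Comp n U'}=\Sht U\Comp{n-1}\Sht{U'}$, while the right-hand side expands via $\Sht{A\Comp n B}=\Sht A\Comp{n-1}\Sht B$ followed again by the definition of $\Comp 0$. The two expanded expressions are then equated by repeated application of extended distributivity of action over $\Comp{n-1}$, compatibility of concatenation with $\Comp{n-1}$, extended bimodularity, and the commutation identity $U\act v\comp u'\act V=u\act V\comp U\act v'$. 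This rearrangement is lengthy but entirely of the same flavour as the proof of the compatibility of concatenation with composition above, and requires no genuinely new idea.
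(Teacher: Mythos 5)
Your proposal is correct and follows essentially the same route as the paper: induction on $p$, with the case $p>0$ handled by applying $\Sht$ and the induction hypothesis, and the base case $p=0$ handled by unfolding the definitions of $\Comp 0$ and $\Comp n$ on the $\Sht$ components and invoking the previously established compatibility lemmas (compatibility of action and of concatenation with $\Comp{n-1}$ for the first identity, compatibility of action and concatenation with units for the second, and compatibility of $\Triv$ with units for the third, which reduces to the iterated-unit axiom applied to $\Pal R$). The only cosmetic difference is that you list a few extra tools (commutation, extended bimodularity) that the paper's computation does not actually need at this stage, since they are already absorbed into the compatibility-of-concatenation-with-composition lemma.
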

\begin{proof} We proceed by induction on $p$.

If $p = 0$, the first identity is obtained as follows (with $U : x \cto y$, $U' : x' \cto y'$, $V : z \cto t$ and $V' : z' \cto t'$):
\begin{align*}
\Sht{(U \Comp n U') \Comp 0 (V \Comp n V')}
&= (x \Comp n x') \act \Sht{V \Comp n V'} \comp \Sht{U \Comp n U'} \act (t \Comp n t')
\tag{definition of $\Comp 0$} \\
&= (x \Comp n x') \act (\Sht V \Comp{n{-}1} \Sht{V'}) \comp (\Sht U \Comp{n{-}1} \Sht{U'}) \act (t \Comp n t')
\tag{definition of $\Comp n$} \\
&= (x \act \Sht V \Comp{n{-}1} x' \act \Sht{V'}) \comp (\Sht U \act t\Comp{n{-}1} \Sht{U'} \act t')
\tag{compatibility of $\act$ with $\Comp{n{-}1}$} \\
&= (x \act \Sht V \comp \Sht U \act t) \Comp{n{-}1} (x' \act \Sht{V'} \comp \Sht{U'} \act t')
\tag{compatibility of $\comp$ with $\Comp{n{-}1}$} \\
&= \Sht{U \Comp 0 V} \Comp{n{-}1} \Sht{U' \Comp 0 V'}
\tag{definition of $\Comp 0$} \\
&= \Sht{(U \Comp 0 V) \Comp n (U' \Comp 0 V')}.
\tag{definition of $\Comp n$}
\end{align*}
The second identity is obtained as follows (with $S : x \cto x'$ and $T : y \cto y'$), using compatibility of action and concatenation with units:
\[
\begin{array}{c}
\Sht{\Unit m S \Comp 0 \Unit m T}
= \Unit m x \act \Sht{\Unit m T} \comp \Sht{\Unit m S} \act \Unit m{y'}
= \Unit m x \act \Unit {m{-}1} {\Sht T} \comp \Unit {m{-}1} {\Sht S} \act \Unit m {y'}
= \vspace{1ex} \\
\Unit {m{-}1} {x \act \Sht T} \comp \Unit {m{-}1} {\Sht S \act y'}
= \Unit {m{-}1} {x \act \Sht T \comp \Sht S \act y'}
= \Unit {m{-}1} {\Sht{S \Comp 0 T}}
= \Sht{\Unit m {S \Comp 0 T}}.
\end{array}
\]
The third identity is obtained as follows, using compatibility of $\Triv$ with units:
\[
\Sht{\Unit m {\Unit n R}}
= \Unit {m{-}1} {\Sht{\Unit n R}}
= \Unit {m{-}1} {\Triv \Sht{\Unit n {\Pal R}}}
= \Triv \, \Unit {m{-}1} {\Sht{\Unit n {\Pal R}}}
= \Triv \Sht{\Unit m {\Unit n {\Pal R}}}
= \Triv \Sht{\Unit m {\Pal R}}
= \Sht{\Unit m R}.
\]
If $p > 0$, we apply the induction hypothesis.
\end{proof}



\begin{thebibliography}{10}

\bibitem{anick:homasa}
D.~Anick.
\newblock On the homology of associative algebras.
\newblock {\em Transactions of the American Mathematical Society},
  296:641--659, 1986.

\bibitem{batanin:mongcw}
M.~Batanin.
\newblock Monoidal globular categories as a natural environment for the theory
  of weak n-categories.
\newblock {\em Adv. Math.}, 136:39--103, 1998.

\bibitem{beke:shhomc}
T.~Beke.
\newblock Sheafifiable homotopy model categories.
\newblock {\em Math.Proc.Camb.Philos.Soc.}, 129(3):447--475, 2000.

\bibitem{beke:shhtwo}
T.~Beke.
\newblock Sheafifiable homotopy model categories.{II}.
\newblock {\em J.Pure Appl.Algebra}, 164(3):307--324, 2001.

\bibitem{burroni:higdwp}
A.~Burroni.
\newblock Higher-dimensional word problem.
\newblock In {\em Category theory and computer science}, number 530 in Lecture
  Notes in Computer Science, pages 94--105. Springer Verlag, 1991.

\bibitem{burroni:highdw}
A.~Burroni.
\newblock Higher-dimensional word problems with applications to equational
  logic.
\newblock {\em Theoretical Computer Science}, 115:43--62, 1993.

\bibitem{gabrielulmer:lokprc}
P.~Gabriel and F.~Ulmer.
\newblock {\em Lokal Pr\"asentierbare Kategorien}.
\newblock Springer, 1971.

\bibitem{gabrielzisman:calfrh}
P.~Gabriel and M.~Zisman.
\newblock {\em Calculus of Fractions and Homotopy Theory}.
\newblock Springer, 1967.

\bibitem{garner:undsoa}
R.~Garner.
\newblock Understanding the small object argument.
\newblock {\em Applied Categorical Structures}, 2008.
\newblock to appear.

\bibitem{guiraud:trdimp}
Y.~Guiraud.
\newblock The three dimensions of proofs.
\newblock {\em Annals of Pure and Applied Logic}, 141(1--2):266--295, 2006.

\bibitem{guiraud:twoppp}
Y.~Guiraud.
\newblock Two polygraphic presentations of {P}etri nets.
\newblock {\em Theoretical Computer Science}, 360(1--3):124--146, 2006.

\bibitem{hovey:modcat}
M.~Hovey.
\newblock {\em Model Categories}.
\newblock AMS, 1999.

\bibitem{joyaltierney:strscs}
A.~Joyal and M.~Tierney.
\newblock Strong stacks and classifying spaces.
\newblock In {\em Category Theory, Proc. Int. Conf., Como/Italy 1990}, number
  1488 in Lect. Notes Math., pages 213--236, 1991.

\bibitem{kobayashi:comrsh}
Y.~Kobayashi.
\newblock Complete rewriting systems and homology of monoid algebras.
\newblock {\em Journal of Pure and Applied Algebra}, 65(3):263--275, 1990.

\bibitem{lack:quitwo}
S.~Lack.
\newblock A {Q}uillen model structure for 2-categories.
\newblock {\em K-Theory}, 26(2):171--205, 2002.

\bibitem{lack:quibic}
S.~Lack.
\newblock A {Q}uillen model structure for bicategories.
\newblock {\em K-Theory}, 33(3):185--197, 2004.

\bibitem{lafont:alggrw}
Y.~Lafont.
\newblock Algebra and geometry of rewriting.
\newblock {\em Applied Categorical Structures}, 15(4):415--437, 2007.

\bibitem{lafontmetayer:polrhm}
Y.~Lafont and F.~M{\'e}tayer.
\newblock Polygraphic resolutions and homology of monoids.
\newblock {\em Journal of Pure and Applied Algebra}, 213:947--968, 2009.

\bibitem{metayer:respol}
F.~M{\'e}tayer.
\newblock Resolutions by polygraphs.
\newblock {\em Theory and Applications of Categories}, 11(7):148--184, 2003.
\newblock http://www.tac.mta.ca/tac/.

\bibitem{metayer:cofohc}
F.~M{\'e}tayer.
\newblock Cofibrant objects among higher-dimensional categories.
\newblock {\em Homology, Homotopy and Applications}, 10(1):181--203, 2008.
\newblock http://intlpress.com/HHA/v10/n1/a7/.

\bibitem{power:ncatpt}
J.~Power.
\newblock An $n$-categorical pasting theorem.
\newblock In {\em Category Theory, Proc. Int. Conf., Como/Italy 1990}, number
  1488 in Lect. Notes Math., pages 326--358, 1991.

\bibitem{quillen:homalg}
D.~Quillen.
\newblock {\em Homotopical Algebra}, volume~43 of {\em Lecture Notes in
  Mathematics}.
\newblock Springer Verlag, 1967.

\bibitem{rosickytholen:leftdm}
J.~Rosicky and W.~Tholen.
\newblock Left-determined model categories and universal homotopy theories.
\newblock {\em Trans. Amer. Math. Soc.}, 355(9):3611--3623, 2003.

\bibitem{squier:worphf}
C.~Squier.
\newblock Word problems and a homological finiteness condition for monoids.
\newblock {\em Journal of Pure and Applied Algebra}, 49:201--217, 1987.

\bibitem{squieral:fincrs}
C.~Squier, F.~Otto, and Y.~Kobayashi.
\newblock A finiteness condition for rewriting systems.
\newblock {\em Theoretical Computer Science}, 131:271--294, 1994.

\bibitem{street:limicf}
R.~Street.
\newblock Limits indexed by category-valued 2-functors.
\newblock {\em Journal of Pure and Applied Algebra}, 8:149--181, 1976.

\bibitem{worytkiewiczetal:modstc}
K.~Worytkiewicz, K.~Hess, P.-E. Parent, and A.~Tonks.
\newblock A model structure {\`a} la {T}homason on $2$-{C}at.
\newblock {\em Journal of Pure and Applied Algebra}, 208:205--236, 2007.

\end{thebibliography}
\end{document}